            \def\amsbb{\use@mathgroup \M@U \symAMSb}
	\renewcommand{\geq}{\geqslant}
	\renewcommand{\leq}{\leqslant}
	\renewcommand{\phi}{\varphi}
	\renewcommand{\Im}{\operatorname{Im}\nolimits}
	\renewcommand{\sp}{\mathrm{sp}}
	\providecommand{\corollaryname}{Corollary}
	\providecommand{\definitionname}{Definition}
        \providecommand{\definitionpropname}{Definition-Proposition}
	\providecommand{\examplename}{Example}
	\providecommand{\lemmaname}{Lemma}
	\providecommand{\notationname}{Notation}
	\providecommand{\propositionname}{Proposition}
	\providecommand{\remarkname}{Remark}
	\providecommand{\theoremname}{Theorem}
	\providecommand{\setupname}{Setup}
	\providecommand{\conjecturename}{Conjecture}
	\providecommand{\questionname}{Question}
	\providecommand{\claimname}{Claim}
	\theoremstyle{plain}
		\newtheorem{thm}{\protect\theoremname}[section] 
            \newtheorem*{thm*}{\protect\theoremname}
		\newtheorem{prop}[thm]{\protect\propositionname}
		\newtheorem{lem}[thm]{\protect\lemmaname}
	\theoremstyle{definition}
		\newtheorem{defn}[thm]{\protect\definitionname}
            \newtheorem{defnprop}[thm]{\protect\definitionpropname}
		\newtheorem{notation}[thm]{\protect\notationname}
		\newtheorem{example}[thm]{\protect\examplename}
		\newtheorem{setup}[thm]{\protect\setupname}
	\theoremstyle{remark}
		\newtheorem{rem}[thm]{\protect\remarkname}
	\numberwithin{figure}{section}
	\numberwithin{equation}{section}
		\newcommand\ackname{Acknowledgements}
		\newenvironment{acknowledgements}{%
			\medskip
			\bgroup
			\list{}{\labelwidth\z@
			\leftmargin3pc \rightmargin\leftmargin
			\listparindent\normalparindent \itemindent\z@
			\parsep\z@ \@plus\p@
			
				}%
				\Small
				\item[\hskip\labelsep\scshape\ackname.]%
			}{%
			\endlist\egroup
			}
\let\amph=& 
	\tikzset{commutative diagrams/.cd, 
		mysymbol/.style = {start anchor=center, end anchor = center, draw = none}}
	\newcommand{\BA}{\amsbb{A}}
	\newcommand{\BE}{\amsbb{E}}
    \newcommand{\cat}[1]{\mathcal{#1}}
    \newcommand{\fun}[1]{\mathsf{#1}}
    \newcommand{\adj}[1]{\mathscr{#1}}
    \newcommand{\yoneda}{\amsbb{Y}}
		\newcommand{\Ab}{\operatorname{\mathrm{Ab}}\nolimits}
		\newcommand{\op}{\mathrm{op}}
		\newcommand{\field}{\amsbb{K}}
		\newcommand{\sse}{\subseteq}
		\newcommand{\Iso}{\operatorname{Iso}\nolimits} 
		\newcommand{\Ker}{\operatorname{Ker}\nolimits}
		\newcommand{\into}{\hookrightarrow}
		\newcommand{\onto}{\rightarrow\mathrel{\mkern-14mu}\rightarrow}
		\newcommand{\im}{\operatorname{im}\nolimits}
		\newcommand{\id}[1]{\mathrm{id}_{#1}}
		\newcommand{\indxx}[1]{\operatorname{\mathrm{index}}_{#1}\nolimits}
		\NewDocumentCommand{\newindx}{O{} m}{\operatorname{\mathrm{ind}}_{#2}^{#1}\nolimits}
		\newcommand{\Ext}{\operatorname{Ext}\nolimits}
		\newcommand{\fs}{\mathfrak{s}}
		\newcommand{\sus}{\Sigma} 
            \newcommand{\bbsus}{\mathbb{\Sigma}}
		\newcommand{\rmod}[1]{\operatorname{\mathrm{mod}}\nolimits{#1}}
		\newcommand{\rMod}[1]{\operatorname{\mathrm{Mod}}\nolimits{#1}}
	\newcommand{\deff}{\coloneqq}
	\newcommand{\ol}[1]{\overline{#1}}
	\newcommand\restr[2]{{\left.\kern-\nulldelimiterspace#1
						\right|_{#2}}}
    \renewcommand{\andify}{%
		\nxandlist{\unskip, }{\unskip{} \@@and~}{\unskip \penalty-2 \space \@@and~}} 
    \renewcommand\author@andify{%
  		\nxandlist {\unskip ,\penalty-1 \space\ignorespaces}%
		{\unskip {} \@@and~}%
		{\unskip \penalty-2 \space \@@and~}}
    \let\oldtocsection=\tocsection
    \let\oldtocsubsection=\tocsubsection
    \renewcommand{\tocsection}[2]{\hspace{0em}\oldtocsection{#1}{#2}}
    \renewcommand{\tocsubsection}[2]{\hspace{2em}\oldtocsubsection{#1}{#2}}
\theoremstyle{plain}
\newtheorem{ThmIntro}{Theorem}
\theoremstyle{definition}
\newtheorem{DefIntro}[ThmIntro]{Definition}
\begin{document}

\title{The index with respect to a contravariantly finite subcategory}
\author[Fedele]{Francesca Fedele}
\address{
		School of Mathematics\\
		University of Leeds\\
		Leeds LS2 9JT\\
		UK
	}
    \email{f.fedele@leeds.ac.uk}
\author[J{\o{}}rgensen]{Peter J{\o{}}rgensen}
	\address{
		Department of Mathematics\\
		Aarhus University\\
		8000 Aarhus\\
		Denmark
	}
    \email{peter.jorgensen@math.au.dk}

\author[Shah]{Amit Shah}
    \email{a.shah1728@gmail.com}

\date{\today}

\keywords{%
Cluster category,
contravariantly finite subcategory, 
extriangulated category,
Grothendieck group,
g-vectors, 
index,
triangulated category}

\subjclass[2020]{%
Primary 16E20; 
Secondary 13F60, 18E05, 18G80%
}

\begin{abstract}

Cluster algebras are categorified by cluster categories, and $g$-vectors are ca\-te\-go\-ri\-fi\-ed by the classic index with respect to cluster tilting subcategories.  However, the recently introduced completed discrete cluster categories of Dynkin type $\BA$ have a very limited supply of cluster tilting subcategories, so we define the index with respect to additive, contravariantly finite subcategories of which there are many more.

This permits us to extend several strong results from the classic theory to completed discrete cluster categories of Dynkin type $\BA$.  Notably, the index with respect to the subcategory generated by a fan triangulation distinguishes between rigid objects.

We also prove that our index is additive on triangles up to an error term.  This extends the key property which permits the classic index to be used in the categorification of cluster algebras.

\end{abstract}

\maketitle


\section{Introduction}
\label{sec:intro}

The classic index in triangulated categories is a categorification of $g$-vectors in cluster algebras.  The classic index is defined with respect to a cluster tilting subcategory, but we define the index with respect to an additive, contravariantly finite subcategory.  This extends the definition to a far larger class of subcategories.

The motivation comes from the recently introduced completed discrete cluster categories of Dynkin type $\BA$, which have a very limited supply of cluster tilting subcategories but do have many additive, contravariantly finite subcategories.  
Our definition of the index permits us to extend to these categories several strong results known for the classic index.  

We also extend the key, abstract result that the index is additive on triangles up to an error term.

We refer the reader to \cite[Def.~2.3]{Fomin-Zelevinsky-Cluster-Algebras-I} and \cite[Sec.~6]{FominZelevinsky-Cluster-algebras-IV-Coefficients} for the definition of cluster al\-ge\-bras and $g$-vectors, to \cite[Sec.~2.1]{Palu-Cluster-characters-for-2-Calabi-Yau-triangulated-categories} for the definition of the classic index in triangulated categories, and to \cite[Secs.~4 and 5]{DehyKeller-On-the-combinatorics-of-rigid-objects-in-2-Calabi-Yau-categories} for the explanation of how it categorifies $g$-vectors. 
Completed discrete cluster categories of Dynkin type $\BA$ were introduced in \cite[Sec.~2]{PaquetteYildirim}; the necessary background on these is recalled in Section~\ref{sec:completed-cluster-cat}.
Additional background on the index can be found in \cite{Fedele-Grothendieck}, \cite{Grabowski-Pressland-Cluster-structures}, \cite{Jorgensen-Tropical-friezes-and-the-index-in-higher-homological-algebra}, \cite{JorgensenShah-index}, \cite{Ogawa-Shah-resolution}, \cite{Reid-indecomposable-objects-determined-by-their-index-in-higher-homological-algebra}, \cite{Reid-Tropical}, \cite{Wang-Wei-Zhang}, \cite{WWZ:2023}.

\subsection{The definition of the index}
Let $\cat{C}$ be a skeletally small triangulated category.  Each object $C$ in $\cat{C}$ has a class $[C]^{\sp}$ in the split Grothendieck group $K^{\sp}_{0}(\cat{C})$.
\begin{DefIntro}
\label{def:A}
The {\em index with respect to an additive subcategory $\cat{X}$} of $\cat{C}$ is the map
\[
  \indxx{ \cat{ X }} :
  \operatorname{obj} \cat{C} 
  \to
  K_{0}(\cat{C},\BE_{ \cat{X} },\fs_{ \cat{X} }) 
\]
which maps an object $C$ of $\cat{C}$ to 
\[
  \indxx{ \cat{ X }}(C) \deff [C]_{\cat{X}}.
\]
Here $[C]_{\cat{X}}$ denotes the coset of $[C]^{\sp}$ in the following abelian group.
\[
  K_{0}(\cat{C},\BE_{ \cat{X} },\fs_{ \cat{X} }) 
  \deff 
  K^{\sp}_{0}(\cat{C})
  \Bigg/
  \Bigg\langle
    [A]^{\sp} - [B]^{\sp} + [C]^{\sp}
    \Bigg|\!
    \begin{array}{l}
      \mbox{There is a triangle $A \xrightarrow{} B \xrightarrow{} C \xrightarrow{ c } \Sigma A$} \\[1mm]
      \mbox{with $\cat{C}( X,c ) = 0$ for each $X$ in $\cat{X}$.}
    \end{array}    
  \Bigg\rangle
\]
\end{DefIntro}
\noindent
Note that contravariant finiteness of $\cat{X}$ is not in fact required in the definition but will be necessary for most of our results.  The notation $K_{0}(\cat{C},\BE_{ \cat{X} },\fs_{ \cat{X} })$ is motivated by the existence of an extriangulated category $(\cat{C},\BE_{ \cat{X} },\fs_{ \cat{X} })$; see Section \ref{sec:extriangulated-categories}.
The definition is based on an idea due to \cite[Sec.~2]{PadrolPaluPilaudPlamondon-Associahedra-for-finite-type-cluster-algebras-and-minimal-relations-between-g-vectors} and appeared in \cite[Def.~3.5]{JorgensenShah-index} in the case where $\cat{X}$ is \emph{rigid}, that is, satisfies $\cat{C}( \cat{X},\Sigma\cat{X} ) = 0$.

It was proved in \cite[Prop.~4.11]{PadrolPaluPilaudPlamondon-Associahedra-for-finite-type-cluster-algebras-and-minimal-relations-between-g-vectors} that if $\cat{X}$ is a cluster tilting subcategory of $\cat{C}$, then the definition recovers the classic index in the following sense: 
$K_0( \cat{C},\BE_{\cat{X}},\fs_{\cat{X}} )$ is isomorphic to $K^{\sp}_{0}(\cat{X})$, and under the isomorphism, $\indxx{ \cat{X} }(C)$ corresponds to $[ X_0 ]^{ \sp } - [ X_1 ]^{ \sp }$ when there is a triangle $X_1 \xrightarrow{} X_0 \xrightarrow{} C \xrightarrow{} \Sigma X_1$ in $\cat{C}$ with $X_0$ and $X_1$ in $\cat{X}$.

\subsection{The index in completed discrete cluster categories of Dynkin type \texorpdfstring{$\BA$}{A}}
The completed discrete cluster category $\overline{\mathcal{C}_n}$ of Dynkin type $\BA$ has a combinatorial model $( S,\overline{ M_n } )$ given by a disc $S$ with marked points $M_n$ on the boundary.  The marked points have two-sided convergence to a set of $n$ accumulation points, and adding them to $M_n$ gives the closure $\overline{ M_n }$.  Arcs between points in $\overline{ M_n }$ correspond to indecomposable objects in $\overline{\mathcal{C}_n}$ by \cite[Cor.~3.11]{PaquetteYildirim}.

Figure~\ref{fig:1-acc-pt} shows a nice, so-called fan triangulation of $( S,\overline{ M_n } )$, and cluster theory suggests to think of its additive closure $\cat{X}$ in $\overline{\cat{C}_n}$ as playing the role of a cluster tilting subcategory (see \cite[Thm.~0.5]{GHJ}), but $\cat{X}$ is actually far from cluster tilting; indeed, it is not even rigid.  Hence the classic index does not apply, but our index does.

Indeed, let $\cat{X}$ be the additive closure of a fan triangulation of $( S,\overline{ M_n } )$.  Then the codomain $K_0( \overline{\cat{C}_n},\BE_{\cat{X}},\fs_{\cat{X}} )$ of $\indxx{\cat{X}}$ is the ``correct'' abelian group:
\begin{ThmIntro}
[=\cref{thm:fan-index-isom}]
\label{thm:B}
There is an isomorphism $K_0( \overline{\cat{C}_n},\BE_{\cat{X}},\fs_{\cat{X}} ) \cong K^{\sp}_0( \cat{X} )$.  
\end{ThmIntro}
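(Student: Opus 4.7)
The plan is to exhibit mutually inverse homomorphisms
\[
  \Phi \colon K^{\sp}_0( \cat{X} ) \longrightarrow K_0( \ol{\cat{C}_n},\BE_{\cat{X}},\fs_{\cat{X}} ),
  \qquad
  \Psi \colon K_0( \ol{\cat{C}_n},\BE_{\cat{X}},\fs_{\cat{X}} ) \longrightarrow K^{\sp}_0( \cat{X} ).
\]
The map $\Phi$ is induced by the inclusion $\cat{X} \hookrightarrow \ol{\cat{C}_n}$ followed by the quotient, and is trivially well defined because $K^{\sp}_0( \cat{X} )$ carries no non-trivial relations. All of the work is in constructing $\Psi$ and verifying that the two maps are mutually inverse.

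The construction rests on the following \emph{Key Lemma}: for every object $C$ of $\ol{\cat{C}_n}$ there is a triangle
\[
  X_1 \longrightarrow X_0 \longrightarrow C \xrightarrow{\, c \,} \sus X_1
\]
with $X_0, X_1 \in \cat{X}$ and $\ol{\cat{C}_n}( X,c ) = 0$ for every $X \in \cat{X}$; equivalently, $X_0 \to C$ is a right $\cat{X}$-approximation whose homotopy fibre also lies in $\cat{X}$. I would prove this geometrically in the combinatorial model $( S,\ol{M_n} )$: a fan triangulation distinguishes the apex marked points of each fan, and a case analysis of how the arc representing $C$ sits relative to these apices produces, by reading off intersections, both an explicit right $\cat{X}$-approximation $X_0 \to C$ and a fibre representable by a direct sum of fan arcs.

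Given the Key Lemma I would set $\Psi( [C]^{\sp} ) \deff [X_0]^{\sp} - [X_1]^{\sp} \in K^{\sp}_0( \cat{X} )$ for indecomposable $C$ and extend additively. Independence of the chosen triangle is a Schanuel-type argument; here the non-rigidity of $\cat{X}$ is compensated for by the vanishing $\ol{\cat{C}_n}(X,c)=0$, which is precisely the condition under which two competing right $\cat{X}$-approximations give the same formal difference. To see that $\Psi$ descends to the quotient, take any defining triangle $A \to B \to C \xrightarrow{c} \sus A$, apply the Key Lemma to $A$ and to $C$, and use the octahedral axiom to splice the two approximation triangles into a compatible one for $B$; this yields $\Psi( [B]^{\sp} ) = \Psi( [A]^{\sp} ) + \Psi( [C]^{\sp} )$. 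Finally $\Psi \circ \Phi = \operatorname{id}$ holds on generators by construction, and $\Phi \circ \Psi = \operatorname{id}$ because the Key Lemma triangle is itself one of the defining relations of $K_0( \ol{\cat{C}_n},\BE_{\cat{X}},\fs_{\cat{X}} )$.

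The main obstacle is the Key Lemma. Because $\cat{X}$ is not rigid, one cannot invoke the classical cluster-tilting argument of \cite[Prop.~4.11]{PadrolPaluPilaudPlamondon-Associahedra-for-finite-type-cluster-algebras-and-minimal-relations-between-g-vectors} to secure both $X_0$ and $X_1$ in $\cat{X}$ formally; the fact that the fibre of a right $\cat{X}$-approximation again lies in $\cat{X}$ has to be extracted from the specific geometry of a fan triangulation in the disc model of $\ol{\cat{C}_n}$. The related well-definedness of $\Psi$ in the absence of rigidity is the other delicate point.
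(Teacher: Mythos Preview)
Your plan is correct and your Key Lemma is exactly the paper's \cref{lem:approximation}. The two proofs diverge only after this lemma is established. The paper does not build $\Psi$ by hand: instead it observes that in the relative extriangulated category $(\ol{\cat{C}_n},\BE_{\cat{X}},\fs_{\cat{X}})$ every object of $\cat{X}$ is $\BE_{\cat{X}}$-projective, that $\cat{X}$ is closed under cocones of $\fs_{\cat{X}}$-deflations, and that the Key Lemma says every object has an $\cat{X}$-resolution of length $1$; it then invokes the general resolution theorem of Ogawa--Shah \cite[Thm.~4.5]{Ogawa-Shah-resolution} as a black box to get the isomorphism. Your Schanuel-plus-horseshoe argument is precisely what that black box is doing in this special case, so your route is more self-contained but longer, while the paper's is cleaner at the cost of importing external machinery. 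One further difference worth noting: the paper proves the Key Lemma not by an explicit arc-by-arc construction of the approximation as you suggest, but by first appealing to contravariant finiteness of $\cat{X}$ (established separately via Franchini's criterion) to obtain \emph{some} right $\cat{X}$-approximation, and then showing by a short combinatorial contradiction that its fibre cannot cross any arc of $\cat{X}$ transversely, hence lies in $\cat{X}$ by maximality of the triangulation. Either method works.
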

\noindent
Moreover, our index distinguishes between rigid objects, extending \cite[Thm.~2.3]{DehyKeller-On-the-combinatorics-of-rigid-objects-in-2-Calabi-Yau-categories}.
\begin{ThmIntro}
[=\cref{thm:index_det_rigid}]
\label{thm:C}
There is an injection from the set of isomorphism classes of rigid objects in $\ol{\cat{C}_n}$ to $K_0( \overline{\cat{C}_n},\BE_{\cat{X}},\fs_{\cat{X}} )$ given by $[C] \mapsto \indxx{\cat{X}}(C)$.
\end{ThmIntro}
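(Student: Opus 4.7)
The strategy adapts Dehy–Keller's classical injectivity argument from the cluster-tilting setting. Suppose $C, C' \in \ol{\cat{C}_n}$ are rigid with $\indxx{\cat{X}}(C) = \indxx{\cat{X}}(C')$; the goal is $C \cong C'$. By \cref{thm:B}, the codomain is $K^{\sp}_0(\cat{X})$, so the first task is to pin down how a rigid object's index sits in this group. The expectation is that each rigid $C$ admits a two-term ``$\cat{X}$-presentation'', namely an $\fs_{\cat{X}}$-conflation
\[
  X_1 \to X_0 \to C \to \sus X_1
\]
with $X_0, X_1 \in \cat{X}$, and that the isomorphism of \cref{thm:B} sends $\indxx{\cat{X}}(C)$ to $[X_0]^{\sp} - [X_1]^{\sp}$. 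Contravariant finiteness of $\cat{X}$ supplies the right $\cat{X}$-approximation $X_0 \to C$; the combinatorial arc model of $\ol{\cat{C}_n}$ and the fan structure should then be used to show the cone $X_1$ again lies in $\cat{X}$ when $C$ is rigid.

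Given such presentations for $C$ and $C'$, the equality $[X_0]^{\sp} - [X_1]^{\sp} = [X_0']^{\sp} - [X_1']^{\sp}$ in the free abelian group $K^{\sp}_0(\cat{X})$ permits us, after absorbing common summands in $\cat{X}$, to assume $X_0 \cong X_0'$ and $X_1 \cong X_1'$. Using that $\ol{\cat{C}_n}$ is $2$-Calabi–Yau together with the rigidity hypothesis $\ol{\cat{C}_n}(C, \sus C) = 0 = \ol{\cat{C}_n}(C', \sus C')$, I would apply $\ol{\cat{C}_n}(-, C')$ and $\ol{\cat{C}_n}(C, -)$ to the two conflations to produce compatible morphisms $C \to C'$ and $C' \to C$, and then argue, as in Dehy–Keller, that one of these lifts to an isomorphism (the vanishing of the connecting maps forced by rigidity is the key point).

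The main obstacle lies in the first step: producing the two-term $\cat{X}$-presentation. In the cluster-tilting setting such a presentation exists tautologically, but here $\cat{X}$ is only contravariantly finite and is not even rigid, so the second $\cat{X}$-approximation is not automatic. I would handle this via the arc model of $\ol{\cat{C}_n}$: the arcs representing a rigid $C$ interact with the arcs of the fan triangulation in a controlled crossing pattern, and this pattern should combinatorially cut out a legitimate $X_1 \in \cat{X}$ completing the right $\cat{X}$-approximation of $C$ to an $\fs_{\cat{X}}$-conflation. Checking well-definedness and the required vanishing $\ol{\cat{C}_n}(X, c) = 0$ for all $X \in \cat{X}$ — so that the conflation is genuinely of $\fs_{\cat{X}}$-type, hence contributes the expected class in $K^{\sp}_0(\cat{X})$ — is the technical heart of the argument.
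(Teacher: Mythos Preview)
Your plan has a genuine gap. You invoke the $2$-Calabi--Yau property of $\ol{\cat{C}_n}$ to manufacture maps $C \to C'$ and $C' \to C$, but $\ol{\cat{C}_n}$ is \emph{not} $2$-Calabi--Yau; indeed, it does not even admit a Serre functor (see \cref{sec:completed-cluster-cat}). So the back-and-forth lifting you sketch has no footing here, and your concluding step collapses. Relatedly, rigidity of $C$ and $C'$ separately gives you nothing about the cross-$\Ext$ groups $\ol{\cat{C}_n}(C,\sus C')$ that such a lifting would require.

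You have also misplaced the role of rigidity. The two-term $\cat{X}$-presentation $X_1 \to X_0 \to C \to \sus X_1$ with $X_0,X_1 \in \cat{X}$ and connecting map in $\BE_{\cat{X}}$ exists for \emph{every} object of $\ol{\cat{C}_n}$, rigid or not: this is exactly what underlies \cref{thm:B}, and the arc-model argument producing it never sees rigidity. Rigidity enters elsewhere. The paper isolates a ``no common summands'' lemma: if $C$ is rigid and $X_1 \to X_0 \to C \to \sus X_1$ is a \emph{minimal} $\cat{X}$-approximation triangle, then every morphism $X_1 \to X_0$ is radical (hence $X_0$ and $X_1$ share no indecomposable summand). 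The proof uses only rigidity of $C$, the approximation property of $X_0 \to C$, and Krull--Schmidt---no Serre duality. With this in hand, equal indices force $X_0 \cong X_0'$ and $X_1 \cong X_1'$ on the nose, and the same radical-factorisation lets one run Dehy--Keller's cone-comparison argument verbatim. Your ``absorb common summands'' trick would match the presentations without this lemma, but then you still need to compare the cones, and the $2$-CY tool you reached for is unavailable; the radical-factorisation route is the correct replacement.
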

\noindent
Finally, our index applied to rigid objects gives sign-coherent, linearly independent sets of vectors, extending \cite[Secs.~2.4 and 2.5]{DehyKeller-On-the-combinatorics-of-rigid-objects-in-2-Calabi-Yau-categories}.
\begin{ThmIntro}
[=\cref{prop:rigid}]
\label{thm:D}
If $C\in\ol{\cat{C}_n}$ is rigid and $U$, $V$ are direct summands of $C$, then $\{ \indxx{\cat{X}}(U), \indxx{\cat{X}}(V) \}$ is a sign-coherent subset of $K_0( \overline{\cat{C}_n},\BE_{\cat{X}},\fs_{\cat{X}} )$ with respect to the basis given by the isomorphism in \cref{thm:B}.  
(See \cite[Sec.~2.4]{DehyKeller-On-the-combinatorics-of-rigid-objects-in-2-Calabi-Yau-categories} for the definition of sign-coherence.)
\end{ThmIntro}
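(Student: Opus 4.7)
The plan is to adapt the classical argument of Dehy--Keller \cite[Secs.~2.4 and 2.5]{DehyKeller-On-the-combinatorics-of-rigid-objects-in-2-Calabi-Yau-categories} from the cluster-tilting context to our contravariantly finite setting. Using \cref{thm:B}, we identify $K_0(\overline{\cat{C}_n},\BE_{\cat{X}},\fs_{\cat{X}})$ with the free abelian group $K_0^{\sp}(\cat{X})$ on the isomorphism classes of indecomposables in $\cat{X}$; these correspond to the arcs of the fan triangulation and form the distinguished basis. Sign-coherence of $\{\indxx{\cat{X}}(U),\indxx{\cat{X}}(V)\}$ then reduces to showing that, for each basis element $[X]^{\sp}$, its coefficients in the two indices have the same sign.

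First, we obtain a canonical form for the index of any summand of $C$. Contravariant finiteness of $\cat{X}$ provides, for each summand $W$ of $C$, a minimal right $\cat{X}$-approximation $X_0^W \to W$ fitting in a triangle
\[
X_1^W \to X_0^W \to W \to \Sigma X_1^W.
\]
The connecting map satisfies the vanishing condition from \cref{def:A}, so under the identification of \cref{thm:B} we obtain $\indxx{\cat{X}}(W) = [X_0^W]^{\sp} - [X_1^W]^{\sp}$. Minimality of the approximation should ensure that $X_0^W$ and $X_1^W$ share no common indecomposable summand, so the positive and negative parts of $\indxx{\cat{X}}(W)$ in the fixed basis are read off directly from the multiplicities of indecomposables in $X_0^W$ and $X_1^W$ respectively.

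The core step is then the claim that, for summands $U,V$ of the rigid object $C$, the objects $X_0^U$ and $X_1^V$ share no common indecomposable summand (and symmetrically with the roles of $U$ and $V$ swapped). Granted this, sign-coherence follows: an indecomposable $X$ in $\cat{X}$ contributes with a positive coefficient to $\indxx{\cat{X}}(U)$ only if $X$ is a summand of $X_0^U$, in which case by the claim it is not a summand of $X_1^V$, hence $[X]^{\sp}$ cannot appear with a negative coefficient in $\indxx{\cat{X}}(V)$. I would argue the claim by contradiction: a common indecomposable summand $X$ of $X_0^U$ and $X_1^V$ yields, by composing the inclusion into $X_1^V$, the connecting morphism for $V$, and the approximation $X_0^U \to U$, a non-zero element of $\overline{\cat{C}_n}(U,\Sigma V)$, contradicting the rigidity $\Ext^1_{\overline{\cat{C}_n}}(C,C)=0$.

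The main obstacle is this last non-vanishing step: promoting a hypothetical common summand to a genuinely non-zero element of $\Ext^1_{\overline{\cat{C}_n}}(U,V)$. Unlike in the cluster-tilting setting, $\cat{X}$ is not closed under $\Sigma^{-1}$, so the triangle defining $X_1^W$ has a weaker universal property and some care is required in the diagram chase. The argument should exploit minimality of both approximations together with the explicit combinatorial control over Hom- and Ext-spaces in $\overline{\cat{C}_n}$ afforded by the fan triangulation via the arc--object correspondence \cite[Cor.~3.11]{PaquetteYildirim}, thereby pinpointing the specific arc-crossing configurations that force the relevant extension to be non-zero.
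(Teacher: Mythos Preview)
Your overall plan is correct and matches the paper's in spirit, but you take an unnecessarily hard route for the core step and thereby create the very obstacle you flag at the end.

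You correctly identify that the crux is showing $X_0^U$ and $X_1^V$ share no indecomposable summand. However, rather than trying to manufacture a nonzero element of $\Ext^1$ directly from a hypothetical common summand, the paper (following Dehy--Keller) simply applies the no-common-summands lemma---your assertion that minimality ``should ensure that $X_0^W$ and $X_1^W$ share no common indecomposable summand'' is exactly \cref{lem:no-common-summands}---to the rigid object $U\oplus V$ rather than to $U$ and $V$ separately. Since $U\oplus V$ is a summand of $C$ it is rigid, and since minimal right $\cat{X}$-approximations are additive on direct sums in a Krull--Schmidt category, the minimal approximation triangle of $U\oplus V$ is the direct sum of those for $U$ and $V$. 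Thus \cref{lem:no-common-summands} gives that $X_0^U\oplus X_0^V$ and $X_1^U\oplus X_1^V$ have no common indecomposable summand, which immediately yields the cross-term disjointness you want. No combinatorics of arcs is needed.

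Two further remarks. First, you state the no-common-summands fact for a single $W$ but do not prove it; the paper supplies a short algebraic proof in \cref{lem:no-common-summands} showing that rigidity forces every morphism $X_1\to X_0$ to be radical. Second, in your contradiction sketch the composite you build naturally lands in $\ol{\cat{C}_n}(V,\Sigma U)$, not $\ol{\cat{C}_n}(U,\Sigma V)$ as written; more importantly, proving that composite is nonzero is genuinely delicate (minimality controls the map $X_1^V\to X_0^V$, not obviously the components of $V\to\Sigma X_1^V$), which is precisely why the direct-sum trick is the cleaner path.
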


\begin{ThmIntro}
[=\cref{prop:linearly-independent}]
\label{thm:E}
If $C\in\ol{\cat{C}_n}$ is rigid and $\cat{U}$ is a finite set of indecomposable, pairwise non-isomorphic direct summands of $C$, then
$
\set{ \indxx{\cat{X}}(U) | U\in\cat{U} }
$
is linearly independent in $K_0( \overline{\cat{C}_n},\BE_{\cat{X}},\fs_{\cat{X}} )$.
\end{ThmIntro}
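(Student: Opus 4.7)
The plan is to deduce this linear independence from \cref{thm:C}, which asserts that the index is injective on isomorphism classes of rigid objects in $\ol{\cat{C}_n}$. I would begin by assuming a relation $\sum_{U \in \cat{U}} n_U \indxx{\cat{X}}(U) = 0$ in $K_0(\ol{\cat{C}_n}, \BE_{\cat{X}}, \fs_{\cat{X}})$ with coefficients $n_U \in \BZ$, and partitioning $\cat{U} = \cat{U}_+ \sqcup \cat{U}_- \sqcup \cat{U}_0$ according to whether $n_U$ is positive, negative or zero. Transposing the negative part to the right-hand side gives
\[
\sum_{U \in \cat{U}_+} n_U \indxx{\cat{X}}(U) = \sum_{U \in \cat{U}_-} (-n_U) \indxx{\cat{X}}(U).
\]

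The key step is to recognise each side as the index of a single rigid object. Set $C_+ \deff \bigoplus_{U \in \cat{U}_+} U^{n_U}$ and $C_- \deff \bigoplus_{U \in \cat{U}_-} U^{-n_U}$. Since $\cat{U}$ is finite and each $U \in \cat{U}$ is a direct summand of $C$, both $C_+$ and $C_-$ are direct summands of $C^N$ for sufficiently large $N$. Rigidity passes from $C$ to $C^N$ and to direct summands, so $C_+$ and $C_-$ are rigid. Moreover, the index is additive on finite direct sums: this holds by definition in $K^{\sp}_0(\ol{\cat{C}_n})$ and descends to the quotient $K_0(\ol{\cat{C}_n}, \BE_{\cat{X}}, \fs_{\cat{X}})$. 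Consequently the displayed equality becomes $\indxx{\cat{X}}(C_+) = \indxx{\cat{X}}(C_-)$.

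Now I would invoke \cref{thm:C}: two rigid objects with equal index must be isomorphic, so $C_+ \cong C_-$. To finish, I would apply Krull--Schmidt in $\ol{\cat{C}_n}$. The indecomposable summands of $C_+$ with their multiplicities are precisely the pairs $(U, n_U)$ for $U \in \cat{U}_+$, and those of $C_-$ are the pairs $(U, -n_U)$ for $U \in \cat{U}_-$. The hypothesis that $\cat{U}$ consists of pairwise non-isomorphic indecomposables means $\cat{U}_+$ and $\cat{U}_-$ are disjoint families of isomorphism classes, so uniqueness of Krull--Schmidt decompositions forces $\cat{U}_+ = \cat{U}_- = \emptyset$, i.e.\ $n_U = 0$ for every $U \in \cat{U}$.

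The main obstacle I anticipate is the clean invocation of Krull--Schmidt in $\ol{\cat{C}_n}$, which should be a standard feature of these completed discrete cluster categories but must be cited precisely from the background section recalling the properties of $\ol{\cat{C}_n}$. Everything else is a formal consequence of \cref{thm:C} together with the additivity of the index on finite direct sums and the elementary fact that direct summands of rigid objects are rigid.
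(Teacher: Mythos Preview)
Your proof is correct and is precisely the argument the paper has in mind: the paper simply says ``Following the argument in \cite[Sec.~2.5]{DehyKeller-On-the-combinatorics-of-rigid-objects-in-2-Calabi-Yau-categories}'', and that argument is exactly the one you have written out---separate positive and negative coefficients, use additivity of the index to rewrite each side as the index of a rigid object, apply \cref{thm:C} (the analogue of \cite[Thm.~2.3]{DehyKeller-On-the-combinatorics-of-rigid-objects-in-2-Calabi-Yau-categories}) to conclude the two objects are isomorphic, and finish with Krull--Schmidt in $\ol{\cat{C}_n}$ (which is recorded in \cref{sec:completed-cluster-cat}). No gaps.
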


\subsection{The index is additive up to an error term}
Having defined the index with respect to an additive subcategory and shown that it behaves well in completed discrete cluster categories of Dynkin type $\BA$, we establish the following key, abstract result.  It expresses a property so fundamental it can be viewed as the motivation to use the term {\em index} at all.  The classic case of the theorem is pivotal to the application of the index to the categorification of cluster algebras (see \cite[Prop.~2.2 and Sec.~3.1]{Palu-Cluster-characters-for-2-Calabi-Yau-triangulated-categories}).  

\begin{ThmIntro}
[=\cref{thm:theorem9}]
\label{thm:F}
Let $\cat{X}$ be an additive, contravariantly finite subcategory of the skeletally small idempotent complete triangulated category $\cat{C}$.  
There is a group homomorphism 
\[
  \theta_{\cat{X}} :
  K_{0}( \rmod{\cat{X}} )
  \xrightarrow{}
  K_{0}( \cat{C},\BE_{\cat{X}},\fs_{\cat{X}} )
\]
such that if $A \xrightarrow{} B \xrightarrow{} C \xrightarrow{ c } \Sigma A$ is a triangle in $\cat{C}$, then 
\[
  \indxx{\cat{X}}( A ) - \indxx{\cat{X}}( B ) + \indxx{\cat{X}}( C )
  =
  \theta_{\cat{X}}\big( [\Im\yoneda _{\cat{X}}c] \big).
\]
\end{ThmIntro}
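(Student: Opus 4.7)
My plan is to construct $\theta_{\cat{X}}$ using projective presentations in $\rmod{\cat{X}}$ and then to derive the additivity formula via an iterated octahedral argument on right $\cat{X}$-approximations.

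First, for the construction of $\theta_{\cat{X}}$: contravariant finiteness of $\cat{X}$ forces $\cat{X}$ to have weak kernels, since for $f \colon X \to X'$ in $\cat{X}$ one can embed $f$ in a triangle $Y \to X \to X' \to \Sigma Y$ and use a right $\cat{X}$-approximation $X'' \to Y$ to produce the weak kernel $X'' \to X$. Hence $\rmod{\cat{X}}$ is abelian, and by idempotent completeness its projectives are exactly the representables $\cat{X}(-,X)$. I then set $\theta_{\cat{X}}([M]) \deff [X_0]_{\cat{X}} - [X_1]_{\cat{X}}$ for $M$ with any projective presentation $\cat{X}(-,X_1) \to \cat{X}(-,X_0) \to M \to 0$; well-definedness follows from Schanuel's lemma in $\rmod{\cat{X}}$, and additivity on short exact sequences from the horseshoe lemma.

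For the formula, I would apply the octahedral axiom to the composition $X_C \xrightarrow{x_C} C \xrightarrow{c} \Sigma A$, where $x_C$ is a right $\cat{X}$-approximation with completing triangle $K_C \to X_C \to C \xrightarrow{c'} \Sigma K_C$. This yields a pulled-back triangle $A \to E \xrightarrow{e_1} X_C \xrightarrow{c \circ x_C} \Sigma A$ and a companion triangle $K_C \to E \to B \to \Sigma K_C$ whose connecting morphism factors as $c' \circ \beta$ and is killed by $\cat{X}$. Iterate: take a right $\cat{X}$-approximation $x_E \colon X_E \to E$ with triangle $K_E \to X_E \to E \xrightarrow{e'} \Sigma K_E$, and apply a second octahedral to $X_E \xrightarrow{x_E} E \xrightarrow{e_1} X_C$. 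This produces a triangle $F \to X_E \xrightarrow{\gamma} X_C \to \Sigma F$, with $\gamma = e_1 \circ x_E$, and a companion $K_E \to F \to A \to \Sigma K_E$ whose connecting morphism factors as $e' \circ (A \to E)$ and is likewise killed by $\cat{X}$. Surjectivity of $(x_E)_*$ and $(x_C)_*$, combined with exactness for the triangle $A \to E \to X_C$, identifies $\cat{X}(-,X_E) \xrightarrow{\gamma_*} \cat{X}(-,X_C) \to \Im \yoneda_{\cat{X}} c \to 0$ as a projective presentation, so $\theta_{\cat{X}}([\Im \yoneda_{\cat{X}} c]) = [X_C]_{\cat{X}} - [X_E]_{\cat{X}}$. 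Combining the relations from the two approximation triangles and the two companion triangles, all of which have $\cat{X}$-vanishing connecting morphisms, reduces the desired identity to the vanishing of a single residual term $[F]_{\cat{X}}$ in $K_0(\cat{C},\BE_{\cat{X}},\fs_{\cat{X}})$.

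The main obstacle will be precisely this vanishing of $[F]_{\cat{X}}$. My primary approach is to choose the second approximation $x_E$ with enough care that $F$ itself fits into a further $\cat{X}$-killed triangle; a backup is to argue globally with the long exact sequence of $\yoneda_{\cat{X}}$ applied directly to the original triangle, using a telescoping identity between $\theta_{\cat{X}}([\cat{X}(-,A)]) - \theta_{\cat{X}}([\cat{X}(-,B)]) + \theta_{\cat{X}}([\cat{X}(-,C)])$ and $\indxx{\cat{X}}(A) - \indxx{\cat{X}}(B) + \indxx{\cat{X}}(C)$. In either case, the proof should be viewed as a generalisation of Palu's argument for cluster-tilting subcategories (\cite{Palu-Cluster-characters-for-2-Calabi-Yau-triangulated-categories}), with the two-term $\cat{X}$-resolutions there replaced here by (potentially unbounded) towers of right $\cat{X}$-approximations, whose intricate octahedral combinatorics absorbs the additional complexity.
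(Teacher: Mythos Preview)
Your construction of $\theta_{\cat{X}}$ is not well-defined, and this is not a technicality that can be repaired by the backup plan.  Schanuel's lemma compares short exact sequences $0\to K\to P\to M\to 0$, not presentations $P_1\to P_0\to M\to 0$ in which $P_1\to P_0$ need not be injective.  Given any presentation $\cat{X}(-,X_1)\to\cat{X}(-,X_0)\to M\to 0$, the sequence $\cat{X}(-,X_1\oplus X)\to\cat{X}(-,X_0)\to M\to 0$ with $X$ mapping by zero is an equally valid presentation, and your formula returns $[X_0]_{\cat{X}}-[X_1]_{\cat{X}}-[X]_{\cat{X}}$ instead.  So $\theta_{\cat{X}}$ as written is not a map at all.

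This failure propagates to the octahedral argument: the ``residual term'' $[F]_{\cat{X}}$ does not vanish, and cannot be made to vanish by choosing $x_E$ more carefully.  Take $\cat{X}=\cat{C}$, so that $K_0(\cat{C},\BE_{\cat{X}},\fs_{\cat{X}})=K_0^{\sp}(\cat{C})$, and set $x_C=\id{C}$ and $x_E=\id{E}$.  Then $E\cong B$, $\gamma=b$, and $F\cong A$, so $[F]_{\cat{X}}=[A]^{\sp}$.  Your formula gives $\theta_{\cat{X}}([\Im\yoneda_{\cat{X}}c])=[C]^{\sp}-[B]^{\sp}$, which misses the required $[A]^{\sp}$ term exactly.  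This shows the defect is intrinsic to using two-term presentations: the correct $\theta_{\cat{X}}$ is genuinely a three-term invariant, and no Palu-style tower of approximations will manufacture the missing summand.

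The paper proceeds very differently.  It first defines $\theta_{\cat{C}}\colon K_0(\rmod{\cat{C}})\to K_0^{\sp}(\cat{C})$ directly by $[\Im\yoneda c]\mapsto [A]^{\sp}-[B]^{\sp}+[C]^{\sp}$ for a triangle $A\to B\to C\xrightarrow{c}\Sigma A$.  Well-definedness is the hard step: one shows that an isomorphism $\Im\yoneda c\cong\Im\yoneda c'$ extends to an isomorphism of $3$-extensions in the Frobenius category $\rmod{\cat{C}}$ and then invokes a lemma of Krause on projective--injective middle terms to deduce $C\oplus B'\oplus A\cong C'\oplus B\oplus A'$.  Additivity on short exact sequences uses a horseshoe together with May's $3\times 3$ lemma.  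Only then is $\theta_{\cat{X}}$ obtained, not by a new construction, but by showing that $\pi_{\cat{X}}\theta_{\cat{C}}$ kills $K_0(\Ker\restr{(-)}{\cat{X}})$ and hence descends along the surjection $K_0(\rmod{\cat{C}})\twoheadrightarrow K_0(\rmod{\cat{X}})$.
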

\noindent
Here $\rmod{\cat{X}}$ denotes the abelian category of finitely presented functors $\cat{X}^{\op}\rightarrow \Ab$.  Its Grothendieck group in the classic sense is $K_{0}( \rmod{\cat{X}} )$, and $\yoneda _{\cat{X}}c$ denotes the morphism $\restr{\cat{C}(-,C)}{\cat{X}} \xrightarrow{} \restr{\cat{C}(-,\Sigma A)}{\cat{X}}$ in $\rmod{\cat{X}}$ induced by $C \xrightarrow{ c } \Sigma A$.  The theorem extends \cite[Thm.~A]{JorgensenShah-index} to the case of a non-rigid subcategory $\cat{X}$, but the proof is completely different to that of \cite[Thm.~A]{JorgensenShah-index}.  Indeed, \cref{thm:F} is obtained as a consequence of the following result, which builds on new techniques introduced in \cite[Sec.~3]{CGMZ}.  Note that $K_{0}( \cat{C},\BE_{\cat{X}},\fs_{\cat{X}} )$ is now absent, and only the classic groups $K_{0}( \rmod{\cat{C}} )$ and $K^{\sp}_{0}(\cat{C})$ appear.
\begin{ThmIntro}
[=\cref{thm:theorem5}]
\label{thm:G}
Let $\cat{C}$ be a skeletally small idempotent complete triangulated category.  There is a group homomorphism 
\[
  \theta_{\cat{C}} :
  K_{0}( \rmod{\cat{C}} )
  \xrightarrow{}
  K^{\sp}_{0}(\cat{C})
\]
such that if $A \xrightarrow{} B \xrightarrow{} C \xrightarrow{ c } \Sigma A$ is a triangle in $\cat{C}$, then 
\[
  [ A ]^{\sp} - [ B ]^{\sp} + [ C ]^{\sp}
  =
  \theta_{\cat{C}}\big( [\Im\yoneda _{\cat{C}}c] \big).
\]
\end{ThmIntro}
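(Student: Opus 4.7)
The plan is to write $\theta_{\cat{C}}$ down explicitly on the natural generators of $K_{0}(\rmod{\cat{C}})$ and then verify well-definedness and additivity on short exact sequences. Every $F \in \rmod{\cat{C}}$ admits a projective presentation $\cat{C}(-,B)\xrightarrow{\yoneda b}\cat{C}(-,C)\to F\to 0$, so by Yoneda we obtain a morphism $b\colon B\to C$ in $\cat{C}$, which we complete to a triangle $A\to B\xrightarrow{b}C\xrightarrow{c}\Sigma A$. The long exact sequence of $\cat{C}(-,?)$ then identifies $F\cong\Im\yoneda_{\cat{C}}c$, so every object of $\rmod{\cat{C}}$ has the form appearing in the theorem statement. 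The required identity thus forces the tentative assignment
\[
  \tilde\theta(F) \deff [A]^{\sp}-[B]^{\sp}+[C]^{\sp},
\]
and the whole proof consists of showing that $\tilde\theta$ is (a) independent of the chosen presentation, (b) well-defined on isomorphism classes, and (c) additive on short exact sequences in $\rmod{\cat{C}}$.

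For (a) and (b), two projective presentations of $F$ differ by Schanuel-type modifications, namely by adding or removing summands of representable projectives. Via Yoneda, these modifications lift to corresponding changes of morphisms in $\cat{C}$, and TR3 produces morphisms between the resulting triangle completions whose net effect on the quantity $[A]^{\sp}-[B]^{\sp}+[C]^{\sp}$ is zero. Here idempotent completeness of $\cat{C}$ is used to guarantee that direct summands of objects in $\cat{C}$ exist.

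For (c), applying the horseshoe lemma to a short exact sequence $0\to F'\to F\to F''\to 0$ in $\rmod{\cat{C}}$ produces compatible projective presentations of $F'$, $F$ and $F''$. Lifting via Yoneda to a commutative square in $\cat{C}$ and applying TR4 yields a $3\times 3$ array of triangles relating the three triangle completions, from which $\tilde\theta(F)=\tilde\theta(F')+\tilde\theta(F'')$ can be read off by direct accounting of objects. Consequently $\tilde\theta$ descends to the desired homomorphism $\theta_{\cat{C}}\colon K_{0}(\rmod{\cat{C}})\to K^{\sp}_{0}(\cat{C})$ satisfying the stated formula.

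The principal obstacle is the non-uniqueness of triangle completions: the isomorphism class of $\Im\yoneda_{\cat{C}}c$ does not pin down the triangle $A\to B\to C\xrightarrow{c}\Sigma A$, so comparing different completions demands a careful, systematic use of the octahedral axiom. This is precisely what the new techniques of \cite[Sec.~3]{CGMZ} are designed to handle, and the proof goes through by invoking them to align the relevant octahedra and to verify the identities required in (b) and (c).
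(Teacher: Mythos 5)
Your overall architecture is the same as the paper's: define the map on each $\fun{M}\cong\Im\yoneda c$ by $[A]^{\sp}-[B]^{\sp}+[C]^{\sp}$, then verify independence of the choices and additivity on short exact sequences via the Horseshoe Lemma and an octahedral-type $3\times 3$ diagram. However, at the two places where the real work happens the proposal substitutes an assertion for an argument, and in the additivity step the asserted shortcut does not work. The Horseshoe Lemma together with a $3\times 3$-type compatibility (the paper uses the dual of May's lemma) does produce a diagram whose rows present $\fun{M}'$, $\fun{M}$, $\fun{M}''$ and whose columns are triangles; in particular one gets a triangle $A'\to A\to A''\to\sus A'$ and a triangle $A\to B'\oplus B''\to C'\oplus C''\xrightarrow{c}\sus A$ realising $\fun{M}$. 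At this point ``direct accounting of objects'' gives nothing: the target is the \emph{split} Grothendieck group, in which a non-split triangle imposes no relation, so you still must prove $A\cong A'\oplus A''$. Establishing exactly this splitting is the content of the paper's \cref{lem:lemma3}: apply $\yoneda$ to the $3\times 3$ diagram, break it into short exact sequences, use the Nine Lemma repeatedly to see that $0\to\yoneda A'\to\yoneda A\to\yoneda A''\to 0$ is exact, and then use projectivity of $\yoneda A''$ and full faithfulness of $\yoneda$ to split $A'\to A\to A''$ in $\cat{C}$. Your proposal contains no trace of this step, and without it the identity $\tilde\theta(\fun{M})=\tilde\theta(\fun{M}')+\tilde\theta(\fun{M}'')$ is unproved.

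The well-definedness step is also only gestured at. Saying that two presentations ``differ by adding or removing summands of representable projectives'' and that TR3 then makes the net effect zero is not an argument: TR3 only yields a morphism of triangles whose components $B\to B'$ and $C\to C'$ are in general not isomorphisms, and in $K^{\sp}_{0}(\cat{C})$ one cannot cancel along non-split comparisons. This step can in fact be repaired by a genuine stabilisation argument on presentations (a Schanuel-style matrix trick, adding both identity summands $R\xrightarrow{1}R$ and zero summands in degree one, and checking that each move changes the cone by the corresponding direct summand, hence preserves $[A]^{\sp}-[B]^{\sp}+[C]^{\sp}$), but you neither state nor prove such a statement. The paper's route is different and worth comparing: given triangles with $\Im\yoneda c\cong\Im\yoneda c'$, it lifts the isomorphism to an isomorphism of $3$-extensions $\bbsus^{-1}\Im\yoneda c\hookrightarrow\yoneda A\to\yoneda B\to\yoneda C\onto\Im\yoneda c$ and then applies the strengthened Schanuel lemma \cite[Lem.~A.2]{Krause-cohomological-length-functions}, which crucially uses that representables are projective \emph{and} injective in the Frobenius category $\rmod{\cat{C}}$, to obtain the genuine direct-sum isomorphism $C\oplus B'\oplus A\cong C'\oplus B\oplus A'$ (see \cref{lem:lemma4}). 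Merely invoking \cite{CGMZ} ``to align the relevant octahedra'' does not discharge either of these two points.
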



%
\subsection{Conventions}
\label{sec:conventions}

All subcategories are full in this article. 
By an \textit{additive subcategory} of an additive category, we mean a subcategory that is closed under isomorphisms, finite direct sums and direct summands. 
We denote by $\Ab$ the category of all abelian groups.


\section{Extriangulated categories}
\label{sec:extriangulated-categories}

Extriangulated category theory was initiated by Nakaoka--Palu in \cite{NakaokaPalu-extriangulated-categories-hovey-twin-cotorsion-pairs-and-model-structures}. Prototypical classes of examples of these categories are triangulated categories (see \cref{sec:triangulated-is-extriangulated}) and, modulo set-theoretic technicalities, exact categories. 
We briefly recap some details on extriangulated categories, but we refer the reader to \cite[Sec.~2]{NakaokaPalu-extriangulated-categories-hovey-twin-cotorsion-pairs-and-model-structures} for a precise treatment.

\subsection{Fundamentals}

For this section, let $(\cat{C},\BE,\fs)$ be an extriangulated category. 
Recall that this means $\cat{C}$ is an additive category, $\BE\colon \cat{C}^{\op}\times\cat{C}\to \Ab$ is a biadditive functor and $\fs$ is a realisation of $\BE$. Moreover, this triplet satisfies the axioms laid out in \cite[Def.~2.12]{NakaokaPalu-extriangulated-categories-hovey-twin-cotorsion-pairs-and-model-structures}.

Importantly, the realisation $\fs$ associates to each 
$\alpha\in\BE(C,A)$ an equivalence class 
$\fs(\alpha) 
    = [\begin{tikzcd}[column sep=0.5cm,cramped]
        A \arrow{r}{a}& B\arrow{r}{b} & C
        \end{tikzcd}]
$
of $3$-term sequences in $\cat{C}$.
Recall that two sequences 
$\begin{tikzcd}[column sep=0.5cm,cramped]
    A \arrow{r}{a}& B\arrow{r}{b} & C
\end{tikzcd}$
and 
$\begin{tikzcd}[column sep=0.5cm,cramped]
    A \arrow{r}{a'}& B'\arrow{r}{b'} & C
\end{tikzcd}$
are \emph{equivalent} if there is an isomorphism $f\colon B\to B'$ such that $fa=a'$ and $b'f=b$.
If $\fs(\alpha) 
    = [\begin{tikzcd}[column sep=0.5cm,cramped]
        A \arrow{r}{a}& B\arrow{r}{b} & C
        \end{tikzcd}]
$,
then 
$\begin{tikzcd}[column sep=0.5cm,cramped]
    A \arrow{r}{a}& B\arrow{r}{b} & C
\end{tikzcd}$
is called an \emph{$\fs$-conflation} and 
\begin{equation}\label{eqn:extriangle-notation}
    \begin{tikzcd}
    A \arrow{r}{a}& B\arrow{r}{b} & C \arrow[dashed]{r}{\alpha} &{}
    \end{tikzcd}
\end{equation}
is known as an \emph{$\fs$-triangle}.


\subsection{Relative theory}
\label{sec:relative-theory}
 
An $\fs$-triangle \eqref{eqn:extriangle-notation} in $(\cat{C},\BE,\fs)$ induces a natural transformation $\alpha_{\sharp}\colon \cat{C}(-,C) \to \BE(-,A)$, given by 
$(\alpha_{\sharp})_{D}(d)
    = d^{*}\alpha
    \deff \BE(d,A)(\alpha)
$ 
for each object $D\in\cat{C}$ and each morphism $d\colon D\to C$ 
(see \cite[Def.~3.1]{NakaokaPalu-extriangulated-categories-hovey-twin-cotorsion-pairs-and-model-structures}).
In addition, any subcategory of $(\cat{C},\BE,\fs)$ induces a relative extriangulated theory using $\alpha_{\sharp}$ by \cite[Sec.~3.2]{HerschendLiuNakaoka-n-exangulated-categories-I-definitions-and-fundamental-properties}.

\begin{defnprop}
\label{def-prop:relative-structure}
    Suppose $\cat{X}$ is a subcategory of $\cat{C}$. 
    Define
    \[
    \BE_{\cat{X}}(C,A) 
    \deff 
    		\Set{ \alpha\in\BE(C,A) | (\alpha_{\sharp})_{X} = 0 \text{ for every } X\in\cat{X}}
    \]
    and put 
    $\fs_{\cat{X}} \deff \restr{\fs}{\BE_{\cat{X}}}$.
    Then the triplet $(\cat{C},\BE_{\cat{X}},\fs_{\cat{X}})$ is an extriangulated category. 
\end{defnprop}

Furthermore, note that if we choose $\cat{X} = \cat{C}$, then the biadditive functor $\BE_{\cat{X}} = \BE_{\cat{C}}$ is trivial, i.e.\ $\BE_{\cat{C}}(C,A) = 0$ for all $A,C\in\cat{C}$. In this case, the extriangulated structure $(\BE_{\cat{C}},\fs_{\cat{C}})$ on $\cat{C}$ corresponds to the split exact structure on $\cat{C}$. 
On the other hand, if $\cat{X} = 0$, then $\BE_{\cat{X}} = \BE_{0} = \BE$ and 
$(\cat{C},\BE_{0},\fs_{0}) = (\cat{C},\BE,\fs)$.

\subsection{Grothendieck groups}
\label{subsec:Grothendieck_groups}
Suppose now that $(\cat{C},\BE,\fs)$ is a skeletally small extriangulated category. 
Since $\cat{C}$ is an additive category, recall that one defines the \emph{split Grothendieck group} $K^{\sp}_{0}(\cat{C})$ as the quotient of the free abelian group generated by isomorphism classes $[A]$ of objects in $\cat{C}$ by the subgroup generated by elements 
$[A] - [B] + [C]$ for each split exact sequence
\begin{tikzcd}[column sep=0.3cm,cramped]
    A \arrow{r}{}& B\arrow{r}{} & C
\end{tikzcd}
in $\cat{C}$. 
A generating element in $K^{\sp}_{0}(\cat{C})$ is denoted by $[A]^{\sp}$.

Depicting the structure of $(\cat{C},\BE,\fs)$ like in \eqref{eqn:extriangle-notation} makes it apparent that there is a natural definition of the Grothendieck group of $(\cat{C},\BE,\fs)$, which recovers the classical Grothendieck group of a triangulated or an exact category.

\begin{defn}
\label{def:grothendieck-group}
The \emph{Grothendieck group} of $(\cat{C},\BE,\fs)$ is the abelian group  
\[
K_{0}(\cat{C},\BE,\fs) 
    \deff 
    K^{\sp}_{0}(\cat{C})  /  \Braket{ [A]^{\sp} - [B]^{\sp} + [C]^{\sp} | \text{\eqref{eqn:extriangle-notation} is an $\fs$-triangle} }.
\]
\end{defn}

\begin{notation}
    Suppose $\cat{X}$ is a subcategory of $\cat{C}$. We denote the $K_{0}$-class of an object $C\in\cat{C}$ by:
    \begin{enumerate}
        \item $[C]^{\sp}$ in $K^{\sp}_{0}(\cat{C})$; 
        \item $[C]$ in $K_{0}(\cat{C},\BE,\fs)$; and 
        \item $[C]_{\cat{X}}$ in $K_{0}(\cat{C},\BE_{\cat{X}},\fs_{\cat{X}})$.
    \end{enumerate}
\end{notation}

Note that we have a commutative diagram
\begin{equation}\label{eqn:all-the-pi}
\begin{tikzcd}
K^{\sp}_{0}(\cat{C}) = K_{0}(\cat{C},\BE_{\cat{C}},\fs_{\cat{C}})
	\arrow[two heads]{rr}{\pi_{0}}
	\arrow[two heads]{dr}[swap]{\pi_{\cat{X}}}
&& K_{0}(\cat{C},\BE,\fs) = K_{0}(\cat{C},\BE_{0},\fs_{0})\\
& K_{0}(\cat{C},\BE_{\cat{X}},\fs_{\cat{X}})
	\arrow[two heads]{ur}[swap]{\rho_{\cat{X}}}
&
\end{tikzcd}
\end{equation}
of canonical surjections, 
where $\pi_{\cat{X}}([C]^{\sp}) = [C]_{\cat{X}}$ and $\rho_{\cat{X}}([C]_{\cat{X}}) = [C]$ on generators.


\subsection{Triangulated categories}
\label{sec:triangulated-is-extriangulated}

We close this preliminary section by explaining some of the above abstract concepts for the specific case of triangulated categories, since this is our focus in the remainder of the article.
Thus, let $(\cat{C},\sus,\triangle)$ be a triangulated category, 
where $\sus$ is the suspension functor and $\triangle$ is the triangulation. 

We obtain an extriangulated category $(\cat{C},\BE,\fs)$ that captures the triangulated structure as follows. 
Define 
$\BE\colon \tensor[]{\cat{C}}{^{\op}}\times\cat{C}\to\Ab$
to be the biadditive functor given by: 
\begin{enumerate}
	\item for objects $A,C\in\cat{C}$, 
\[
\BE(C,A)\deff \cat{C}(C,\sus A);
\]
and

\item 
for morphisms $d\colon D\to C$ and $e\colon A\to E$ in $\cat{C}$, 
the abelian group homomorphism 
$
\BE(d,e)\colon \BE(C,A) \to \BE(D,E)
$
is
\[
\BE(d,e)(c)\deff (\sus e)\circ c\circ d.
\] 
\end{enumerate}

To define a realisation $\fs$ of $\BE$, let $c$ be an element of $\BE(C,A)$. 
Thus, $c\colon C \to \sus A$ is a morphism in $\cat{C}$, and hence is part of a triangle 
\begin{equation}\label{eqn:triangle}
\begin{tikzcd}
A \arrow{r}{a} & B \arrow{r}{b} & C	 \arrow{r}{c} & \sus A.
\end{tikzcd}
\end{equation}
We then put 
$\fs(c) = 
	[ \begin{tikzcd}[column sep=0.5cm,cramped]
	A \arrow{r}{a} & B \arrow{r}{b} & C
	\end{tikzcd} ]$.
The triplet $(\cat{C},\BE,\fs)$ is an extriangulated category by \cite[Prop.~3.22]{NakaokaPalu-extriangulated-categories-hovey-twin-cotorsion-pairs-and-model-structures}. 
In particular, in this case, we have that 
\begin{equation}\label{eqn:s-triangle-for-triangulated-category}
    \begin{tikzcd}
    A \arrow{r}{a}& B\arrow{r}{b} & C \arrow[dashed]{r}{c} &{}
    \end{tikzcd}
\end{equation}
is an $\fs$-triangle in $(\cat{C},\BE,\fs)$ 
if and only if \eqref{eqn:triangle} belongs to $\triangle$.

For an $\fs$-triangle \eqref{eqn:s-triangle-for-triangulated-category}, the natural transformation 
$c_{\sharp}$ defined in \cref{sec:relative-theory} is simply given by  
$(c_{\sharp})_{D}(d)
    = cd \colon D \to \sus A
$
for each object $D\in\cat{C}$ and each morphism $d\colon D\to C$. 
Moreover, 
if $\cat{X}\sse\cat{C}$ is a subcategory, then the relative extriangulated category $(\cat{C},\BE_{\cat{X}},\fs_{\cat{X}})$ satisfies 
\begin{align}
\BE_{\cat{X}}(C,A) 
	&= 
		\Set{ c\in\cat{C}(C,\sus A) | cx = 0 \text{ for every $x\colon X \to C$ with $X\in\cat{X}$} }\nonumber\\
	& = 
		\Set{ c\in\cat{C}(C,\sus A) | \restr{\cat{C}(-,c)}{\cat{X}} = 0}. \label{eqn:relative-yoneda}
\end{align}

Finally, we observe that the Grothendieck group $K_{0}(\cat{C},\BE,\fs)$ of the extriangulated structure on $\cat{C}$ coincides precisely with the classical Grothendieck group 
$K_{0}(\cat{C}) = K_{0}(\cat{C},\sus,\triangle)$ of the triangulated category.


\section{The discrete cluster category of type \texorpdfstring{$\BA$}{A} and its completion}

In this section, we recall the construction and main properties of Igusa--Todorov's discrete cluster categories of type $\BA$ \cite{IgusaTodorov} and their completions by Paquette--Y{\i}ld{\i}r{\i}m \cite{PaquetteYildirim}. 
In \cref{sec:index-in-cluster-cats}, we study our index for certain non-rigid subcategories of completed discrete cluster categories.

\subsection{The discrete cluster category \texorpdfstring{$\mathcal{C}_n$}{Cn}}\label{sec:discrete-cluster-cat}

Let $S$ be the disc, oriented anticlockwise, and consider a discrete set $M_n$ of infinitely many marked points on its boundary circle $\partial S$, having a positive number $n$ of two-sided accumulation points. Note that the accumulation points are not included in the set  $M_n$. The corresponding \emph{discrete cluster category of type $\BA$} over a fixed algebraically closed field $\field$, denoted $\mathcal{C}_n$, was first introduced by Holm--J{\o}rgensen in \cite{holmjorgensen2012cluster} for $n=1$ and then by Igusa--Todorov in \cite{IgusaTodorov} for general $n$. This is a small, $\field$-linear, Hom-finite, Krull-Schmidt, $2$-Calabi-Yau, triangulated category, with suspension denoted $\Sigma$. As explained in more detail in \cite{IgusaTodorov}, essential properties of $\mathcal{C}_n$ can be described using the geometric model $(S,M_n)$:
\begin{itemize}
    \item There is a bijection between the isomorphism classes of indecomposable objects and the arcs of $(S,M_n)$ between non-neighbouring points in $M_n$.(Arcs are always considered up to homotopy, whilst fixing the marked points.)
    \item Identifying an indecomposable $C\in\mathcal{C}_n$ with the corresponding arc $C=\{c_0,c_1\}$, the suspension $\sus C$ of $C$ is obtained by moving the endpoints of the arc by one clockwise step.
    \item For indecomposable objects $A,C\in\mathcal{C}_n$, we have $\mathcal{C}_n(C,\Sigma A) = \Ext^1_{\mathcal{C}_n}(C,A)$ is isomorphic to $\field$ if and only if 
    the arcs corresponding to $A$ and $C$ cross transversely (in the interior of $S$), and to $0$ otherwise.
\end{itemize}
Using the above, all the morphisms and triangles, including the Auslander-Reiten triangles, can be described.

As mentioned in \cref{sec:intro}, the triangulated index was originally introduced by Palu with respect to cluster tilting subcategories (see \cite{Palu-Cluster-characters-for-2-Calabi-Yau-triangulated-categories}). For discrete cluster categories of type $\BA$, cluster tilting subcategories have been fully classified, first by Liu--Paquette in \cite{LP} for $n=1,2$, and later by Gratz--Holm--J{\o}rgensen in \cite{GHJ} for general $n\geq 1$. In the following, by a \emph{triangulation} we mean a maximal set of arcs that pairwise do not cross transversely; see \cite[Def.~0.4]{GHJ} for the definition of a fountain and a leapfrog. 

\begin{prop}[{\cite[Thm.~0.5]{GHJ}}]\label{thm:CT-subcats-for-Cn}
    Let $\mathcal{X}$ be a set of arcs in $(S,M_n)$. Then the subcategory of $\mathcal{C}_n$ additively generated by $\mathcal{X}$ is a cluster tilting subcategory if and only if $\mathcal{X}$ is a triangulation  of $(S,M_n)$ such that each accumulation point has a fountain or a leapfrog converging to it.
\end{prop}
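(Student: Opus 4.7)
My plan is to use the geometric dictionary between indecomposable objects of $\cat{C}_n$ and arcs of $(S,M_n)$. Since $\Ext^1_{\cat{C}_n}(A,C) \neq 0$ exactly when the arcs corresponding to $A$ and $C$ cross transversely, rigidity of $\add\cat{X}$ is equivalent to $\cat{X}$ being a set of pairwise non-crossing arcs; using $2$-Calabi--Yau duality, the cluster tilting condition amounts to the statement that an indecomposable $\alpha \in \cat{C}_n$ lies in $\add\cat{X}$ if and only if the arc $\alpha$ crosses no arc of $\cat{X}$. Both implications are then a translation problem between properties of $\add\cat{X}$ and combinatorial properties of $\cat{X}$.

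For the necessity direction, rigidity of a cluster tilting subcategory immediately yields the non-crossing property, while the cluster tilting condition forces maximality among non-crossing sets, so $\cat{X}$ is a triangulation. To obtain the fountain/leapfrog condition at an accumulation point $p$, I would argue by contradiction: if neither a fountain nor a leapfrog converges to $p$, one can exhibit an arc $\alpha \notin \cat{X}$ with at least one endpoint arbitrarily close to $p$ which crosses no arc of $\cat{X}$, contradicting maximality. The construction of $\alpha$ is combinatorial and exploits the \emph{unused sector} near $p$ created by the failure of the convergence condition.

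For the sufficiency direction, three properties need to be verified: rigidity, the cluster tilting property, and functorial finiteness. Rigidity is immediate from non-crossing. The cluster tilting property says every arc $\alpha \notin \cat{X}$ must cross some arc of $\cat{X}$: when both endpoints of $\alpha$ lie in $M_n$, this follows directly from maximality of the triangulation, and when an endpoint of $\alpha$ lies near an accumulation point $p$, the fountain or leapfrog converging to $p$ must contain an arc that crosses $\alpha$ transversely. The main obstacle is establishing functorial finiteness of $\add\cat{X}$: for an arbitrary indecomposable $C \in \cat{C}_n$, I would build right and left $\add\cat{X}$-approximations from the arcs of $\cat{X}$ lying near the endpoints of $C$. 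The key point is showing these can be chosen finite, and here the fountain/leapfrog condition is essential since it provides exactly two \emph{outermost} arcs at each accumulation point near an endpoint of $C$, giving a finite bound on the number of summands required in each approximation.
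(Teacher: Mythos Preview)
The paper does not prove this proposition; it is quoted from \cite{GHJ} as background, so there is no in-paper argument to compare against.

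Your sufficiency outline is reasonable and correctly locates the main work in establishing functorial finiteness. Your necessity argument, however, has a genuine gap. You claim that the absence of a fountain or leapfrog at $p$ lets you produce an arc $\alpha\notin\cat{X}$ crossing nothing in $\cat{X}$, ``contradicting maximality.'' This cannot be right: if maximality alone forced the fountain/leapfrog condition, then every triangulation would automatically satisfy it and the theorem would read simply ``cluster tilting $\Leftrightarrow$ triangulation.'' The point of the theorem is precisely that there \emph{are} triangulations of $(S,M_n)$ without a fountain or leapfrog at some accumulation point (for instance, two one-sided fountains at $p$ with distinct base points), and these yield subcategories that are Ext-orthogonally maximal but not functorially finite---hence weakly cluster tilting but not cluster tilting. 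The fountain/leapfrog condition encodes functorial finiteness, not maximality. Your necessity direction should instead run: cluster tilting $\Rightarrow$ functorially finite $\Rightarrow$ every accumulation point has a fountain or leapfrog, and the second implication is where the substantive combinatorics in \cite{GHJ} lies.

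A smaller issue: in your sufficiency sketch, the case distinction for verifying Ext-orthogonality is unnecessary, since maximality alone already ensures every $\alpha\notin\cat{X}$ crosses some arc of $\cat{X}$, with no appeal to fountains or leapfrogs needed at that step.
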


\subsection{The completed discrete cluster category \texorpdfstring{$\overline{\mathcal{C}_n}$}{Cn}}
\label{sec:completed-cluster-cat}

In the geometric model for the category $\mathcal{C}_n$,  accumulation points are not elements of the set of marked points, and hence the arcs at the accumulation points do not correspond to objects in $\mathcal{C}_n$. On the other hand, in the completed version, introduced for one accumulation point by Fisher in \cite{Fisher} and for $n$ accumulation points by Paquette--Y{\i}ld{\i}r{\i}m in \cite{PaquetteYildirim}, accumulation points become marked points and arcs leaving/entering them become objects in the corresponding ``completed'' cluster category $\overline{\mathcal{C}_n}$. We briefly recall the construction from \cite{PaquetteYildirim}.

Consider $(S,M_{2n})$ as in \cref{sec:discrete-cluster-cat}, and number the intervals between the $2n$ accumulation points from $1$ to $2n$ anticlockwise.
The \emph{completed discrete cluster category of type $\BA_{n}$} is defined as the Verdier quotient $\overline{\mathcal{C}_n} \deff \mathcal{C}_{2n}/\cat{D}$, where $\cat{D}$ is the thick subcategory of $\mathcal{C}_{2n}$ additively generated by all arcs with both endpoints in the same even-numbered interval. 
Note that 
$\cat{D}^\perp
	= \Set{ C\in \mathcal{C}_{2n} | \mathcal{C}_{2n}(\cat{D},C) = 0  }
$ 
then corresponds to all arcs with both endpoints in odd-numbered intervals. 
The category $\overline{\mathcal{C}_n}$ is still a small, $\field$-linear, Hom-finite, Krull-Schmidt triangulated category, with suspension denoted again $\Sigma$. However, it fails not only to be $2$-Calabi-Yau but also to have a Serre functor at all.\footnote{This was first noted in \cite[p.~56]{PaquetteYildirim}. By \cite[Thm.~I.2.4]{ReitenVandenBergh:2002}, $\overline{\mathcal{C}_n}$ cannot admit a Serre functor since it does not have Auslander-Reiten triangles in general (see \cite[Footnote 1 in Fig.~4]{CKP}).}

Passing to the Verdier quotient corresponds to collapsing each even-numbered interval to an accumulation point in the geometric model $(S,M_{2n})$. Thus, the corresponding geometric model of $\overline{\mathcal{C}_n}$ is 
$(S,\overline{M_n})$, where $\overline{M_n}$ is the infinite discrete set $M_n$ as above together with the $n$ accumulation points; see properties \ref{completion-objects}--\ref{morph_to_C} below. 
In the Verdier quotient: 
arcs in $\cat{D}$ vanish; 
arcs in $\cat{D}^\perp$ correspond to arcs between non-accumulation points; 
while arcs outside $\cat{D}\cup\cat{D}^\perp$ correspond to \textit{limit arcs}, that is, arcs where at least one endpoint is an accumulation point. Note also that each such limit arc is the image of infinitely many arcs under the Verdier quotient map $\mathcal{C}_{2n}\rightarrow \overline{\mathcal{C}_n}$.

Before recalling the fundamental properties of $\overline{\mathcal{C}_n}$, we set up some notation.

\begin{notation}\label{notation:discrete-cluster}
The boundary circle $\partial S$ inherits an anticlockwise orientation from the orientation on $S$. This orientation gives a cyclic ordering on $\overline{M_n}$, to which we implicitly refer in the rest.
  \begin{enumerate}[label={\textup{(\roman*)}}]
      \item\label{successor-predecessor} Each marked point $c \in \overline{M_n}$ that is not an accumulation point has an (immediate) successor $c^+$ in $\overline{M_n}$, which is the next marked point anticlockwise from $c$. 
      Similarly, $c$ has a predecessor $c^-\in \overline{M_n}$ that is the next marked point clockwise from $c$. 
      Formally, accumulation points do not have a direct successor or predecessor, but to simplify arguments we use the convention that if $c$ is an accumulation point, then $c^+ = c = c^-$.
      \item\label{ordering} Given points $x,y \in \overline{M_n}$, there is a total order on the interval $[x,y] \sse \overline{M_n}$ of points anticlockwise from $x$ to $y$ including $x$ and $y$. Moreover, we denote by $(x,y),\, [x,y)$ and $(x,y]$ the intervals of marked points between $x$ and $y$ excluding, respectively, $x$ and $y$, only $y$, and only $x$.
      \item In our figures below, black circles indicate accumulation points, while dashes indicate points that could be accumulation points or not (unless specified).
  \end{enumerate}  
\end{notation}

In the following list, we recall some important properties of $\overline{\mathcal{C}_n}$. See \cite[Cor.~3.11, Prop.~3.4, Prop.~3.14 and the discussion thereafter]{PaquetteYildirim} and \cite[Prop.~4.10]{Franchini} for more details. 
\smallskip

\begin{enumerate}[label={\textup{(P\arabic*)}}]
    \item\label{completion-objects} There is a bijection between the isomorphism classes of indecomposable objects in $\overline{\mathcal{C}_n}$ and the arcs of $(S,\overline{M_n})$ between non-neighbouring points in $\overline{M_n}$. We identify an indecomposable object $C$ and the corresponding arc $\{c_0,c_1\}$. 
    Furthermore, by abuse of notation, we denote the subcategory of $\overline{\cat{C}_{n}}$ additively generated by a set $\cat{X}$ of arcs in $(S,\overline{M_n})$ also by $\cat{X}$. 
    
    \item\label{suspension} 
    The suspension of $C=\{c_0,c_1\}$ is obtained by moving the endpoints of the arc by one \emph{clockwise} step when possible, that is, $\Sigma C=\{ c_0^-,c_1^-\}$ (see \cref{notation:discrete-cluster}\ref{successor-predecessor}). 
    \item\label{homspaces} For indecomposable objects $A,C\in\overline{\mathcal{C}_n}$, we have $\overline{\mathcal{C}_n}(C,\Sigma A) = \Ext^1_{\overline{\mathcal{C}_n}}(C, A)\cong \field$ if and only if one of the following holds: 
    \begin{enumerate}[label={\textup{(\roman*)}}]
        \item\label{transverse} the arcs $A$ and $C$ cross transversely;
        \item\label{one-endpoint} the arcs $A$ and $C$ are distinct, share exactly one accumulation point $p$ as an endpoint, and we can move the other endpoint $c_0 \neq p$ of $C$ anticlockwise along $\partial S$ to the other endpoint $a_0\neq p$ of $A$ \emph{without} passing through $p$ itself; or 
        \item\label{equal} the arcs $A$ and $C$ are equal, and both endpoints of $A=C$ are accumulation points.
    \end{enumerate}
    Otherwise, $\overline{\mathcal{C}_n}(C,\Sigma A) =  \Ext^1_{\overline{\mathcal{C}_n}}(C, A)=0$.
    \item\label{triangles-transverse} In \ref{homspaces}\ref{transverse}, 
    the non-zero morphisms in $\overline{\mathcal{C}_n}(C,\Sigma A)$ and $\overline{\mathcal{C}_n}(A,\Sigma C)$ extend, respectively, to triangles
\[
\begin{tikzcd}[column sep=0.5cm]
A \arrow{r}
& B_1\oplus B_2 \arrow{r}
& C \arrow{r}
& \sus A
\end{tikzcd}
	\hspace{0.5cm} \text{and} \hspace{0.5cm}
\begin{tikzcd}[column sep=0.5cm]
C \arrow{r}
& D_1\oplus D_2 \arrow{r}
& A \arrow{r}
& \sus C,
\end{tikzcd}
\]
    where $B_i,\,D_i$ are illustrated in the left picture of Figure~\ref{fig:figtriangleIV} and are equal to $0$ if they are arcs between neighbouring marked points.

    \item\label{triangle-one-acc-pt}
    In \ref{homspaces}\ref{one-endpoint}, 
    the non-zero morphism in $\overline{\mathcal{C}_n}(C,\Sigma A)$ extends to the triangle
\[
\begin{tikzcd}[column sep=0.5cm]
A \arrow{r}
& E \arrow{r}
& C \arrow{r}
& \sus A,
\end{tikzcd}
\]
    where $E$ is the arc illustrated in the right picture of Figure~\ref{fig:figtriangleIV} and is equal to $0$ if $\Sigma A=C$. On the other hand, note that $\overline{\mathcal{C}_n}(A,\Sigma C)=0$ and \emph{there is no} non-trivial triangle 
    of the form 
    $C\rightarrow F\rightarrow A\rightarrow \Sigma C.$
    
    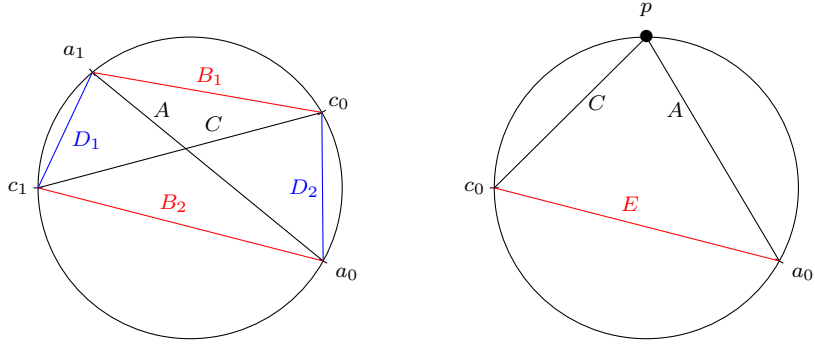
\begin{figure}[ht]
  \centering
    \begin{tikzpicture}[scale=2]
    \begin{scope}
    \draw (0,0) circle (1cm);

      \draw (30:0.97cm) -- (30:1.03cm);
      \draw (30:1.13cm) node{$\scriptstyle c_{0}$};
      
      \draw (180:0.97cm) -- (180:1.03cm);
      \draw (180:1.13cm) node{$\scriptstyle c_{1}$};
      
      \draw (-29:0.97cm) -- (-29:1.03);
      \draw (-29:1.18cm) node{$\scriptstyle a_{0}$};

      \draw (130:0.97cm) -- (130:1.03cm); 
      \draw (130:1.18cm) node{$\scriptstyle a_{1}$};
      
      \draw (180:1cm) -- (30:1cm);
      \draw (-29:1cm) -- (130:1cm);
     
     \draw [red] (-29:1cm) -- (180:1cm);
     \draw [red] (30:1cm) -- (130:1cm);
     \draw [blue] (130:1cm) -- (180:1cm);
     \draw [blue] (30:1cm) -- (-29:1cm);
     
     \draw  (110:0.55cm) node{$\scriptstyle A$};
     \draw  (70:0.45cm) node{$\scriptstyle C$};
     \draw [red] (80:0.75cm) node{$\scriptstyle B_{1}$};
     \draw [red] (-136:0.15cm) node{$\scriptstyle B_{2}$};
     \draw [blue] (155:0.75cm) node{$\scriptstyle D_{1}$};
      \draw [blue] (0:0.75cm) node{$\scriptstyle D_{2}$};
      \end{scope}
      \begin{scope}[xshift=3cm]
      \draw (0,0) circle (1cm);

      \draw (180:0.97cm) -- (180:1.03cm);
      \draw (180:1.13cm) node{$\scriptstyle c_{0}$};
      
      \draw (-29:0.97cm) -- (-29:1.03);
      \draw (-29:1.18cm) node{$\scriptstyle a_{0}$};

      \draw (90:1cm) node{$\bullet$}; 
      \draw (90:1.18cm) node{$\scriptstyle p$};
      
      \draw (90:1cm) -- (180:1cm);
      \draw (-29:1cm) -- (90:1cm);
     
     \draw [red] (-29:1cm) -- (180:1cm);
     
     \draw  (120:0.65cm) node{$\scriptstyle C$};
     \draw  (70:0.55cm) node{$\scriptstyle A$};
     \draw [red] (-136:0.15cm) node{$\scriptstyle E$};
      \end{scope}
    \end{tikzpicture}
    \caption{Terms appearing in nontrivial triangles with indecomposable endterms in $\overline{\mathcal{C}_n}$. Note that $p$ is an accumulation point, while the rest of the indicated points may be either accumulation points or not.}
    \label{fig:figtriangleIV}
\end{figure}
    
    \item In \ref{homspaces}\ref{equal}, 
	we have 
    $\Sigma A=A=C=\Sigma C$ and the triangle extending the non-zero morphism in $\overline{\mathcal{C}_n}(C,\Sigma A) = \overline{\mathcal{C}_n}(C,C)$ is
\[
\begin{tikzcd}[column sep=0.5cm]
C \arrow{r}
& 0 \arrow{r}
& C \arrow{r}{\cong}
& C.
\end{tikzcd}
\]
   
\item\label{morph_from_C} Let $C=\{c_0,c_1\}$ be an indecomposable in $\overline{\mathcal{C}_n}$. An indecomposable $D=\{d_0,d_1\}$ in $\overline{\mathcal{C}_n}$
(where $d_0\in [c_0 ,c_1]$)  
is such that $\overline{\mathcal{C}_n}(C,D)\cong \field$ if and only if 
$d_0\in[c_0,c_1^-)$ and $d_1\in[c_1,c_0^-)$.
Moreover, for such a $D$, the indecomposables $S=\{s_0,s_1\}$ such that a non-zero morphism $C\rightarrow D$ factors through $S$ are exactly those having one endpoint in $[c_0,d_0]$ and the other in $[c_1,d_1]$. See the left picture in Figure~\ref{fig:fig_morphisms_from_to_x} for an illustration.

\item\label{morph_to_C} Let $C=\{c_0,c_1\}$ be an indecomposable in $\overline{\mathcal{C}_n}$. An indecomposable $B=\{b_0,b_1\}$ in $\overline{\mathcal{C}_n}$ 
(where $b_0\in (c_0 ,c_1]$) 
is such that $\overline{\mathcal{C}_n}(B,C)\cong \field$ if and only if 
$b_0\in(c_0^+,c_1]$ and $b_1\in(c_1^+,c_0]$. 
Moreover, for such a $B$, the indecomposables $S=\{s_0,s_1\}$ such that a non-zero morphism $B\rightarrow C$ factors through $S$ are exactly those having one endpoint in $[b_0,c_1]$ and the other in $[b_1,c_0]$. See the right picture in Figure~\ref{fig:fig_morphisms_from_to_x} for an illustration. 
\end{enumerate}
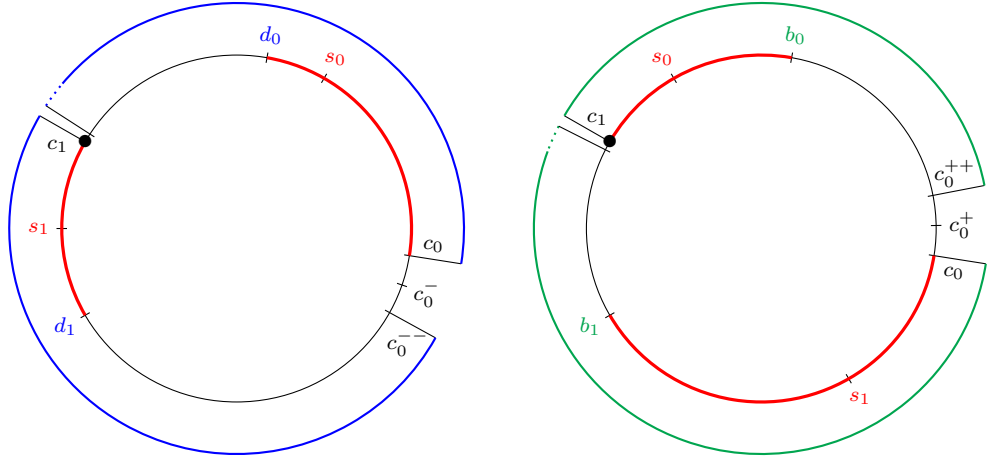
\begin{figure}[ht]
\centering
\begin{tikzpicture}[scale=2.3]
\begin{scope}
   \draw (0,0) circle (1cm);
   
   \draw (-9:0.97cm) -- (-9:1.3cm);
   \draw (-5:1.13cm) node{$\scriptstyle c_{0}$};
   \draw (150:0.97cm) -- (150:1.3cm);
   \draw (155:1.13cm) node{$\scriptstyle c_{1}$};
   
   \draw (-19:0.97cm) -- (-19:1.03);
   \draw (-19:1.15cm) node{$\scriptstyle c_{0}^{-}$}; 
   \draw (-29:0.97cm) -- (-29:1.3);
   \draw (-34:1.18cm) node{$\scriptstyle c_{0}^{--}$};
   
   \draw (80:0.97cm) -- (80:1.03cm);
   \draw [blue] (80:1.13cm) node{$\scriptstyle d_{0}$};
   \draw (-150:0.97cm) -- (-150:1.03cm);
   \draw [blue] (-150:1.13cm) node{$\scriptstyle d_{1}$};
   
   \draw (60:0.97cm) -- (60:1.03);
   \draw [red] (60:1.13cm) node{$\scriptstyle s_{0}$};
   \draw (180:0.97cm) -- (180:1.03);
   \draw [red] (180:1.13cm) node{$\scriptstyle s_{1}$}; 
   
   \draw[very thick, red] ([shift=(80:1cm)]0,0) arc (80:-9:1cm);
   \draw[very thick, red] ([shift=(-210:1cm)]0,0) arc (-210:-150:1cm);   
   \draw (150:1cm) node{$\bullet$};
   
    \draw (147:0.97cm) -- (147:1.3);
    
  
   \draw[thick, blue] ([shift=(140:1.3cm)]0,0) arc (140:-9:1.3cm);
   \draw[thick, dotted, blue] ([shift=(147:1.3cm)]0,0) arc (147:140:1.3cm);
   \draw[thick, blue] ([shift=(-210:1.3cm)]0,0) arc (-210:-29:1.3cm);
\end{scope}
\begin{scope}[xshift=3cm]
    \draw (0,0) circle (1cm);
   
   \draw (-9:0.97cm) -- (-9:1.3cm);
   \draw (-13:1.13cm) node{$\scriptstyle c_{0}$};
   \draw (150:0.97cm) -- (150:1.3cm);
   \draw (146:1.13cm) node{$\scriptstyle c_{1}$};
   
   \draw (1:0.97cm) -- (1:1.03);
   \draw (1:1.15cm) node{$\scriptstyle c_{0}^{+}$}; 
   \draw (11:0.97cm) -- (11:1.3);
   \draw (16:1.15cm) node{$\scriptstyle c_{0}^{++}$};
   
   \draw (80:0.97cm) -- (80:1.03cm);
   \draw [Green] (80:1.13cm) node{$\scriptstyle b_{0}$};
   \draw (-150:0.97cm) -- (-150:1.03cm);
   \draw [Green] (-150:1.13cm) node{$\scriptstyle b_{1}$};

  
   \draw[thick, Green] ([shift=(150:1.3cm)]0,0) arc (150:11:1.3cm);
   \draw[thick, Green] ([shift=(160:1.3cm)]0,0) arc (160:351:1.3cm);
   \draw[thick, dotted, Green] ([shift=(153:1.3cm)]0,0) arc (153:160:1.3cm);
   \draw (153:0.97cm) -- (153:1.3);
   
   \draw (-60:0.97cm) -- (-60:1.03);
   \draw [red] (-60:1.13cm) node{$\scriptstyle s_{1}$};
   \draw (120:0.97cm) -- (120:1.03);
   \draw [red] (120:1.13cm) node{$\scriptstyle s_{0}$}; 
   
   \draw[very thick, red] ([shift=(-9:1cm)]0,0) arc (-9:-150:1cm);
   \draw[very thick, red] ([shift=(80:1cm)]0,0) arc (80:150:1cm); 
   \draw (150:1cm) node{$\bullet$};
\end{scope}
\end{tikzpicture} 
\caption{Morphism $C\rightarrow D$ on the left and $B\rightarrow C$ on the right, and their factorisations through $S$. In the illustrated case, $c_1$ is an accumulation point, while $c_0$ is not. 
The blue arcs in the left picture indicate the different intervals for the endpoints of $D$; 
the green arcs in the right picture  indicate the different intervals for the endpoints of $B$. 
The dots in the coloured arcs indicate the interval approaches $c_1$ without reaching it.}
\label{fig:fig_morphisms_from_to_x}
\end{figure}

Analogously to $\mathcal{C}_n$ (see \cref{thm:CT-subcats-for-Cn}), cluster tilting subcategories of $\overline{\mathcal{C}_n}$ correspond to certain triangulations of $(S,\overline{M_n})$. 
By a \emph{triangulation} we again mean a maximal set of arcs that pairwise do not cross transversely.
In the following, note that $\cat{C}_n$ can be seen as a  triangulated subcategory of $\overline{\cat{C}_n}$.

\begin{prop}[{\cite[Thm.~4.4]{PaquetteYildirim}}]
\label{prop_ct_PY}
The additive subcategory $\cat{X}\sse\overline{\mathcal{C}_n}$ given by a set $\cat{X}$ of arcs in $(S,\overline{M_n})$ 
is cluster tilting if and only if the following conditions hold:
   \begin{enumerate}
       \item $\mathcal{X}\cap \cat{C}_n$ is a triangulation of $(S, M_n)$;
       
       \item there is a two-sided fountain (in the sense of \cite[Def.~4.3(3)]{PaquetteYildirim}) in $\mathcal{X}\cap \cat{C}_n$ at every accumulation point; and 
       
       \item the subcategory of $\overline{\mathcal{C}_n}$ additively generated by $\mathcal{X}$ is the geometric completion of $\mathcal{X}\cap \cat{C}_n$, that is, the additive subcategory of $\overline{\mathcal{C}_n}$ generated by the arcs in $\mathcal{X}\cap \cat{C}_n$ plus those corresponding to accumulation arcs of $\mathcal{X}\cap \cat{C}_n$.
   \end{enumerate}
   In other words, cluster tilting subcategories of $\overline{\mathcal{C}_n}$ correspond to triangulations of $(S, M_n)$ that have two-sided fountain at each accumulation point, plus one limit arc for every two-sided fountain. 
\end{prop}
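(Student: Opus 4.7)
The plan is to establish the two directions of the biconditional separately, building on the analogous classification in $\cat{C}_n$ (\cref{thm:CT-subcats-for-Cn}) and on the explicit Hom- and Ext-descriptions recorded in properties \ref{homspaces}--\ref{morph_to_C}. The Verdier quotient description $\overline{\cat{C}_n} = \cat{C}_{2n}/\cat{D}$ allows one to split any set of arcs $\cat{X}$ in $(S,\overline{M_n})$ into its non-limit part $\cat{X}\cap\cat{C}_n$ and its limit arcs, which is the essential structural decomposition I would work with throughout.

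For the forward direction, suppose $\cat{X}$ is cluster tilting. Rigidity $\Ext^1_{\overline{\cat{C}_n}}(\cat{X},\cat{X})=0$ combined with \ref{homspaces} immediately forces any two non-limit arcs in $\cat{X}$ to not cross transversely, so $\cat{X}\cap\cat{C}_n$ is non-crossing in $(S,M_n)$. Using the cluster tilting condition $\Ext^1_{\overline{\cat{C}_n}}(\cat{X},C)=0 \Rightarrow C\in\cat{X}$, I would show that $\cat{X}\cap\cat{C}_n$ is in fact a maximal non-crossing set, i.e.\ a triangulation of $(S,M_n)$, and that every accumulation point $p$ carries a two-sided fountain in $\cat{X}\cap\cat{C}_n$: if not, one can exhibit a non-limit arc near $p$ that fails to cross anything in $\cat{X}$, contradicting maximality. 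The limit arcs contained in $\cat{X}$ are then forced by rigidity together with \ref{homspaces}\ref{one-endpoint} and \ref{homspaces}\ref{equal} to be precisely the geometric completions of the two-sided fountains, yielding condition (3).

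For the converse, suppose $\cat{X}$ satisfies (1)--(3). Rigidity is a direct check using \ref{homspaces}: non-limit arcs in $\cat{X}$ do not cross by (1), and (3) combined with the fountain condition (2) ensures the limit arcs of $\cat{X}$ are compatible via \ref{homspaces}\ref{one-endpoint} and \ref{homspaces}\ref{equal}. The key step is then to verify the maximality condition: given an indecomposable $C\in\overline{\cat{C}_n}$ with $\Ext^1_{\overline{\cat{C}_n}}(\cat{X},C)=0$, I would distinguish two cases. If $C\in\cat{C}_n$, the hypothesis together with the fountain condition (2) lets me invoke \cref{thm:CT-subcats-for-Cn} applied to $\cat{X}\cap\cat{C}_n$ to conclude $C\in\cat{X}\cap\cat{C}_n$. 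If $C$ is a limit arc, I would use \ref{homspaces}\ref{one-endpoint}--\ref{equal} to show that the pattern of Ext-vanishing of $C$ against the fountain arcs at each accumulation endpoint of $C$ pins down $C$ as the unique limit arc compatible with that fountain, hence $C\in\cat{X}$ by (3). Contravariant and covariant finiteness of $\cat{X}$ would then follow from constructing explicit approximations combinatorially using \ref{morph_from_C} and \ref{morph_to_C}.

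The main obstacle is the limit-arc case in the converse: because $\overline{\cat{C}_n}$ is \emph{not} $2$-Calabi-Yau and does not admit a Serre functor, one cannot freely exchange $\Ext^1_{\overline{\cat{C}_n}}(\cat{X},C)$ for $\Ext^1_{\overline{\cat{C}_n}}(C,\cat{X})$ via duality. Instead, one must carefully track both sides through the asymmetric anticlockwise condition of \ref{homspaces}\ref{one-endpoint}, and use this asymmetry together with the \emph{two-sided} fountain hypothesis in (2) to rule out spurious ``one-sided fountain'' limit arcs that would otherwise be rigid against $\cat{X}$ without lying in it.
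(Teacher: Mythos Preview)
The paper does not contain a proof of this proposition: it is quoted verbatim from \cite[Thm.~4.4]{PaquetteYildirim} and stated without argument, serving only as background for Section~\ref{sec:index-in-cluster-cats}. There is therefore nothing in the present paper to compare your proposal against.

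That said, your outline is a reasonable sketch of how the result is established in \cite{PaquetteYildirim}. The split into non-limit and limit arcs, the use of \ref{homspaces} to control rigidity, and the appeal to \cref{thm:CT-subcats-for-Cn} for the non-limit part are all the right moves. Your identification of the main difficulty---that the failure of $2$-Calabi--Yau duality in $\overline{\cat{C}_n}$ means one cannot symmetrise $\Ext^1(\cat{X},C)$ and $\Ext^1(C,\cat{X})$, so the two-sidedness of the fountain in condition (2) must be used directly via the asymmetric description \ref{homspaces}\ref{one-endpoint}---is exactly the subtlety that distinguishes the completed case from the uncompleted one. If you want to verify the details, consult \cite[Sec.~4]{PaquetteYildirim} directly.
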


\section{The index in the completed discrete cluster category}
\label{sec:index-in-cluster-cats}

A substantial difference between $\mathcal{C}_n$ and $\ol{\cat{C}_n}$ is that many triangulations in $(S,\overline{M_n})$ correspond to subcategories of $\overline{\mathcal{C}_n}$ that are not rigid, and hence also not cluster tilting. 
For instance, two arcs meeting at an accumulation point do not cross transversely, and so may well be part of a triangulation (see \cref{exam:1-accum-pt,exam:2-accum-pt}), but the corresponding subcategory will not be rigid by \ref{homspaces}. 
This lack of rigidity is not a problem for us, however, since the index we introduce in \cref{def:A} is defined with respect to \emph{any} contravariantly finite, additive subcategory.

For a fan triangulation $\cat{X}$ (see \cref{def:fan}) of $(S,\overline{M_n})$, we prove 
over Sections~\ref{sec:index-for-fan}--\ref{sec:mutation} that 
the index with respect to $\cat{X}$ 
behaves much like the classical index. In particular, the index 
takes values in $K_{0}^{\sp}(\cat{X})$ (\cref{thm:fan-index-isom}) and 
determines rigid objects (\cref{thm:index_det_rigid}).
The index on rigid objects satisfies sign-coherence and linear independence properties (\cref{prop:rigid} and \cref{prop:linearly-independent}). 
Moreover, if $\cat{X}$ is also rigid, then we examine how the index behaves under mutation (\cref{thm:mutation}). 
\cref{sec:examples} contains examples.


\subsection{Fan triangulations}
\label{sec:fan-triangulations}

The proofs of our main results in \cref{sec:index-in-cluster-cats} rely on having a triangulation $\cat{X}$ of $(S,\overline{M_n})$ that gives a contravariantly finite subcategory (rigid or not) in $\ol{\cat{C}_n}$. 
As we will show in this subsection, these kinds of  triangulations must be fan triangulations, which have already played an important role in e.g.\ \cite{CF,CKP,GHJ,PaquetteYildirim}.

\begin{defn}[{\cite[Defs.~2.16 and 3.30]{CF}}, {\cite[Def.~4.1]{CKP}}, {\cite[Def.~4.3(4)]{PaquetteYildirim}}]
\label{def:fan}
    In $(S,\overline{M_n})$, an \emph{infinite leapfrog} 
    consists of two sets $\{A_i\}_{i\in I}$ and $\{B_i\}_{i\in I}$ of arcs, indexed by a set $I\in\Set{\mathbb{Z}, \mathbb{Z}_{\leq 0},\mathbb{Z}_{\geq 0}}$, such that: 
    \begin{itemize}
        \item all arcs from $\{A_i,B_i\}_{i\in I}$ are distinct and pairwise do not cross;
        \item each $A_i$ is incident with one endpoint of $B_i$ and, if $i-1\in I$, also one endpoint of $B_{i-1}$;
        \item each $B_i$ is incident with one endpoint of $A_i$ and, if $i+1\in I$, also one endpoint of $A_{i+1}$; and 
        \item there is a curve $\gamma$ in $S$ (not necessarily between marked points) that crosses all the arcs $A_i$ and $B_i$. 
    \end{itemize}
    A \emph{fan triangulation} of $(S,\overline{M_n})$ is a triangulation with no infinite leapfrogs. 
    See \cite[Figs.~7 and 8]{CKP} for illustrations of these concepts.
\end{defn}

The following result uses Franchini's full classification \cite[Thm.~5.4]{Franchini} of contravariantly (resp.\ covariantly) finite subcategories in $\ol{\cat{C}_n}$ and a nice result \cite[Prop.~4.5]{CKP} of \c{C}anak\c{c}\i--Kalck--Pressland on fan triangulations. 

\begin{lem}
\label{lem:precovering-triangulation-is-fan}

Suppose $\cat{X}$ is a triangulation of $(S,\overline{M_n})$. Then the following are equivalent.
\begin{enumerate}[label={(\arabic*)}]
    \item\label{item:func-finite} $\cat{X}\sse\ol{\cat{C}_n}$ is functorially finite.
    \item\label{item:contra-finite} $\cat{X}\sse\ol{\cat{C}_n}$ is contravariantly finite.
    \item\label{item:co-finite} $\cat{X}\sse\ol{\cat{C}_n}$ is covariantly finite.
    \item\label{item:fan} $\cat{X}$ is  a fan triangulation.
\end{enumerate}
\end{lem}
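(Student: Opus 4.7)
The plan is to prove the trivial implications $(1)\Rightarrow(2)$ and $(1)\Rightarrow(3)$ directly from the definition of functorial finiteness, and then close the loop by showing $(2)\Rightarrow(4)$, $(3)\Rightarrow(4)$ and $(4)\Rightarrow(1)$. The main tools are Franchini's classification \cite[Thm.~5.4]{Franchini} of contravariantly (resp.\ covariantly) finite subcategories of $\ol{\cat{C}_n}$ and \c{C}anak\c{c}\i--Kalck--Pressland's structural result \cite[Prop.~4.5]{CKP} on fan triangulations.

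For $(2)\Rightarrow(4)$ and $(3)\Rightarrow(4)$, I would argue by contrapositive. Suppose $\cat{X}$ is a triangulation containing an infinite leapfrog $\{A_i,B_i\}_{i\in I}$ as in \cref{def:fan}. The curve $\gamma$ witnessing the leapfrog must accumulate, producing a limit arc $Y\in\ol{\cat{C}_n}$ whose endpoints are the accumulation points of the $A_i$ and $B_i$. Using the morphism descriptions \ref{homspaces}--\ref{morph_to_C}, cofinally many arcs of the leapfrog admit non-zero morphisms to $Y$ that do not factor through any finite subfamily, and this is precisely the obstruction to the existence of a right $\cat{X}$-approximation of $Y$. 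A symmetric argument rules out a left approximation, so $\cat{X}$ is neither contravariantly nor covariantly finite. This step can equivalently be read off directly from \cite[Thm.~5.4]{Franchini}, whose combinatorial criterion excludes triangulations with infinite leapfrogs.

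For $(4)\Rightarrow(1)$, the strategy is to use \cite[Prop.~4.5]{CKP} to obtain a workable local picture of $\cat{X}$: a fan triangulation consists of a fan of arcs at each accumulation point together with the limit arcs between accumulation points and the arcs in the ``finite'' part. Given any indecomposable $Y\in\ol{\cat{C}_n}$, I would construct a right $\cat{X}$-approximation by taking, at each endpoint of $Y$, the boundary arcs of the (finitely many) subfans of $\cat{X}$ through which every morphism $X\to Y$ with $X\in\cat{X}$ must factor, using the descriptions in \ref{morph_from_C} and \ref{morph_to_C}, and dually for left approximations. Equivalently, one verifies that the combinatorial shape of a fan triangulation satisfies Franchini's criterion \cite[Thm.~5.4]{Franchini} for both contravariant and covariant finiteness, yielding $(4)\Rightarrow(1)$ at once.

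The main obstacle is the bookkeeping around accumulation points: one must translate between the three different languages (the absence of infinite leapfrogs, the local fan structure from \cite[Prop.~4.5]{CKP}, and the combinatorial classification in \cite[Thm.~5.4]{Franchini}). The hardest case is when an accumulation point carries a one-sided fountain but no fountain on the other side, where careful case analysis using property~\ref{triangle-one-acc-pt} is needed to ensure that no hidden approximation failure arises from morphisms through the limit arcs at that point.
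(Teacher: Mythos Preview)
Your proposal is correct and uses the same two external inputs as the paper---Franchini's classification \cite[Thm.~5.4]{Franchini} and the \c{C}anak\c{c}\i--Kalck--Pressland results \cite{CKP}---so the approaches are essentially the same. The paper's argument is more streamlined: for $(2)\Rightarrow(4)$ it observes that Franchini's condition $(\ol{\mathrm{PC}}2)$ applied to an infinite leapfrog would force transversely crossing arcs in the triangulation, an immediate contradiction, rather than constructing an explicit object without approximation; and for $(4)\Rightarrow(2)$ it simply cites \cite[proof of Thm.~4.6]{CKP}, bypassing the hands-on construction and case analysis you outline.
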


\begin{proof}
We prove only \ref{item:contra-finite} $\Leftrightarrow$ \ref{item:fan}; the equivalence \ref{item:co-finite} $\Leftrightarrow$ \ref{item:fan} is dual. It follows that \ref{item:func-finite} and \ref{item:fan} are equivalent.

Assume first that the triangulation $\cat{X}$ gives a contravariantly finite subcategory $\cat{X}$. 
By \cite[Thm.~5.4]{Franchini}, we know $\cat{X}$ satisfies the completed precovering conditions 
$(\ol{\mathrm{PC}}1)$,
$(\ol{\mathrm{PC}}2)$, 
$(\ol{\mathrm{PC}}2')$, 
$(\ol{\mathrm{PC}}3)$ and 
$(\ol{\mathrm{PC}}3')$ 
given in \cite[Def.~5.3]{Franchini}. 
If $\cat{X}$ were to contain an infinite leapfrog, then $(\ol{\mathrm{PC}}2)$ implies $\cat{X}$ would contain arcs that cross transversely, which is impossible as $\cat{X}$ is a triangulation. 
Thus, $\cat{X}$ is a fan triangulation.

The converse is shown in \cite[the proof of Thm.~4.6]{CKP}; it can also be done directly using the completed precovering conditions. 
\end{proof}

\subsection{The index with respect to a fan triangulation}
\label{sec:index-for-fan}

Throughout \cref{sec:index-for-fan,sec:index-determines-rigids,sec:signs-independence-basis,sec:mutation}, we assume the following.

\begin{setup}\label{setup:cluster-category}
Let $\overline{\cat{C}_{n}}$ be a completed discrete cluster category of type $\BA$ with geometric model $(S,\overline{M_n})$.
Let $\cat{X}$ be a fan triangulation of $(S,\overline{M_n})$. 
\end{setup}

Recall that $\overline{\cat{C}_{n}}$ is a small triangulated category with split idempotents. 
And, by \cref{lem:precovering-triangulation-is-fan}, $\cat{X}$ is a contravariantly finite, additive subcategory of $\overline{\cat{C}_{n}}$. 

As before, $(\overline{\cat{C}_n},\BE,\fs)$ will denote the extriangulated category corresponding to the triangulated category $\overline{\cat{C}_n}$ with suspension $\sus$. 
The index $\indxx{\cat{X}}(-)=[-]_{\cat{X}}$ with respect to $\cat{X}$ takes values in $K_{0}(\overline{\cat{C}_n},\BE_{\cat{X}},\fs_{\cat{X}})$. 
In this section, we will exhibit several nice properties of this index. The first phenomenon in this case is that the index essentially takes values in $K^{\sp}_{0}(\cat{X})$, in which classically we would expect values of the index to lie.

\begin{thm}[=\cref{thm:B}]
\label{thm:fan-index-isom}
There is an isomorphism 
\begin{align*}
    K_0(\overline{\cat{C}_n},\BE_{\cat{X}},\fs_{\cat{X}}) 
    &\overset{\cong}{\longleftrightarrow}
    K^{\sp}_0(\cat{X})
    \\ 
    [C]_{\cat{X}}
    &\longmapsto [X_{0}]^{\sp} - [X_{1}]^{\sp}
    \\
    [X]_{\cat{X}}
    &\longmapsfrom [X]^{\sp},
\end{align*}
where 
$\begin{tikzcd}[column sep=0.5cm, cramped]
X_{1} 
    \arrow{r}
& X_{0} 
    \arrow{r}
& C 
    \arrow[dashed]{r}
& {}
\end{tikzcd}$
is an $\fs_{\cat{X}}$-triangle with $X_{i}\in\cat{X}$.
\end{thm}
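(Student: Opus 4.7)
The plan is to construct both maps directly and verify they are mutual inverses. The crucial technical ingredient is the existence of \emph{$\fs_{\cat{X}}$-resolutions}.

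First, I establish that every $C \in \overline{\cat{C}_n}$ admits an $\fs_{\cat{X}}$-triangle $X_1 \to X_0 \to C \dashrightarrow$ with $X_0, X_1 \in \cat{X}$. By \cref{lem:precovering-triangulation-is-fan}, $\cat{X}$ is contravariantly finite, so there is a right $\cat{X}$-approximation $x_0 : X_0 \to C$; completing to a triangle $Y \to X_0 \to C \xrightarrow{c} \sus Y$ in $\overline{\cat{C}_n}$, any $f : X \to C$ with $X \in \cat{X}$ factors through $x_0$, so $c f = 0$, and the triangle is an $\fs_{\cat{X}}$-triangle by \eqref{eqn:relative-yoneda}. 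I expect the principal obstacle to be showing $Y \in \cat{X}$, which I would handle by an explicit geometric case analysis on the arc $C = \{c_0, c_1\}$, using Franchini's description of right $\cat{X}$-approximations (underlying \cref{lem:precovering-triangulation-is-fan}), the Hom-space calculations of \cref{sec:completed-cluster-cat}, and the absence of infinite leapfrogs in the fan triangulation $\cat{X}$.

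Next, I define the maps. Set $\psi([X]^{\sp}) \deff [X]_{\cat{X}}$; this is well-defined because split short exact sequences in $\cat{X}$ yield $\fs_{\cat{X}}$-triangles in $\overline{\cat{C}_n}$. On generators, define $\phi([C]_{\cat{X}}) \deff [X_0]^{\sp} - [X_1]^{\sp}$ using any resolution from the first step. Independence of $\phi$ from this choice follows by a Schanuel-style comparison: given two $\fs_{\cat{X}}$-resolutions of $C$, the identity on $C$ lifts along both right $\cat{X}$-approximations, producing a commutative diagram from which the equality of alternating sums in $K^{\sp}_{0}(\cat{X})$ follows by standard diagram chasing. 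Compatibility of $\phi$ with the defining relations --- namely, for each $\fs_{\cat{X}}$-triangle $A \to B \to C \dashrightarrow$, the vanishing $\phi([A]_{\cat{X}}) - \phi([B]_{\cat{X}}) + \phi([C]_{\cat{X}}) = 0$ --- follows from the octahedron axiom, which splices $\fs_{\cat{X}}$-resolutions of $A$ and $C$ into one of $B$.

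Finally, to verify mutual inverses: for $X \in \cat{X}$, the triangle $0 \to X \xrightarrow{\mathrm{id}} X \to 0$ is a valid $\fs_{\cat{X}}$-resolution, so $\phi(\psi([X]^{\sp})) = [X]^{\sp}$. Conversely, for any $\fs_{\cat{X}}$-resolution $X_1 \to X_0 \to C \dashrightarrow$, the defining relation in $K_0(\overline{\cat{C}_n}, \BE_{\cat{X}}, \fs_{\cat{X}})$ gives $[C]_{\cat{X}} = [X_0]_{\cat{X}} - [X_1]_{\cat{X}}$, whence $\psi(\phi([C]_{\cat{X}})) = [C]_{\cat{X}}$, completing the proof.
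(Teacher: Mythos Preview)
Your overall strategy is sound and would yield a correct proof, but it diverges from the paper's in two places worth noting.

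First, for the key lemma that the cocone $Y$ of a right $\cat{X}$-approximation lies in $\cat{X}$, you propose a direct geometric case analysis on the arc $C$. The paper (\cref{lem:approximation}) instead argues by contradiction without cases: if an indecomposable summand $B$ of $Y$ crossed some $X\in\cat{X}$ transversely, the long exact $\Hom$-sequence of the triangle together with $\restr{\overline{\cat{C}_n}(-,c)}{\cat{X}}=0$ forces a nonzero composite $X\to\sus B\to\sus X_0$ to land in $\overline{\cat{C}_n}(X,\sus Y')$ for some indecomposable $Y'\in\cat{X}$; since $X,Y'$ lie in a triangulation they do not cross transversely, and the factorisation constraints from \ref{morph_from_C} then pin down $B$ to share an accumulation endpoint with $X$, contradicting transversality. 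This is considerably shorter than a full case analysis and does not rely on Franchini's explicit description of approximations.

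Second, for the isomorphism itself you build $\phi$ and $\psi$ by hand via a Schanuel comparison and an octahedron-based Horseshoe argument. The paper instead verifies that $\cat{X}$ is the class of $\BE_{\cat{X}}$-projectives (so the inherited structure on $\cat{X}$ is split and $K_0(\cat{X},\restr{\BE_{\cat{X}}}{\cat{X}},\restr{\fs_{\cat{X}}}{\cat{X}})=K^{\sp}_0(\cat{X})$), that $\cat{X}$ is closed under cocones of $\fs_{\cat{X}}$-deflations, and that every object has an $\cat{X}$-resolution of length~$1$; it then invokes the general resolution theorem \cite[Thm.~4.5]{Ogawa-Shah-resolution}. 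Your direct construction is essentially the content of that cited theorem specialised to resolutions of length~$1$, so the two routes are equivalent in substance; yours is more self-contained, the paper's more modular.
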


\begin{notation}
Recall that $\indxx{\cat{X}}(C)$ denotes the class $[C]_{\cat{X}}$ in $K_0(\overline{\cat{C}_n},\BE_{\cat{X}},\fs_{\cat{X}})$.  
In light of \cref{thm:fan-index-isom}, in Section \ref{sec:index-in-cluster-cats} we will also denote the element $[X_{0}]^{\sp} - [X_{1}]^{\sp} \in K^{\sp}_0(\cat{X})$ by $\indxx{\cat{X}}(C)$ whenever  
$\begin{tikzcd}[column sep=0.5cm, cramped]
X_{1} 
    \arrow{r}
& X_{0} 
    \arrow{r}
& C 
    \arrow[dashed]{r}
& {}
\end{tikzcd}$
is an $\fs_{\cat{X}}$-triangle with $X_{i}\in\cat{X}$. 
Note that this assignment induces a group homomorphism 
$\indxx{\cat{X}}(-) \colon K^{\sp}_0(\ol{\cat{C}_n}) \to K^{\sp}_0(\cat{X})$.
\end{notation}

\begin{rem}
The Grothendieck groups of the completed and non-completed discrete cluster categories of type $\BA$ have been computed by Murphy in \cite[Thms.~1.1 and 1.2]{Murphy}.
\end{rem}

We need the next preparatory lemma to prove the above result.

\begin{lem}\label{lem:approximation}
For each object $C\in\overline{\cat{C}_n}$, there is a triangle 
$\begin{tikzcd}[column sep=0.5cm, cramped]
X_{1} 
    \arrow{r}{x}
& X_{0} 
    \arrow{r}{y}
& C 
    \arrow{r}{z}
& \sus X_{1}
\end{tikzcd}$
in $\overline{\cat{C}_n}$, where: 
\begin{enumerate}[label={(\roman*)}]
    \item $y$ is a (minimal) right $\cat{X}$-approximation of $C$; 
    \item $X_{1}\in\cat{X}$; and 
    \item $z\in\BE_{\cat{X}}(C,X_{1})$.
\end{enumerate}
\end{lem}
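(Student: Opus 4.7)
The plan is to invoke contravariant finiteness of $\cat{X}$, provided by \cref{lem:precovering-triangulation-is-fan}, to produce a right $\cat{X}$-approximation $y\colon X_{0}\to C$, complete it to a triangle, and then verify the three listed conditions. Since $\overline{\cat{C}_{n}}$ is Hom-finite and Krull--Schmidt, we may take $y$ to be minimal. Completing $y$ to a triangle
\[
X_{1}\xrightarrow{x} X_{0}\xrightarrow{y} C \xrightarrow{z} \sus X_{1}
\]
immediately yields (i).

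Condition (iii) is then essentially formal. Given any $X\in\cat{X}$ and any morphism $x'\colon X\to C$, the approximation property factors $x'=yf$ for some $f\colon X\to X_{0}$, and so $zx'=zyf=0$ because $zy=0$. Hence $\restr{\overline{\cat{C}_{n}}(-,z)}{\cat{X}}=0$, which by \eqref{eqn:relative-yoneda} says precisely $z\in\BE_{\cat{X}}(C,X_{1})$.

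The main challenge is condition (ii), namely $X_{1}\in\cat{X}$. The approach is geometric. If $C\in\cat{X}$, we can take $y=\id{C}$ and $X_{1}=0$, so assume $C\notin\cat{X}$. I would then use the descriptions of morphisms into $C$ provided by \ref{morph_from_C} and \ref{morph_to_C} to pin down the indecomposable summands of a minimal $X_{0}$: these are the arcs of $\cat{X}$ determined by the positions of $c_{0}$ and $c_{1}$ relative to the fan structure of the triangulation. Properties \ref{triangles-transverse} and \ref{triangle-one-acc-pt} then let one read off the arcs contributing to $X_{1}$ directly from these configurations, by decomposing the approximation into the standard triangles prescribed in Figure~\ref{fig:figtriangleIV}. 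Crucially, the ``other ends'' of these triangles are again arcs of the triangulation, because a fan triangulation of $(S,\overline{M_n})$ is closed under the geometric operation of passing to the opposite side of a crossing or of reassigning an endpoint along a fan.

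The principal obstacle I anticipate is the case where $C$ has an accumulation point as an endpoint, or where $c_{0}$ or $c_{1}$ lies at a point with infinitely many arcs of $\cat{X}$ converging to it. Here one must exploit the fan property—the absence of infinite leapfrogs in $\cat{X}$—to ensure simultaneously that the minimal approximation $X_{0}$ is a \emph{finite} direct sum and that the resulting $X_{1}$ is built from arcs belonging to $\cat{X}$ rather than drifting to a limit arc outside the triangulation. The relevant finiteness here is exactly what was used in the proof of \cref{lem:precovering-triangulation-is-fan}, via Franchini's precovering conditions.
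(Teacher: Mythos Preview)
Your handling of (i) and (iii) is correct and matches the paper exactly.

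For (ii), your approach diverges from the paper's. You propose to compute the minimal approximation $X_{0}$ and the fibre $X_{1}$ explicitly, case by case according to the position of the endpoints of $C$ relative to the fan structure. The paper instead argues by contradiction, and the argument is uniform. Suppose some indecomposable summand $B$ of $X_{1}$ crosses an arc $X\in\cat{X}$ transversely, so there is a nonzero $b\colon X\to\sus B$. Composing with the inclusion $\sus B\hookrightarrow\sus X_{1}$ gives a nonzero $a\colon X\to\sus X_{1}$. Since $\restr{\overline{\cat{C}_n}(-,z)}{\cat{X}}=0$ (this is your (iii)), the long exact sequence of the triangle forces $\overline{\cat{C}_n}(X,\sus x)$ to be injective, so $(\sus x)a\neq 0$, and hence some indecomposable summand $\sus Y$ of $\sus X_{0}$ receives a nonzero map from $X$ factoring through $\sus B$. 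Now $X,Y\in\cat{X}$ cannot cross transversely because $\cat{X}$ is a triangulation, so by \ref{homspaces} one of the two accumulation-point cases applies; the factorisation description \ref{morph_from_C} then pins down $\sus B$ and forces $B$ not to cross $X$ transversely after all.

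This sidesteps all of your anticipated casework: it never identifies $X_{0}$ explicitly and never treats limit-arc endpoints separately. Your direct computation might succeed, but as written it is only a plan. The central assertion---that ``the other ends of these triangles are again arcs of the triangulation''---is precisely the content of (ii), and you have not established it; nor have you said how the triangles from \ref{triangles-transverse} and \ref{triangle-one-acc-pt} for the individual summands of $X_{0}$ assemble into the single triangle $X_{1}\to X_{0}\to C\to\sus X_{1}$. The difficulty with accumulation points that you flag is real, and it is exactly what the paper's argument avoids.
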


\begin{proof}
Let $C\in\overline{\cat{C}_n}$ be an arbitrary object. 
As $\cat{X}$ is contravariantly finite, we may take a right $\cat{X}$-approximation $y\colon X_{0} \to C$ of $C$ and complete it to the following triangle in $\ol{\cat{C}_n}$.
\begin{equation}\label{eqn:approx-triangle-for-C}
\begin{tikzcd}
A
    \arrow{r}{x}
& X_{0} 
    \arrow{r}{y}
& C 
    \arrow{r}{z}
& \sus A
\end{tikzcd}
\end{equation}
(Note that we can take $y$ to be right minimal by \cite[Cor.~1.4]{KrauseSaorin-minimal-approximations-of-modules} since $\ol{\cat{C}_n}$ is Krull-Schmidt, but this is not necessary for the arguments in the rest of this proof.) 
Since $y$ is a right $\cat{X}$-approximation, we have 
$\restr{\ol{\cat{C}_n}(-,z)}{\cat{X}} = 0$ and hence $z\in\BE_{\cat{X}}(C,A)$ by \eqref{eqn:relative-yoneda}. 

It suffices to show $A\in\cat{X}$ for then we may put $X_{1} \deff A$. 
Since $\cat{X}$ is a triangulation, it is enough to show $A$ does not cross $\cat{X}$ transversely. 
Thus, assume for contradiction that an indecomposable direct summand $B$ of $A$ does cross some arc $X\in\cat{X}$ in this way.
This means $\ol{\cat{C}_n}(X,\sus B) \neq 0$ (see \ref{homspaces} from \cref{sec:completed-cluster-cat}), and so let $b\colon X \to \sus B$ be a non-zero morphism. 
Post-composing with the canonical inclusion $\iota \colon \sus B \into \sus A$, we obtain a non-zero morphism $a \deff \iota b\in\ol{\cat{C}_n}(X,\sus A)$. 
Since \eqref{eqn:approx-triangle-for-C} is a triangle in $\ol{\cat{C}_n}$, we have the following exact sequence in which the rightmost homomorphism is injective.
\[
\begin{tikzcd}[column sep=2cm]
\ol{\cat{C}_n}(X,X_{0})
    \arrow{r}{\ol{\cat{C}_n}(X,y)}
& \ol{\cat{C}_n}(X,C)
    \arrow{r}{\ol{\cat{C}_n}(X,z) = 0}
&[0.2cm] \ol{\cat{C}_n}(X,\sus A)
    \arrow[hook]{r}{\ol{\cat{C}_n}(X,\sus x)}
& \ol{\cat{C}_n}(X,\sus X_{0})
\end{tikzcd}
\]
In particular, the composition $(\sus x) a \colon X \to \sus X_{0}$ is non-zero. Then there is an indecomposable direct summand $\sus Y$ of $\sus X_{0}$, with canonical retract $\pi \colon \sus X_{0} \onto \sus Y$, such that
$\pi (\sus x) a 
    \colon X \to \sus Y
$ 
is non-zero. 
This gives the following commutative diagram.
\[
\begin{tikzcd}
X 
    \arrow{rr}{\pi (\sus x)a \neq 0}
    \arrow{dr}[swap]{b}
&& \sus Y 
\\
& \sus B 
    \arrow{ur}[swap]{\pi(\sus x)\iota}
&
\end{tikzcd}
\]

Since both $X$ and $Y$ are part of the triangulation $\cat{X}$ and 
$\ol{\cat{C}_n}(X,\sus Y) \neq 0$, by \ref{homspaces} we must have either:
\begin{itemize}
    \item $\{y_0, p\} = Y \neq X = \{x_0, p\}$ but share exactly one accumulation point $p$ as an endpoint, and we can move $x_0$ anticlockwise along $\partial S$ to reach $y_0$ without passing through $p$; or
    \item $Y = X = \{x_0, x_1\}$ has both endpoints as accumulation points.
\end{itemize}
In the first case, using \ref{morph_from_C} and that $\pi(\sus x)a \colon X \to \sus Y$ factors through $\sus B$, we must have that $\sus B$ also has one endpoint at the accumulation point $p$ and its other endpoint lies in $[x_0, y^{-}_0]$. That is, $B = \{b_0,p\}$ with $b_0 \in [x^{+}_{0},y_{0}]$. But then $B$ does not cross $X$ transversely, which contradicts our assumption. 
Hence, we are in the second case and it follows that 
$B = \sus B = Y = X \in \cat{X}$. But then $B$ and $X$ do not cross transversely, a contradiction. 
Therefore, no direct summand of $A$ crosses $\cat{X}$ transversely, so by maximality of the triangulation $\cat{X}$ we must have $A\in\cat{X}$ and we are done.
\end{proof}

We are ready to prove \cref{thm:fan-index-isom}.

\begin{proof}[Proof of \cref{thm:fan-index-isom}]
We wish to apply \cite[Thm.~4.5]{Ogawa-Shah-resolution} to the small extriangulated category $(\ol{\cat{C}_n},\BE_{\cat{X}},\fs_{\cat{X}})$ and the additive subcategory $\cat{X}\sse\ol{\cat{C}_n}$. 

First, we note that $\cat{X}$ is precisely the subcategory of $\BE_{\cat{X}}$-projectives 
in $(\ol{\cat{C}_n},\BE_{\cat{X}},\fs_{\cat{X}})$ 
(see \cite[Def.~3.23]{NakaokaPalu-extriangulated-categories-hovey-twin-cotorsion-pairs-and-model-structures}), and hence $\cat{X}$ is extension-closed in $(\ol{\cat{C}_n},\BE_{\cat{X}},\fs_{\cat{X}})$. 
Thus, by \cite[Rem.~2.18]{NakaokaPalu-extriangulated-categories-hovey-twin-cotorsion-pairs-and-model-structures}, there is an extriangulated category 
$(\cat{X},\restr{\BE_{\cat{X}}}{\cat{X}},\restr{\fs_{\cat{X}}}{\cat{X}})$ arising from 
$\cat{X}$ inheriting an extriangulated structure from $(\ol{\cat{C}_n},\BE_{\cat{X}},\fs_{\cat{X}})$. 
But this structure just corresponds to the split exact structure on $\cat{X}$ as $\restr{\BE_{\cat{X}}}{\cat{X}} = 0$. In particular, 
$K_{0}(\cat{X},\restr{\BE_{\cat{X}}}{\cat{X}},\restr{\fs_{\cat{X}}}{\cat{X}}) = K^{\sp}_{0}(\cat{X})$ is just the split Grothendieck group of $\cat{X}$.

Second, if $y\colon X \to Y$ is an $\fs_{\cat{X}}$-deflation with $X,Y\in\cat{X}$, then it is a split epimorphism as $Y$ is $\BE_{\cat{X}}$-projective. Moreover, the cocone of $y$ is thus a direct summand of $X$ and hence also lies in $\cat{X}$. That is, $\cat{X}$ is closed under cocones of $\fs_{\cat{X}}$-deflations.

\cref{lem:approximation} tells us that each object $C\in\ol{\cat{C}_n}$ admits an $\cat{X}$-resolution of length $1$ in $(\ol{\cat{C}_n},\BE_{\cat{X}},\fs_{\cat{X}})$ (see \cite[Def.~4.2]{Ogawa-Shah-resolution}). 
Hence, the hypotheses of \cite[Thm.~4.5]{Ogawa-Shah-resolution} are met and the claim follows.
\end{proof}

\begin{rem}
Given that a fan triangulation $\cat{X}$ is functorially finite in $\ol{\cat{C}_n}$ (see \cref{lem:precovering-triangulation-is-fan}) and becomes \emph{rigid} in the extriangulated category $(\ol{\cat{C}_n},\BE_{\cat{X}},\fs_{\cat{X}})$ (i.e.\ $\BE_{\cat{X}}(\cat{X},\cat{X}) = 0$), it is natural to ask if $\cat{X}$ is cluster tilting in this extriangulated category (see \cite[Def.~5.3]{LiuNakaoka}).
Although we always have
$
\cat{X}
     = \set{A\in\ol{\cat{C}_n} | \BE_{\cat{X}}(A,\cat{X}) = 0}
$
using \cref{lem:approximation}, 
the set 
$
\set{A\in\ol{\cat{C}_n} | \BE_{\cat{X}}(\cat{X},A) = 0}
$
will never be equal to $\cat{X}$. Indeed, since $\cat{X}$ is the subcategory of $\BE_{\cat{X}}$-projectives, this latter set is just all of $\ol{\cat{C}_n}$. 
\end{rem}

\subsection{The index determines a rigid object}
\label{sec:index-determines-rigids}

In the remainder \cref{sec:index-in-cluster-cats}, we establish results inspired by Dehy--Keller \cite[Sec.~2]{DehyKeller-On-the-combinatorics-of-rigid-objects-in-2-Calabi-Yau-categories}. 
We still work under \cref{setup:cluster-category}.

\begin{lem}
\label{lem:no-common-summands}
Suppose $C\in\ol{\cat{C}_n}$ is a rigid object and let  
$\begin{tikzcd}[column sep=0.5cm, cramped]
X_{1} 
    \arrow{r}{x}
& X_{0} 
    \arrow{r}{y}
& C 
    \arrow{r}{z}
& \sus X_{1}
\end{tikzcd}$
be a triangle in $\overline{\cat{C}_n}$, where $y$ is a minimal $\cat{X}$-approximation of $C$. 
Then, even up to isomorphism, $X_0$ and $X_1$ have no direct summands in common.
\end{lem}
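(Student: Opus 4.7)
The plan is to argue by contradiction.  Suppose some indecomposable object $X \in \ol{\cat{C}_n}$ is a direct summand of both $X_0$ and $X_1$, and fix decompositions $X_0 = X \oplus X_0'$ and $X_1 = X \oplus X_1'$ with corresponding split inclusions $\iota_i, \iota_i'$ and projections $\pi_i, \pi_i'$ for $i = 0, 1$.  I would analyse $x \colon X_1 \to X_0$ in block form with respect to these decompositions, focusing on the diagonal component $a \deff \pi_0 \circ x \circ \iota_1 \colon X \to X$.

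The first step is to use the right minimality of $y$ to restrict the behaviour of $a$.  If $a$ were an isomorphism, then an elementary block-diagonalisation using suitable automorphisms of $X_0$ and $X_1$ would transform $x$ into $1_X \oplus x''$ for some $x'' \colon X_1' \to X_0'$.  The triangle would then decompose as the direct sum of a trivial triangle on $X$ and the triangle $X_1' \to X_0' \to C \to \sus X_1'$, so $y$ would factor through $X_0'$, and the non-invertible idempotent $\iota_0' \pi_0' \colon X_0 \to X_0$ would satisfy $y \circ (\iota_0' \pi_0') = y$, contradicting the right minimality of $y$.  Hence $a$ is not invertible; since $X$ is indecomposable in the Krull-Schmidt, Hom-finite category $\ol{\cat{C}_n}$, the ring $\End X$ is local and finite-dimensional over $\field$, so $a$ lies in $\rad \End X$ and is nilpotent.

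The second step is to derive a contradiction from this nilpotence using the rigidity of $C$, i.e.\ $\ol{\cat{C}_n}(C, \sus C) = 0$.  I would leverage the geometric model of $\ol{\cat{C}_n}$: since $C$ is rigid, its indecomposable summands form a collection of arcs satisfying the non-crossing and accumulation-point conditions of \ref{homspaces}, and since $\cat{X}$ is a fan triangulation, \cref{lem:approximation} together with the morphism-factorisation descriptions \ref{morph_from_C} and \ref{morph_to_C} allow one to describe the minimal right $\cat{X}$-approximation $y$ of $C$ explicitly in terms of the arcs of $\cat{X}$ neighbouring each arc of $C$.  A direct case analysis of these arcs, treating separately the limit-arc cases at accumulation points, should then preclude the existence of any common indecomposable summand of $X_0$ and $X_1$, contradicting the nilpotence of $a$.

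I expect the principal obstacle to be the second step.  In the classical cluster-tilting, $2$-Calabi-Yau setting one could combine $\Ext^{1}(\cat{X},\cat{X}) = 0$ with Serre duality to derive the contradiction purely homologically; here $\cat{X}$ is typically not rigid in $\ol{\cat{C}_n}$ and $\ol{\cat{C}_n}$ admits no Serre functor, so the homological shortcut is unavailable and a careful combinatorial inspection via the geometric model is required, with particular care for limit arcs at the accumulation points.
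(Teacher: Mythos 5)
There is a genuine gap, and it sits exactly where you predicted: your second step is not an argument but a deferral. After concluding that the diagonal component $a=\pi_{0}x\iota_{1}$ of $x$ is radical (which is automatic once $x$ is radical, as right minimality of $y$ already forces), you assert that ``a direct case analysis of these arcs \dots should then preclude the existence of any common indecomposable summand of $X_{0}$ and $X_{1}$''. But that is precisely the statement of the lemma; no combinatorial analysis is actually carried out, and it is not even clear what would contradict the nilpotence of $a$ --- a radical map $x\colon X_{1}\to X_{0}$ is perfectly compatible with $X_{0}$ and $X_{1}$ sharing a summand, so the information extracted in your first step cannot by itself produce the contradiction. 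The missing idea is to upgrade from ``$x$ is radical'' to ``\emph{every} morphism $X_{1}\to X_{0}$ is radical'': if $X$ were a common summand, the composite of the split projection $X_{1}\onto X$ with the split inclusion $X\into X_{0}$ would be a non-radical morphism $X_{1}\to X_{0}$, and that is the contradiction one needs.

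Moreover, your diagnosis that the homological route is blocked by the absence of Serre duality and the non-rigidity of $\cat{X}$ is not correct; the paper's proof is purely homological and uses neither. Given any $f\colon X_{1}\to X_{0}$, rigidity of $C$ makes $-yf(\sus^{-1}z)\colon\sus^{-1}C\to C$ vanish, so $-f\sus^{-1}z$ factors as $xe$ through $x$; completing to a morphism of triangles and using that $y$ is a right $\cat{X}$-approximation produces $h_{0}$ with $g=yh_{0}$, whence $y(h_{0}x-f)=0$ and so $f=h_{0}x-xh_{1}$ for some $h_{1}$. Since $x$ is radical (by right minimality in the Krull--Schmidt category $\ol{\cat{C}_n}$), every such $f$ is radical, and the lemma follows from the observation above. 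So the geometric model, the fan-triangulation structure of $\cat{X}$, and \cref{lem:approximation} are not needed here at all; your proposal replaces a short homological argument by an unexecuted combinatorial one and therefore does not constitute a proof as it stands.
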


\begin{proof}
First note that since $y$ is right minimal in the Krull-Schmidt category $\ol{\cat{C}_n}$, we see that $x \colon X_1 \to X_0$ is radical. 
Now suppose $f\colon X_1 \to X_0$ is any morphism in $\ol{\cat{C}_n}$. 
Since $C$ is rigid, the morphism $-y f (\sus^{-1}z) \colon \sus^{-1}C \to C$ vanishes, which yields a morphism $e\colon \sus^{-1}C \to X_1$ so that $-f\sus^{-1}z = xe$. The axioms of a triangulated category yield the following morphism of triangles.
\begin{equation}\label{eqn:radical-triangles}
\begin{tikzcd}[column sep=1.5cm]
\sus^{-1}C 
    \arrow{r}{-\sus^{-1}z}
    \arrow{d}{e}
& X_1 
    \arrow{r}{x}
    \arrow{d}{f}
    \arrow[dotted]{dl}[swap]{h_1}
& X_0 
    \arrow{r}{y}
    \arrow{d}{g}
    \arrow[dotted]{dl}[swap]{h_0}
& C 
    \arrow{d}{\sus e}
\\
X_1 
    \arrow{r}{x}
& X_0 
    \arrow{r}{y}
& C
    \arrow{r}{z}
& \sus X_1
\end{tikzcd}
\end{equation}
Since $y$ is a right $\cat{X}$-approximation, there exists $h_0\colon X_0 \to X_0$ such that $g = yh_0$. 
Then $y(h_0 x - f) = 0$ implies the existence of a morphism $h_1\colon X_1 \to X_1$ with $f = h_0 x - x h_1$ (see \eqref{eqn:radical-triangles}). 
Since $x$ is radical, we see that $f$ must be radical too. The assertion follows as each component of $f$ between indecomposable summands must be radical by \cite[Prop.~1.1(a)]{Bautista} and a non-isomorphism by \cite[Prop.~2.1(b)]{Bautista}.
\end{proof}

In light of \cref{lem:no-common-summands}, one may follow the proof of \cite[Thm.~2.3]{DehyKeller-On-the-combinatorics-of-rigid-objects-in-2-Calabi-Yau-categories} to see that a rigid object $C$ in $\ol{\cat{C}_n}$ is determined by its index up to isomorphism.

\begin{thm}[=\cref{thm:C}]
\label{thm:index_det_rigid}
There is an injection from the set of isomorphism classes of rigid objects in $\ol{\cat{C}_n}$ to $K^{\sp}_{0}(\cat{X})$ given by 
$[C] \mapsto \indxx{\cat{X}}(C)$.
\end{thm}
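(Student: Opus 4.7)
The plan is to adapt the argument of \cite[Thm.~2.3]{DehyKeller-On-the-combinatorics-of-rigid-objects-in-2-Calabi-Yau-categories}, using \cref{thm:fan-index-isom} and \cref{lem:no-common-summands} as the two main inputs that replace, respectively, the classical identification of the index codomain and the corresponding ``no common summand'' statement obtained via 2-Calabi--Yauness.

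Suppose $C, C' \in \overline{\cat{C}_n}$ are rigid with $\indxx{\cat{X}}(C) = \indxx{\cat{X}}(C')$. By \cref{lem:approximation}, fix minimal $\cat{X}$-approximation triangles
\[
\begin{tikzcd}[column sep=0.5cm]
X_1 \arrow{r}{x} & X_0 \arrow{r}{y} & C \arrow{r}{z} & \sus X_1
\end{tikzcd}
\quad\text{and}\quad
\begin{tikzcd}[column sep=0.5cm]
X_1' \arrow{r}{x'} & X_0' \arrow{r}{y'} & C' \arrow{r}{z'} & \sus X_1'
\end{tikzcd}
\]
with $X_i, X_i' \in \cat{X}$. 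Under the isomorphism in \cref{thm:fan-index-isom}, the equality of indices rewrites as $[X_0]^{\sp} - [X_1]^{\sp} = [X_0']^{\sp} - [X_1']^{\sp}$ in $K^{\sp}_0(\cat{X})$, i.e.\ $[X_0 \oplus X_1']^{\sp} = [X_0' \oplus X_1]^{\sp}$. Since $\overline{\cat{C}_n}$ is Krull--Schmidt, this produces an isomorphism $X_0 \oplus X_1' \cong X_0' \oplus X_1$.

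Next I would combine this with \cref{lem:no-common-summands}: the pairs $(X_0, X_1)$ and $(X_0', X_1')$ each have disjoint indecomposable supports. A short multiplicity count, indecomposable by indecomposable of $\cat{X}$, then shows that each indecomposable summand must appear with the same multiplicity in $X_0$ and $X_0'$ (and analogously in $X_1$ and $X_1'$), since any overlap in the wrong places would violate the disjointness of supports. Hence $X_0 \cong X_0'$ and $X_1 \cong X_1'$ individually.

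The main obstacle is upgrading these summand identifications into the isomorphism $C \cong C'$. Here I would apply the Yoneda-type functor $\overline{\cat{C}_n}(-,?)|_{\cat{X}} \colon \overline{\cat{C}_n} \to \rmod{\cat{X}}$ to each approximation triangle: because $y$ and $y'$ are right $\cat{X}$-approximations, the maps induced by $z$ and $z'$ vanish on $\cat{X}$, so each triangle becomes a minimal projective presentation of $\overline{\cat{C}_n}(-,C)|_{\cat{X}}$ respectively $\overline{\cat{C}_n}(-,C')|_{\cat{X}}$. The previously obtained matching of projective covers $X_0 \cong X_0'$ and of first syzygies $X_1 \cong X_1'$, combined with the rigidity of $C$ and $C'$ and right minimality of $y, y'$, allows one to adjust the summand isomorphisms so they intertwine $x$ and $x'$; the triangle axioms then produce a morphism of triangles whose component $C \to C'$ is forced to be an isomorphism. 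This final lifting step is the delicate one --- here \cref{lem:no-common-summands} plays precisely the role that the 2-Calabi--Yau property plays in Dehy--Keller's original argument, and everything upstream of it is essentially mechanical.
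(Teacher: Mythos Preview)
Your proposal is correct and takes essentially the same approach as the paper: both invoke \cref{lem:no-common-summands} as the replacement for the $2$-Calabi--Yau input and then follow \cite[Thm.~2.3]{DehyKeller-On-the-combinatorics-of-rigid-objects-in-2-Calabi-Yau-categories}. The paper's own proof is in fact the single sentence ``In light of \cref{lem:no-common-summands}, one may follow the proof of \cite[Thm.~2.3]{DehyKeller-On-the-combinatorics-of-rigid-objects-in-2-Calabi-Yau-categories}'', so your sketch is already more detailed than what appears there; the factorisation property established \emph{inside} the proof of \cref{lem:no-common-summands} (every $f\colon X_1\to X_0$ is of the form $h_0x - xh_1$) is precisely what makes your ``delicate'' lifting step go through.
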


\subsection{Sign-coherence, linear independence and bases}
\label{sec:signs-independence-basis}

We still work under \cref{setup:cluster-category}. 
In the rest, we fix the canonical basis of $K^{\sp}_{0}(\cat{X})$ given by the isomorphism classes of the indecomposables in $\cat{X}$. The next result means that the indices of the direct summands of a rigid object all lie in the same hyperquadrant of $K^{\sp}_{0}(\cat{X})$. The proof is the same as in \cite[Sec.~2.4]{DehyKeller-On-the-combinatorics-of-rigid-objects-in-2-Calabi-Yau-categories}; the definition of \emph{sign-coherent} can be found therein.

\begin{thm}[=\cref{thm:D}]
\label{prop:rigid}
Suppose $C\in\ol{\cat{C}_n}$ is rigid and that $U,V$ are direct summands of $C$. Then $\{ \indxx{\cat{X}}(U), \indxx{\cat{X}}(V) \}$ is a sign-coherent subset of $K^{\sp}_{0}(\cat{X})$. 
\end{thm}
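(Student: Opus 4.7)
The plan is to emulate the Dehy--Keller strategy~\cite[Sec.~2.4]{DehyKeller-On-the-combinatorics-of-rigid-objects-in-2-Calabi-Yau-categories}, with Lemma~\ref{lem:no-common-summands} as the key input. First I would note that $W \deff U \oplus V$ is also rigid, being a direct summand of $C$. Applying Lemma~\ref{lem:approximation} to each of $U$, $V$, $W$ produces minimal right $\cat{X}$-approximation triangles
\begin{equation*}
X_{1}^{\bullet} \xrightarrow{x_{\bullet}} X_{0}^{\bullet} \xrightarrow{y_{\bullet}} \bullet \to \sus X_{1}^{\bullet},
\qquad \bullet \in \{U, V, W\},
\end{equation*}
with all $X_{i}^{\bullet} \in \cat{X}$. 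Under the isomorphism of Theorem~\ref{thm:fan-index-isom}, the indices are $\indxx{\cat{X}}(\bullet) = [X_{0}^{\bullet}]^{\sp} - [X_{1}^{\bullet}]^{\sp}$, and Lemma~\ref{lem:no-common-summands} ensures that each pair $(X_{0}^{\bullet}, X_{1}^{\bullet})$ shares no indecomposable summand. Moreover, since $y_U \oplus y_V$ is a (not necessarily minimal) right $\cat{X}$-approximation of $W$ with co-cone $X_{1}^{U} \oplus X_{1}^{V}$, a standard Krull-Schmidt comparison with the minimal approximation $y_W$ yields isomorphisms $X_{0}^{U} \oplus X_{0}^{V} \cong X_{0}^{W} \oplus T$ and $X_{1}^{U} \oplus X_{1}^{V} \cong X_{1}^{W} \oplus T$ for some $T \in \cat{X}$, with $y_U \oplus y_V$ corresponding to $(y_W, 0)$.

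Next I would argue by contradiction: suppose for some indecomposable $X \in \cat{X}$ the coefficients of $[X]^{\sp}$ in $\indxx{\cat{X}}(U)$ and $\indxx{\cat{X}}(V)$ have strictly opposite signs. Lemma~\ref{lem:no-common-summands} applied to $U$ and $V$ implies (up to swapping) that $X$ appears in $X_{0}^{U}$ with multiplicity $a > 0$ and not in $X_{1}^{U}$, while $X$ appears in $X_{1}^{V}$ with multiplicity $d > 0$ and not in $X_{0}^{V}$. Comparing multiplicities of $X$ across the isomorphisms above, together with Lemma~\ref{lem:no-common-summands} for $W$ (so that $X_{0}^{W}$ and $X_{1}^{W}$ cannot both contain $X$), forces the multiplicity of $X$ in $T$ to be $t = \min(a, d) > 0$.

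To produce the contradiction, I would exploit that the $X$-isotypic part of $X_{0}^{U} \oplus X_{0}^{V}$ lies entirely inside $X_{0}^{U}$ (because $X_{0}^{V}$ has no $X$-summand). It follows that the $X^{t}$-summand of $T$ embeds, via the iso $X_{0}^{W} \oplus T \cong X_{0}^{U} \oplus X_{0}^{V}$, as a split $X^{t}$-summand of $X_{0}^{U}$; call this split monomorphism $f_U \colon X^{t} \hookrightarrow X_{0}^{U}$. Since the composite $T \to W$ vanishes, $y_U f_U = 0$, so $f_U$ factors through $x_U \colon X_{1}^{U} \to X_{0}^{U}$, which is radical by the minimality of $y_U$ (cf.\ the proof of Lemma~\ref{lem:no-common-summands}). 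Any morphism $X^{t} \to X_{1}^{U}$ is radical since $X_{1}^{U}$ has no $X$-summand; hence $f_U$ is radical, contradicting its being a split monomorphism. A symmetric argument rules out the opposite sign assignment. The main obstacle lies in this final dichotomy, and in particular in pinpointing how the $X^{t}$-part of $T$ can be located as a split summand of $X_{0}^{U}$; once this is done, the combination of Lemma~\ref{lem:no-common-summands} (applied to $U$, $V$, and $W$) and the standard radical calculus in a Krull-Schmidt category delivers the contradiction.
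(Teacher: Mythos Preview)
Your proposal is correct and follows essentially the same approach as the paper, which simply defers to \cite[Sec.~2.4]{DehyKeller-On-the-combinatorics-of-rigid-objects-in-2-Calabi-Yau-categories}; the only imprecision is that $W = U\oplus V$ is a summand of $C\oplus C$ rather than of $C$, but $C\oplus C$ is rigid so the conclusion stands. The one step worth making fully explicit is why $f_U$ is a split monomorphism: from the splitting $(g_U,g_V)(f_U,f_V)=\id_{X^t}$ and the fact that $f_V\colon X^t\to X_0^V$ is radical (as $X_0^V$ has no $X$-summand), one gets $g_U f_U = \id_{X^t} - g_V f_V$ invertible in the semiperfect ring $\End(X^t)$, whence $f_U$ admits a retraction.
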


Following the argument in \cite[Sec.~2.5]{DehyKeller-On-the-combinatorics-of-rigid-objects-in-2-Calabi-Yau-categories}, we observe that the indices of direct summands of a rigid object are linearly independent.

\begin{thm}[=\cref{thm:E}]
\label{prop:linearly-independent}
Suppose $C$ is a rigid object in $\ol{\cat{C}_n}$ and let $\cat{U}$ be a finite set of indecomposable, pairwise non-isomorphic direct summands of $C$. 
Then the set 
$
\set{ \indxx{\cat{X}}(U) | U\in\cat{U} }
$
is linearly independent in $K^{\sp}_{0}(\cat{X})$.
\end{thm}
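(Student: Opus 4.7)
The plan is to follow the Dehy--Keller strategy from \cite[Sec.~2.5]{DehyKeller-On-the-combinatorics-of-rigid-objects-in-2-Calabi-Yau-categories}, using \cref{thm:index_det_rigid} in place of the Dehy--Keller recovery theorem for rigid objects. The key ingredient is the additivity of $\indxx{\cat{X}}$: as noted immediately after \cref{thm:fan-index-isom}, the index descends to a group homomorphism $K^{\sp}_{0}(\ol{\cat{C}_n}) \to K^{\sp}_{0}(\cat{X})$, so it turns direct sums into sums of classes.

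Suppose one is given a relation $\sum_{U\in\cat{U}} n_{U}\,\indxx{\cat{X}}(U) = 0$ with $n_{U}\in\BZ$. First I would partition $\cat{U}$ into $\cat{U}_{+} \deff \set{U\in\cat{U} | n_{U}>0}$ and $\cat{U}_{-} \deff \set{U\in\cat{U} | n_{U}<0}$ (discarding the indices with $n_{U}=0$), and form
\[
  P \deff \bigoplus_{U\in\cat{U}_{+}} U^{n_{U}},
  \qquad
  N \deff \bigoplus_{U\in\cat{U}_{-}} U^{-n_{U}}.
\]
Additivity of $\indxx{\cat{X}}$ then rewrites the hypothesis as $\indxx{\cat{X}}(P) = \indxx{\cat{X}}(N)$ in $K^{\sp}_{0}(\cat{X})$. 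The next step is to observe that both $P$ and $N$ are rigid in $\ol{\cat{C}_n}$: each $U\in\cat{U}$ is a direct summand of the rigid object $C$, so $\ol{\cat{C}_n}(U,\sus U') = 0$ for all $U,U'\in\cat{U}$, whence $\ol{\cat{C}_n}(P,\sus P) = 0$ and $\ol{\cat{C}_n}(N,\sus N) = 0$ by bi-additivity of the Hom bifunctor. Applying \cref{thm:index_det_rigid} to $P$ and $N$ now yields an isomorphism $P \cong N$ in $\ol{\cat{C}_n}$.

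Finally, because $\ol{\cat{C}_n}$ is Krull--Schmidt, each indecomposable appears with a uniquely determined multiplicity in any direct-sum decomposition. Since the elements of $\cat{U}$ are pairwise non-isomorphic and $\cat{U}_{+}\cap\cat{U}_{-}=\emptyset$ by construction, no indecomposable summand of $P$ is isomorphic to any indecomposable summand of $N$; matching multiplicities on the two sides therefore forces $n_{U}=0$ for every $U\in\cat{U}_{+}\cup\cat{U}_{-}$, completing the argument. I expect the rigidity check for $P$ and $N$ to be the only subtle step, but it is immediate once one notes that rigidity of $C$ transfers to every direct sum built from summands of $C$; with that in hand, the conclusion follows by a routine Krull--Schmidt cancellation plus one application of \cref{thm:index_det_rigid}.
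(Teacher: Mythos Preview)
Your argument is correct and is precisely the Dehy--Keller strategy from \cite[Sec.~2.5]{DehyKeller-On-the-combinatorics-of-rigid-objects-in-2-Calabi-Yau-categories} that the paper itself invokes without spelling out. The paper's own treatment of this result consists solely of the sentence ``Following the argument in \cite[Sec.~2.5]{DehyKeller-On-the-combinatorics-of-rigid-objects-in-2-Calabi-Yau-categories}, we observe that the indices of direct summands of a rigid object are linearly independent'', so your proposal is in fact a faithful elaboration of exactly that cited proof.
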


\subsection{Rigid fan triangulations and mutation}
\label{sec:mutation}

We still work under \cref{setup:cluster-category}. 
For the analogue of \cite[Thm.~2.4]{DehyKeller-On-the-combinatorics-of-rigid-objects-in-2-Calabi-Yau-categories}, we need to assume the fan triangulations in consideration are also rigid (see \cref{exam:2-accum-pt}). Although this is not true in general (see \cref{exam:1-accum-pt,exam:2-accum-pt}), such examples do exist 
(see e.g.\ \cite[Fig.~7]{PaquetteYildirim}).

\begin{thm}
Suppose $\cat{X},\cat{Y}$ are rigid, fan triangulations of $(S,\ol{M_n})$. 
Then $\cat{X},\cat{Y}$ are cluster tilting in the triangulated category $\ol{\cat{C}_n}$ and 
the set 
$
\set{ \indxx{\cat{X}}(Y) | Y\in\cat{Y} }
$
is also a basis of $K^{\sp}_{0}(\cat{X})$.
\end{thm}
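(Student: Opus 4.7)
The approach is to follow the template of \cite[Thm.~2.4]{DehyKeller-On-the-combinatorics-of-rigid-objects-in-2-Calabi-Yau-categories}, in two stages.

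First, I would verify that a rigid fan triangulation is cluster tilting in $\ol{\cat{C}_n}$. By \cref{lem:precovering-triangulation-is-fan}, $\cat{X}$ is functorially finite; combining this with the rigidity hypothesis, the approximation triangle of \cref{lem:approximation}, and the geometric description of Hom-spaces in \ref{homspaces}--\ref{morph_to_C}, one checks the conditions of \cref{prop_ct_PY}. Concretely, rigidity rules out the forbidden accumulation-point configurations, so the non-accumulation part of $\cat{X}$ is a triangulation of $(S, M_n)$ with the required fountain structure and $\cat{X}$ is its geometric completion; the same applies to $\cat{Y}$.

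For the basis claim, linear independence of $\set{\indxx{\cat{X}}(Y) | Y \in \cat{Y}}$ in $K^{\sp}_{0}(\cat{X})$ is immediate from \cref{prop:linearly-independent}, since $\cat{Y}$ is rigid and linear dependence is a finite phenomenon. For spanning, I would fix an indecomposable $X \in \cat{X}$ and apply \cref{lem:approximation} with $\cat{Y}$ in place of $\cat{X}$ to produce a triangle
\[
\begin{tikzcd}[column sep=0.6cm]
Y_{1} \arrow{r} & Y_{0} \arrow{r}{y} & X \arrow{r}{c} & \sus Y_{1}
\end{tikzcd}
\]
with $Y_{i} \in \cat{Y}$. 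If this triangle is also an $\fs_{\cat{X}}$-triangle, then \cref{thm:fan-index-isom} gives $[X]^{\sp} = \indxx{\cat{X}}(Y_{0}) - \indxx{\cat{X}}(Y_{1})$ in $K^{\sp}_{0}(\cat{X})$, which is the desired finite $\BZ$-linear expression of the generator $[X]^{\sp}$.

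The main obstacle is thus showing $c \in \BE_{\cat{X}}(X, Y_{1})$, i.e.\ $\restr{\ol{\cat{C}_n}(-,c)}{\cat{X}} = 0$. One natural route is a direct geometric argument: since $\cat{X}$ and $\cat{Y}$ are both maximal non-crossing collections satisfying \cref{prop_ct_PY}, every morphism $X' \to X$ with $X' \in \cat{X}$ should factor through the $\cat{Y}$-approximation $y$, forcing the composition with $c$ to vanish. If this direct verification proves delicate, a more flexible alternative is to invoke \cref{thm:F} for the above triangle, obtaining
\[
[X]^{\sp} = \indxx{\cat{X}}(Y_{0}) - \indxx{\cat{X}}(Y_{1}) + \theta_{\cat{X}}([\Im\yoneda _{\cat{X}}c]),
\]
and then argue, by an induction on a suitable combinatorial complexity of $\Im\yoneda _{\cat{X}}c$, that the error term itself lies in the $\BZ$-span of $\set{\indxx{\cat{X}}(Y) | Y \in \cat{Y}}$, completing the proof.
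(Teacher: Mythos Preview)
The paper's proof is far shorter than what you outline. It simply observes that a rigid fan triangulation satisfies the combinatorial characterisation in \cref{prop_ct_PY} and is therefore cluster tilting; then, since for a cluster tilting subcategory the index $\indxx{\cat{X}}$ agrees with the classical Palu--Dehy--Keller index (as recalled in the Introduction after \cref{def:A}), the basis claim is obtained by \emph{citing} \cite[Thm.~2.4]{DehyKeller-On-the-combinatorics-of-rigid-objects-in-2-Calabi-Yau-categories} directly, not by reproving it. Your verification of cluster tilting via \cref{lem:approximation} and the Hom combinatorics is unnecessarily roundabout: \cref{prop_ct_PY} is a purely combinatorial criterion on the arcs, and the point is just that rigidity forces the required fountain behaviour at the accumulation points.

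More seriously, your spanning argument has a genuine gap. You need $c \in \BE_{\cat{X}}(X, Y_1)$, i.e.\ that every morphism $X' \to X$ with $X'\in\cat{X}$ factors through the right $\cat{Y}$-approximation $y$. Nothing proved in the paper gives this: $y$ is a $\cat{Y}$-approximation, not an $\cat{X}$-approximation, and the two triangulations can be quite different. Your proposed geometric argument (``every morphism should factor'') is an assertion, not a proof. The fallback via \cref{thm:F} is also not a proof: you obtain an error term $\theta_{\cat{X}}([\Im\yoneda_{\cat{X}}c])$ in $K^{\sp}_{0}(\cat{X})$, but there is no evident well-founded ``combinatorial complexity'' on which to induct that would express this error in terms of the $\indxx{\cat{X}}(Y)$. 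The insight you are missing is precisely the one the paper uses: once $\cat{X}$ and $\cat{Y}$ are cluster tilting, you are back in the classical setting and Dehy--Keller's own argument (which proceeds via mutation, not via a single approximation triangle) applies wholesale.
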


\begin{proof}
Note that any rigid, fan triangulation is cluster tilting by \cref{prop_ct_PY}. 
Consequently, note that $\indxx{\cat{X}}(-)$ is the same index considered in \cite{DehyKeller-On-the-combinatorics-of-rigid-objects-in-2-Calabi-Yau-categories}. Thus, the last assertion follows from \cite[Thm.~2.4]{DehyKeller-On-the-combinatorics-of-rigid-objects-in-2-Calabi-Yau-categories}.
\end{proof}

In line with \cite[Def.~5.1]{CKP}, 
we say that a triangulation $\cat{Y}$ of $(S,\ol{M_n})$ is \emph{the mutation of $\cat{X}$ at an arc $X\in\cat{X}$} if there is a unique arc $Y\in\cat{Y}$ such that $X\neq Y$ and $\cat{Y} = (\cat{X}\setminus \{X\})\cup \{Y\}$. 
Note that this definition only requires $\cat{X}$ to be a triangulation of $(S,\ol{M_n})$. 
By \cite[Thm.~3]{CKP}, the arcs $X$ and $Y$ must be the diagonal of a quadrilateral with edges in $\cat{X} \cap \cat{Y}$ or homotopic to an unmarked boundary segment, as indicated in Figure~\ref{fig:mutation}.
\begin{figure}[ht]
\centering
\begin{tikzpicture}[scale=2]
\begin{scope}
    \draw (0,0) circle (1cm); 
    
    \draw (45:0.97cm) edge node[above right, pos=0.3] {$\scriptstyle y_0$} (45:1.03cm);
    \draw (135:0.97cm) edge node[above left, pos=0.3] {$\scriptstyle x_1$} (135:1.03cm);
    \draw (225:0.97cm) edge node[below left, pos=0.3] {$\scriptstyle y_1$} (225:1.03cm);
    \draw (315:0.97cm) edge node[below right, pos=0.3] {$\scriptstyle x_0$} (315:1.03cm);

    \draw (315:1cm) edge node[right, pos=0.7] {$\scriptstyle X$}(135:1cm); 
    \draw[red] (225:1cm) edge node[left, pos=0.7, red] {$\scriptstyle Y$} (45:1cm); 
    
    \draw (135:1cm) edge node[right, pos=0.5] {$\scriptstyle T_1$}(225:1cm); 
    \draw (45:1cm) edge node[left, pos=0.5] {$\scriptstyle T_2$}(315:1cm); 
    \draw (45:1cm) edge node[below, pos=0.5] {$\scriptstyle S_1$}(135:1cm); 
    \draw (315:1cm) edge node[above, pos=0.5] {$\scriptstyle S_2$}(225:1cm); 
\end{scope}
\end{tikzpicture}
\caption{The arc $X\in\cat{X}$ and its flip $Y\in\cat{Y}\setminus\cat{X}$ lying as diagonals in a quadrilateral with edges in $\cat{X}\cap \cat{Y}$ or homotopic to an unmarked boundary segment. Note that the marked points $x_0,x_1,y_0,y_1$ must all be distinct.} 
    \label{fig:mutation}
\end{figure}
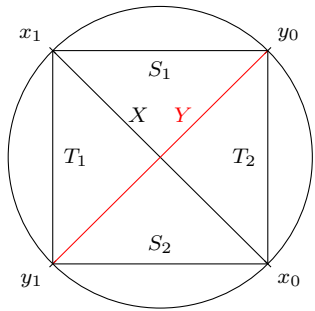
Moreover, by \ref{triangles-transverse}, there are the following exchange triangles.
\begin{equation}\label{eqn:exchange-triangles}
\begin{tikzcd}[column sep=0.5cm]
X
    \arrow{r}
& S_1 \oplus S_2
    \arrow{r}
& Y
    \arrow{r}
& \sus X
\end{tikzcd}
\hspace{0.5cm}
\text{and}
\hspace{0.5cm}
\begin{tikzcd}[column sep=0.5cm]
Y
    \arrow{r}
& T_1 \oplus T_2
    \arrow{r}
& X
    \arrow{r}
& \sus Y
\end{tikzcd}
\end{equation}
See also \cite[Rem.~5.4]{CKP}.

\begin{lem}\label{lem:mutation-of-rigid-fan-triangulation}
Suppose $\cat{Y}$ is the mutation of a triangulation $\cat{X}$. 
If $\cat{X}$ is rigid (resp.\ a fan triangulation), then so too is $\cat{Y}$. 
\end{lem}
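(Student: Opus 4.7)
The plan is to establish the two properties separately, each by a short combinatorial argument using the geometric model $(S,\ol{M_n})$ and the exchange quadrilateral of Figure~\ref{fig:mutation}.

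For rigidity, I would work through property \ref{homspaces} to check that no pair of arcs in $\cat{Y}$ has a non-zero $\Ext^{1}$. Transverse crossings (case \ref{transverse}) are ruled out by $\cat{Y}$ being a triangulation, and pairs drawn from $\cat{X}\cap\cat{Y}$ are handled by the rigidity of $\cat{X}$. The key step is to eliminate cases \ref{one-endpoint} and \ref{equal} for pairs involving the new arc $Y=\{y_0,y_1\}$; both of these cases require an accumulation endpoint of $Y$, so it suffices to show that neither $y_0$ nor $y_1$ is an accumulation point. Suppose for contradiction that $y_0$ is; then the quadrilateral edges $T_2=\{x_0,y_0\}$ and $S_1=\{y_0,x_1\}$ both lie in $\cat{X}$ and share the accumulation endpoint $y_0$. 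Since the corners of the quadrilateral appear on $\partial S$ in the cyclic anticlockwise order $x_0,y_0,x_1,y_1$, one can move $x_1$ anticlockwise along $\partial S$ to $x_0$ via the $y_1$-side without passing through $y_0$, and case \ref{one-endpoint} then produces a non-zero element of $\Ext^1(S_1,T_2)$, contradicting rigidity of $\cat{X}$. The symmetric argument rules out $y_1$, and rigidity of $\cat{Y}$ follows at once.

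For the fan property, I would argue by contradiction. Suppose $\cat{Y}$ contains an infinite leapfrog $L$ with witnessing curve $\gamma$. Because $\cat{Y}$ and $\cat{X}$ differ only by exchanging $X$ for $Y$, at most one arc of $L$ lies outside $\cat{X}$, namely $Y$ itself. If $Y\notin L$, then $L\sse\cat{X}$ already contradicts the fan assumption on $\cat{X}$. Otherwise, removing the single arc $Y$ breaks the alternating chain of $L$ at exactly one position, and at least one of the two resulting infinite sub-chains (re-indexed by $\BN$ if necessary) is again an infinite leapfrog, now contained in $\cat{X}\setminus\{X\}\sse\cat{X}$, with $\gamma$ still crossing all of its arcs. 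Either way, $\cat{X}$ admits an infinite leapfrog, contradicting the fan assumption.

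The main obstacle is the rigidity part: it requires careful tracking of the cyclic anticlockwise ordering of the four quadrilateral corners to pin down which direction of case \ref{one-endpoint} produces the non-trivial extension between the edges $S_1$ and $T_2$. The fan argument is routine by comparison, once one observes that deleting a single arc from an infinite leapfrog always leaves an infinite sub-leapfrog witnessed by the same curve $\gamma$.
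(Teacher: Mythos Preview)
Your proposal is correct and follows essentially the same strategy as the paper's proof. For rigidity, both you and the paper argue that an accumulation point among the quadrilateral corners would force a non-zero $\Ext^1$ between adjacent quadrilateral edges $S_i, T_j \in \cat{X}$, contradicting rigidity of $\cat{X}$; you do this only for $y_0,y_1$ (which is all that is needed, since $Y=\{y_0,y_1\}$ is the only new arc), while the paper states it for all four corners. For the fan property, both arguments observe that removing the single arc $Y$ from a putative infinite leapfrog in $\cat{Y}$ still leaves an infinite leapfrog (witnessed by the same curve $\gamma$) inside $\cat{X}$. One small point worth making explicit in your write-up: when you assert that $T_2$ and $S_1$ lie in $\cat{X}$, you are using that neither can be a trivial boundary segment---this is immediate once $y_0$ is an accumulation point, since infinitely many marked points then cluster on both sides of $y_0$.
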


\begin{proof}
Suppose $\cat{Y} = (\cat{X}\setminus \{X\})\cup \{Y\}$ is the mutation of $\cat{X}$ at an arc $X\in\cat{X}$ as depicted in Figure~\ref{fig:mutation}.

If $\cat{X}$ is rigid, then none of the distinct points $x_0,x_1,y_0,y_1$ can be accumulation points. Indeed, if $p \in \set{x_0,x_1,y_0,y_1}$ were an accumulation point, then there would be an extension between the arcs $S_i$ and $T_j$ $(i,j\in\{1,2\})$ that were incident with $p$. 
And, flipping $X$ to $Y$ does not introduce any transverse crossings, so $\cat{Y}$ must also be rigid.

Suppose now that $\cat{X}$ is a fan triangulation and assume for a contradiction that $\cat{Y}$ is not, that is, $\cat{Y}$ contains an infinite leapfrog $\cat{L}$. By Definition~\ref{def:fan}, there is a curve $\gamma$ in $S$ crossing all the infinitely many arcs in $\cat{L}$. Since $\cat{Y}$ and $\cat{X}$ only differ by one arc, such $\gamma$ also crosses an infinite set of consecutive arcs in $\cat{L}\cap\cat{X}$, giving an infinite leapfrog in $\cat{X}$. This is a contradiction to $\cat{X}$ being a fan triangulation. 
\end{proof}

By \cref{lem:mutation-of-rigid-fan-triangulation}, we know that the mutation $\cat{Y}$ of a (rigid) fan triangulation $\cat{X}$ is again a (rigid) fan triangulation. In particular, we may consider the two indices 
$\indxx{\cat{X}}(-)$
and 
$\indxx{\cat{Y}}(-)$ 
with respect to $\cat{X}$ and $\cat{Y}$, respectively. 
If \eqref{eqn:exchange-triangles} are the corresponding exchange triangles, then 
we need two group homomorphisms 
\mbox{$\phi,\psi\colon K^{\sp}_0(\cat{X}) \to K^{\sp}_0(\cat{Y})$} to see how the index behaves under mutation (cf.\ \cite[Sec.~3]{DehyKeller-On-the-combinatorics-of-rigid-objects-in-2-Calabi-Yau-categories}). 
On a generator $[W]^{\sp} \in  K^{\sp}_0(\cat{X})$, we define: 
\[
\phi([W]^{\sp}) \deff
	\begin{cases}
		[W]^{\sp} & \text{if $W \not\cong X$}\\
		[T_{1}]^{\sp} + [T_{2}]^{\sp} - [Y]^{\sp} & \text{if $W = X$,}
	\end{cases}
\]
and 
\[
\psi([W]^{\sp}) \deff
	\begin{cases}
		[W]^{\sp} & \text{if $W \neq X$}\\
		[S_{1}]^{\sp} + [S_{2}]^{\sp} - [Y]^{\sp} & \text{if $W = X$.}
	\end{cases}
\]
For $C\in\ol{\cat{C}_n}$, we denote by 
$[ \indxx{\cat{X}}(C) \mathbin{:} X]$ the coefficient of $[X]^{\sp}$ in $\indxx{\cat{X}}(C)$ with respect to the basis of $K^{\sp}_{0}(\cat{X})$ given by the isomorphism classes of the indecomposables in $\cat{X}$. 

The last main result of this section is as follows, and the proof of \cite[Thm.~3.1]{DehyKeller-On-the-combinatorics-of-rigid-objects-in-2-Calabi-Yau-categories} carries over thanks to \cref{lem:no-common-summands}.

\begin{thm}\label{thm:mutation}
Suppose $\cat{X},\cat{Y}$ are rigid, fan triangulations of $(S,\ol{M_n})$ that are related by one flip with exchange triangles \eqref{eqn:exchange-triangles}. 
For a rigid object $C\in\ol{\cat{C}_n}$, we have 
\[
\indxx{\cat{Y}}(C) = 
	\begin{cases}
		\phi(\indxx{\cat{X}}(C)) & \text{if $[ \indxx{\cat{X}}(C) \mathbin{:} X] \geq 0$}\\
		\psi(\indxx{\cat{X}}(C)) & \text{if $[ \indxx{\cat{X}}(C) \mathbin{:} X] \leq 0$.}
	\end{cases}
\]
\end{thm}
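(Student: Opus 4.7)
The plan is to adapt the proof of the Dehy--Keller mutation formula \cite[Thm.~3.1]{DehyKeller-On-the-combinatorics-of-rigid-objects-in-2-Calabi-Yau-categories}, with \cref{lem:no-common-summands} playing the role of the 2-Calabi--Yau argument used there. Since $\cat{X}$ and $\cat{Y}$ are rigid fan triangulations, they are cluster tilting by \cref{prop_ct_PY}. Fix a rigid $C\in\ol{\cat{C}_n}$ and, by \cref{lem:approximation}, choose a triangle $X_1 \xrightarrow{x} X_0 \xrightarrow{y} C \to \sus X_1$ with $X_0,X_1\in\cat{X}$ and $y$ a minimal right $\cat{X}$-approximation, so $\indxx{\cat{X}}(C) = [X_0]^{\sp} - [X_1]^{\sp}$. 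Write $X_i = X^{m_i} \oplus X_i'$ with $X_i' \in \add(\cat{X}\cap\cat{Y})$. By \cref{lem:no-common-summands}, $X_0$ and $X_1$ share no indecomposable summand, so $m_0 m_1 = 0$, and $[\indxx{\cat{X}}(C) \mathbin{:} X] = m_0 - m_1$ is non-negative iff $m_1 = 0$ and non-positive iff $m_0 = 0$. The two stated cases are therefore exhaustive and compatible when $m_0 = m_1 = 0$.

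Focus on the case $m_1 = 0$. The strategy is to splice the $\cat{X}$-approximation triangle with $m_0$ copies of the exchange triangle $Y \to T_1 \oplus T_2 \to X \to \sus Y$ to produce a $\cat{Y}$-approximation of $C$. Setting $P \deff (T_1 \oplus T_2)^{m_0} \oplus X_0'$, the direct sum of $m_0$ exchange triangles with the identity triangle on $X_0'$ yields a triangle $Y^{m_0} \to P \xrightarrow{\delta} X_0 \to \sus Y^{m_0}$, where $\delta$ equals the exchange deflation $(T_1 \oplus T_2)^{m_0} \to X^{m_0}$ on the first summand of $P$ and the identity on the second. Applying the octahedral axiom to the composite $y\delta$ produces a triangle
\[
E \to P \xrightarrow{y\delta} C \to \sus E
\]
together with a triangle $Y^{m_0} \to E \to X_1 \to \sus Y^{m_0}$. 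Since $m_1 = 0$, we have $X_1 = X_1' \in \cat{X}\cap\cat{Y} \subseteq \cat{Y}$, so rigidity of $\cat{Y}$ gives $\Ext^1(X_1, Y^{m_0}) = 0$ and the latter triangle splits to $E \cong X_1 \oplus Y^{m_0}$, which lies in $\cat{Y}$.

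Once one verifies that $y\delta$ is a right $\cat{Y}$-approximation of $C$, the displayed triangle becomes an $\fs_{\cat{Y}}$-triangle with end terms in $\cat{Y}$, and \cref{thm:fan-index-isom} delivers $\indxx{\cat{Y}}(C) = [X_0'] + m_0([T_1]+[T_2]) - [X_1] - m_0[Y]$ in $K^{\sp}_{0}(\cat{Y})$, which equals $\phi(\indxx{\cat{X}}(C))$ upon expanding $\phi([X]) = [T_1]+[T_2]-[Y]$. The approximation property is checked as follows: for $Y'\in\cat{Y}$ and $f\colon Y' \to C$, rigidity of $\cat{Y}$ gives $\Ext^1(Y', X_1) = 0$, so $f$ lifts through $y$ to $g\colon Y' \to X_0$; then $\Ext^1(Y', Y^{m_0}) = 0$ lifts the $X^{m_0}$-component of $g$ through the exchange deflation $(T_1\oplus T_2)^{m_0} \to X^{m_0}$, producing the required factorisation of $f$ through $y\delta$.

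The case $[\indxx{\cat{X}}(C) \mathbin{:} X] \leq 0$ is handled symmetrically by using the other exchange triangle $X \to S_1 \oplus S_2 \to Y \to \sus X$ to replace the left term $X_1$ of the $\cat{X}$-approximation rather than the middle term $X_0$. The main technical hurdle in both cases is the approximation check: it is precisely the disjointness $m_0 m_1 = 0$ provided by \cref{lem:no-common-summands} that ensures every obstruction group appearing in the relevant long exact sequences is an $\Ext^1$-group between objects of $\cat{Y}$, so that rigidity of $\cat{Y}$ annihilates it.
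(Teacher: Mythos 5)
Your proposal is correct and follows essentially the same route as the paper, whose proof simply observes that the argument of Dehy--Keller \cite[Thm.~3.1]{DehyKeller-On-the-combinatorics-of-rigid-objects-in-2-Calabi-Yau-categories} carries over once \cref{lem:no-common-summands} replaces the 2-Calabi--Yau input; your write-up is a faithful expansion of exactly that argument (splicing the minimal $\cat{X}$-approximation triangle with the exchange triangles, using the octahedral axiom, and killing the obstructions by rigidity of $\cat{Y}$).
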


\subsection{Examples}
\label{sec:examples}

We now give some examples to illustrate the theory above.

\begin{example}\label{exam:1-accum-pt}
Consider the completed discrete cluster category $\ol{\cat{C}_1}$ of type $\BA$ associated to the disc with one accumulation point $p$ on the boundary. 
Let $\cat{X}$ denote the triangulation consisting of all arcs having precisely one endpoint at $p$; see Figure~\ref{fig:1-acc-pt}. 
\begin{figure}[ht]
\centering
\begin{tikzpicture}[scale=2]
\begin{scope}
    \draw (0,0) circle (1cm); 

    \draw (90:1cm) node{$\bullet$}; 
    \draw (90:1.18cm) node{$\scriptstyle p$};

    \draw (90:1cm) -- (270:1cm); 

\draw (90:1cm) edge[bend left] (150:1cm);
\draw (90:1cm) edge[bend left=20] (170:1cm);
\draw (90:1cm) edge[bend left=15] (190:1cm);
\draw (90:1cm) edge[bend left=10] (210:1cm);
\draw (90:1cm) edge[bend left=10] (230:1cm);
\draw (90:1cm) edge[bend left=5] (250:1cm);

\draw (90:1cm) edge[bend right] (390:1cm);
\draw (90:1cm) edge[bend right=20] (370:1cm);
\draw (90:1cm) edge[bend right=15] (350:1cm);
\draw (90:1cm) edge[bend right=10] (330:1cm);
\draw (90:1cm) edge[bend right=10] (310:1cm);
\draw (90:1cm) edge[bend right=5] (290:1cm);

   \draw[thick, dotted] ([shift=(130:0.83cm)]0,0) arc (-100:-140:0.2cm);
   \draw[thick, dotted] ([shift=(50:0.83cm)]0,0) arc (-80:-40:0.2cm);
\end{scope}
\end{tikzpicture}
\caption{The fountain triangulation $\cat{X}$ in $(S,\ol{M_1})$ with base the unique accumulation point $p$.} 
    \label{fig:1-acc-pt}
\end{figure}

The triangulation $\cat{X}$ is known as the \emph{fountain triangulation with base $p$}, and is thus a fan triangulation \cite[Exam.~5.5]{CKP}. 
Moreover, $\cat{X}$ is an example of a fan triangulation that is neither rigid nor closed under extensions in the triangulated category $\ol{\cat{C}_1}$. Indeed, fix any indecomposable $X = \{p,x\}\in\cat{X}$, and consider 
$\sus^{-1}X = \{p, x^+\}$
and 
$\sus^{-2}X = \{p, x^{++}\}$ which also lie in $\cat{X}$.
Then there is a non-zero morphism $X \to \sus^{-1}X = \sus(\sus^{-2}X)$, which completes backwards to the triangle
$
\sus^{-2}X \to E \to X \to \sus^{-1}X,
$
where $E = \{ x, x^{++} \}$ (see \ref{triangle-one-acc-pt}). 
Notice that $E\notin\cat{X}$ even though $\sus^{-2}X,X\in\cat{X}$.
\end{example}

\begin{example}\label{exam:2-accum-pt}
Consider the completed discrete cluster category $\ol{\cat{C}_2}$ of type $\BA$ associated to the disc with two accumulation points $p \neq q$ on the boundary. Let $\cat{X}$ denote the fountain triangulation with base $p$; 
see Figure~\ref{fig:2-acc-pt}. 
\begin{figure}[ht]
\centering
\begin{tikzpicture}[scale=2]
\begin{scope}
    \draw (0,0) circle (1cm); 

    \draw (90:1cm) node{$\bullet$}; 
    \draw (90:1.18cm) node{$\scriptstyle p$};
    \draw (310:1cm) node{$\bullet$}; 
    \draw (310:1.18cm) node{$\scriptstyle q$};

    \draw (90:1cm) -- (270:1cm); 

\draw (90:1cm) edge[bend left] (150:1cm);
\draw (90:1cm) edge[bend left=20] (170:1cm);
\draw (90:1cm) edge[bend left=15] (190:1cm);
\draw (90:1cm) edge[bend left=10] (210:1cm);
\draw (90:1cm) edge[bend left=10] (230:1cm);
\draw (90:1cm) edge[bend left=5] (250:1cm);

\draw (90:1cm) edge[bend right] (390:1cm);
\draw (90:1cm) edge[bend right=20] (370:1cm);
\draw (90:1cm) edge[bend right=15] (350:1cm);
\draw (90:1cm) edge[bend right=10] (335:1cm);
\draw (90:1cm) edge[bend right=10] (325:1cm);
\draw (90:1cm) edge[bend right=10] (310:1cm);
\draw (90:1cm) edge[bend right=5] (295:1cm);
\draw (90:1cm) edge[bend right=5] (285:1cm);

   \draw[thick, dotted] ([shift=(130:0.83cm)]0,0) arc (-100:-140:0.2cm);
   \draw[thick, dotted] ([shift=(50:0.83cm)]0,0) arc (-80:-40:0.2cm);
   \draw[thick, dotted] ([shift=(-44:0.83cm)]0,0) arc (-70:-30:0.2cm);
   \draw[thick, dotted] ([shift=(-60:0.8cm)]0,0) arc (-80:-40:0.2cm);
\end{scope}
\end{tikzpicture}
\caption{The fountain triangulation $\cat{X}$ in $(S,\ol{M_2})$ with base the accumulation point $p$.} 
    \label{fig:2-acc-pt}
\end{figure}
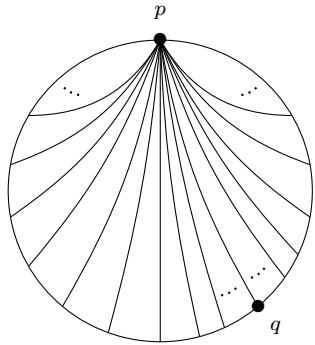

\begin{itemize}

\item We now show that in \cref{thm:index_det_rigid} the assumption of $C$ being rigid  is necessary and, in particular, assuming only that the indecomposable direct summands of $C$ are pairwise non-transversely crossing is not enough.
Consider Figure~\ref{fig:2-acc-pt_extriangles} for the rest of the argument. 
\begin{figure}[ht]
\centering
\begin{tikzpicture}[scale=2]
    \draw (0,0) circle (1cm); 

    \draw (90:1cm) node{$\bullet$}; 
    \draw (90:1.18cm) node{$\scriptstyle p$};
    \draw (310:1cm) node{$\bullet$}; 
    \draw (310:1.18cm) node{$\scriptstyle q$};

    \draw (310:1cm) edge node[above, pos=0.3] {$\scriptstyle A$}(240:1cm); 
    \draw (390:1cm) edge node[left, pos=0.5] {$\scriptstyle E$} (240:1cm); 
    \draw (390:1cm) edge node[below left, pos=0.4] {$\scriptstyle C$} (310:1cm);

    \draw (240:0.97cm) edge node[below left, pos=0.3] {$\scriptstyle a_1$} (240:1.03cm);
    \draw (250:0.97cm) edge node[below, pos=0.3] {$\scriptstyle a_1^+$} (250:1.03cm);
    \draw (390:0.97cm) -- (390:1.03cm);

    \draw (90:1cm) edge[red] node[below, pos=0.4] {$\scriptstyle X_C$} (390:1cm);
    \draw (90:1cm) edge[red] node[left, pos=0.5] {$\scriptstyle X_A$} (310:1cm);
    \draw (90:1cm) edge[red] node[left, pos=0.3] {$\scriptstyle Y$} (250:1cm);
\end{tikzpicture}
\caption{The black arcs 
give the $\fs_{\cat{X}}$-triangle 
$A \to E \to C \dashrightarrow$.}
    \label{fig:2-acc-pt_extriangles}
\end{figure}
The object $A\oplus C$ is not rigid, as by \ref{homspaces} there is a non-zero morphism $C\rightarrow \sus A$. Note that $E$ is rigid.

By \ref{morph_to_C} and \cref{lem:approximation} we have $\fs_{\cat{X}}$-triangles
\begin{center}
$
\begin{tikzcd}[column sep=0.5cm,cramped]
X_A \arrow{r}
& X_C \arrow{r}{x_C}
& C \arrow[dashed]{r}{c}
& {}, \quad
Y \arrow{r}
& X_A \arrow{r}{x_A}
& A \arrow[dashed]{r}{a}
& {}, \quad
Y \arrow{r}
& X_C \arrow{r}{x_E}
& E \arrow[dashed]{r}{e}
& {},
\end{tikzcd}
$
\end{center}
where $x_C$, $x_A$ and $x_E$ are minimal right $\cat{X}$-approximations and $X_A,\, Y\in\cat{X}$. Moreover, noting that the direct sum of $x_A$ and $x_C$ is a minimal right $\cat{X}$-ap\-prox\-i\-ma\-tion of $A\oplus C$, by \cref{thm:fan-index-isom} we have 
\begin{center}
$
    \indxx{\cat{X}}(A\oplus C)= [X_A\oplus X_C]^{\sp}-[Y\oplus X_A]^{\sp}=[X_C]^{\sp}-[Y]^{\sp}=\indxx{\cat{X}}(E).
$
\end{center}
But $A\oplus C \not\cong E$, hence the map from \cref{thm:index_det_rigid} is an injection only when we restrict to rigid objects.
\item Finally, we show that \cite[Thm.~2.4]{DehyKeller-On-the-combinatorics-of-rigid-objects-in-2-Calabi-Yau-categories} fails for fan triangulations that are not rigid. By the above point, for the arcs in Figure~\ref{fig:2-acc-pt_extriangles}, we have
\begin{center}
$
    \indxx{\cat{X}}(A)+ \indxx{\cat{X}}(C)=\indxx{\cat{X}}(E).
$
\end{center}
Moreover, the set of arcs $A$, $C$ and $E$ can be extended to a fan triangulation; see, for example, the fan triangulation $\cat{X}'$ in Figure~\ref{fig:2-acc-pt_triangultation2}. Then, for $X'\deff A\oplus C\oplus E\in\cat{X}'$, the set $\{\indxx{\cat{X}}(A),\indxx{\cat{X}}(C),\indxx{\cat{X}}(E)\}$ is not linearly independent in  $K^{\sp}_{0}(\cat{X})$.
\end{itemize}
\end{example}
\begin{figure}[ht]
\centering
\begin{tikzpicture}[scale=2]
\begin{scope}
    \draw (0,0) circle (1cm); 

    \draw (90:1cm) node{$\bullet$}; 
    \draw (90:1.18cm) node{$\scriptstyle p$};
    \draw (310:1cm) node{$\bullet$}; 
    \draw (310:1.18cm) node{$\scriptstyle q$};

    \draw (310:1cm) edge[bend right=20] node[above, pos=0.3] {$\scriptstyle A$}(240:1cm); 
    \draw (390:1cm) edge node[right, pos=0.5] {$\scriptstyle E$} (240:1cm); 
    \draw (390:1cm) edge[bend right=10] node[below left, pos=0.4] {$\scriptstyle C$} (310:1cm);

    \draw (240:0.97cm) edge (240:1.03cm);
    \draw (250:0.97cm) -- (250:1.03cm);
    \draw (260:0.97cm) --  (260:1.03cm);
    \draw (390:0.97cm) -- (390:1.03cm);
    \draw (230:0.97cm) -- (230:1.03cm);
    \draw (220:0.97cm) -- (220:1.03cm);
    \draw (10:0.97cm) -- (10:1.03cm);
    \draw (20:0.97cm) -- (20:1.03cm);
    \draw (40:0.97cm) -- (40:1.03cm);
    \draw (50:0.97cm) -- (50:1.03cm);

    \draw (240:1cm) edge[bend left=40] (260:1cm);
    \draw (240:1cm) edge[bend left=30] (275:1cm);
    \draw (240:1cm) edge[bend left=30] (290:1cm);
    \draw (30:1cm) edge[bend right=5] (230:1cm);
    \draw (30:1cm) edge[bend right=5] (220:1cm);
    \draw (30:1cm) edge[bend right=5] (205:1cm);
    \draw (30:1cm) edge[bend right=5] (185:1cm);
    \draw (30:1cm) (165:1cm);
    \draw (30:1cm) edge[bend left=5] (155:1cm);
    \draw (30:1cm) edge[bend left=10] (135:1cm);
    \draw (30:1cm) edge[bend left=10] (115:1cm);
    \draw (30:1cm) edge[bend left=20] (90:1cm);
    \draw (30:1cm) edge[bend left=20] (70:1cm);
    \draw (30:1cm) edge[bend left=30] (50:1cm);
    \draw (30:1cm) edge[bend right=30] (10:1cm);
    \draw (30:1cm) edge[bend right=20] (-10:1cm);
    \draw (30:1cm) edge[bend right=10] (-30:1cm);
    
    \draw[thick, dotted] ([shift=(290:0.90cm)]0,0) arc (-70:-30:0.2cm);
    \draw[thick, dotted] ([shift=(320:0.90cm)]0,0) arc (-60:-20:0.2cm);
    \draw[thick, dotted] ([shift=(100:0.88cm)]0,0) arc (130:90:0.2cm);
    \draw[thick, dotted] ([shift=(80:0.93cm)]0,0) arc (100:60:0.2cm);
\end{scope}
\end{tikzpicture}
\caption{A fan triangulation $\cat{X}'$ including the arcs $A$, $C$ and $E$.} 
    \label{fig:2-acc-pt_triangultation2}
\end{figure}
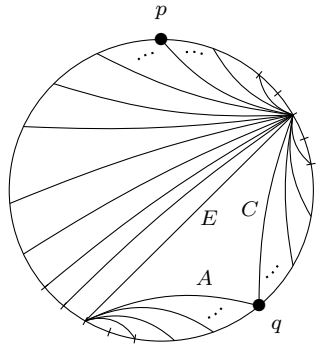


\section{The module category of an additive category}
\label{sec:module-category}

In the remaining two sections of this paper, our aim is to establish a formula for the additivity of the index on triangles up to an error term. 
We work slightly more generally in this section, assuming only that we have a nice enough additive category $\cat{C}$ with a contravariantly finite, additive subcategory $\cat{X}$ (see \cref{setup_additive}). 
We construct an epimorphism $K_0(\rmod{\cat{C}})\onto K_0(\rmod{\cat{X}})$ of Grothendieck groups and give a description of its kernel (\cref{prop:restriction-right-exact-sequence-of-K0}), which is key in the proof of \cref{thm:F}.

\begin{notation}\label{notation_additive}
For the following, suppose that $\cat{C}$ is a skeletally small additive category.
\begin{enumerate}[label=\textup{(\arabic*)}]

    \item Let $\cat{X}\subseteq \cat{C}$ be a subcategory and $C$ an object in $\cat{C}$. A morphism $x\colon  X\rightarrow C$ with $X\in\cat{X}$ is called a \textit{right $\cat{X}$-approximation of $C$} if for any $X'\in\cat{X}$ the induced morphism $\cat{C}(X',x)\colon \cat{C}(X',X)\rightarrow\cat{C}(X',C)$ is surjective. If for each $C\in\cat{C}$ there is a right $\cat{X}$-approximation, then we say that $\cat{X}$ is \textit{contravariantly finite} (see \cite[pp.~113--114]{auslander-reiten-contravariantly}).

    \item A morphism $a\colon  A\rightarrow B$ in $\cat{C}$ is called a \textit{weak kernel of a morphism $b\colon  B\rightarrow C$ in $\cat{C}$} if the induced sequence
    \[
    \begin{tikzcd}[column sep=1.5cm]
        \cat{C}(-,A)
            \arrow{r}{\cat{C}(-,a)}
        & \cat{C}(-,B)
            \arrow{r}{\cat{C}(-,b)}
        & \cat{C}(-,C)
    \end{tikzcd}
    \]
    is exact. We say that the category $\cat{C}$ \textit{has weak kernels} if every morphism in $\cat{C}$ has a (not necessarily unique) weak kernel.
    
    \item\label{item_adjoints} 
    The \textit{category of $\cat{C}$-modules}, denoted by $\rMod{\cat{C}}$, is the abelian category of all (covariant) additive functors $\fun{M}\colon \cat{C}^{\op}\rightarrow \Ab$. If $\cat{X}\subseteq \cat{C}$ is an additive subcategory, then there is a triplet of adjoint functors
    \begin{equation}\label{eqn_adjoints}
    \begin{tikzcd}[column sep=3cm]
        \rMod{\cat{C}} 
            \arrow{r}{\restr{(-)}{\cat{X}}}
        & \rMod{\cat{X}},
            \arrow[bend right]{l}[swap]{\adj{L}}
            \arrow[bend left]{l}{\adj{R}}
    \end{tikzcd}
    \end{equation}
    where the left adjoint $\adj{L}$ and the right adjoint $\adj{R}$ of the restriction functor $\restr{(-)}{\cat{X}}$ are fully faithful functors. Furthermore, $\adj{L}$ is right exact and $\adj{R}$ is left exact, while $\restr{(-)}{\cat{X}}$ is exact. See \cite[Props.~3.1 and 3.4]{Auslander-Rep-theory-of-Artin-algebras-I}.

    \item \label{item_modC} A $\cat{C}$-module $\fun{M}\colon \cat{C}^{\op}\rightarrow \Ab$ is \textit{finitely presented} if there is an exact sequence 
    \[
    \begin{tikzcd}
        \cat{C}(-,A)
            \arrow{r}
        &\cat{C}(-,B)
            \arrow{r}
        &\fun{M}
            \arrow{r}
        &0
    \end{tikzcd}
    \]
    for some objects $A,B\in\cat{C}$ (see \cite[p.~155]{Beligiannis-on-the-freyd-cats-of-additive-cats}). We denote  by $\rmod{\cat{C}}$ the subcategory of $\rMod{\cat{C}}$ consisting of the finitely presented $\cat{C}$-modules.
    In general, this is an additive category, but if $\cat{C}$ is also idempotent complete and has weak kernels, then $\rmod{\cat{C}}$ is abelian and the inclusion functor $\rmod{\cat{C}} \to \rMod{\cat{C}}$ is exact (see e.g.\  \cite[Sec.~III.2]{Auslander-representation-dimension-of-artin-algebras-QMC}). When $\cat{C}$ is a triangulated category, $\rmod{\cat{C}}$ is also known as the \emph{Freyd category of $\cat{C}$}; see \cite[Ch.~5]{Neeman-triangulated-categories}.
    
    \item\label{item_Yoneda} Suppose that $\cat{C}$ has split idempotents. Then the Yoneda embedding $\yoneda \colon  \cat{C}\rightarrow \rmod{\cat{C}}$ given by $C\mapsto \yoneda C\deff\cat{C}(-,C)$ is fully faithful and its values are projective objects in $\rmod{\cat{C}}$ (and in $\rMod{\cat{C}}$). 
    Since $\cat{C}$ is idempotent complete, any projective object in $\rmod{\cat{C}}$ is of  the form $\cat{C}(-,C)$ up to isomorphism. 
    See \cite[Prop.~2.2]{Auslander-Rep-theory-of-Artin-algebras-I}.
\end{enumerate}
\end{notation}

In this section, we work in the following setup.

\begin{setup}\label{setup_additive}
    Let $\cat{C}$ be a skeletally small, idempotent complete, additive category that has weak kernels. Furthermore, let $\cat{X}\subseteq \cat{C}$ be a contravariantly finite, additive subcategory. 
\end{setup}

Since $\cat{X}$ is contravariantly finite, it has weak kernels and, by \cref{notation_additive}\ref{item_modC}, we have that $\rmod{\cat{C}}$ and $\rmod{\cat{X}}$ are abelian categories and that their inclusions into $\rMod{\cat{C}}$ and $\rMod{\cat{X}}$ are exact functors.

\begin{lem}\label{lemma_restriction_finpres}
    The exact functor $\restr{(-)}{\cat{X}}\colon  \rMod{\cat{C}}\rightarrow \rMod{\cat{X}}$ restricts to an exact functor on finitely presented modules $\restr{(-)}{\cat{X}}\colon \rmod{\cat{C}}\rightarrow \rmod{\cat{X}}$.
\end{lem}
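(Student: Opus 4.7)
The plan is to reduce the lemma to the key local claim that for each object $C\in\cat{C}$, the restricted representable $\restr{\cat{C}(-,C)}{\cat{X}}$ is finitely presented as an $\cat{X}$-module. Once this is in hand, any finitely presented $\fun{M}\in\rmod{\cat{C}}$ admits a presentation $\cat{C}(-,A)\to \cat{C}(-,B)\to \fun{M}\to 0$, and applying the already-exact restriction $\restr{(-)}{\cat{X}}\colon \rMod{\cat{C}}\to \rMod{\cat{X}}$ from \cref{notation_additive}\ref{item_adjoints} exhibits $\restr{\fun{M}}{\cat{X}}$ as a cokernel of a morphism between finitely presented $\cat{X}$-modules; since $\rmod{\cat{X}}$ is abelian, this cokernel lies in $\rmod{\cat{X}}$.

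To prove the local claim, the strategy is to assemble a two-term projective presentation using contravariant finiteness together with the existence of weak kernels in $\cat{C}$. First, I would choose a right $\cat{X}$-approximation $x_0\colon X_0\to C$ of $C$ in $\cat{C}$; by definition of a right approximation, the induced morphism $\cat{X}(-,X_0)\to \restr{\cat{C}(-,C)}{\cat{X}}$ is an epimorphism in $\rMod{\cat{X}}$. Next, I would take a weak kernel $w\colon W\to X_0$ of $x_0$ in $\cat{C}$, and then a right $\cat{X}$-approximation $x_1\colon X_1\to W$ of $W$. The composite $wx_1\colon X_1\to X_0$ should induce a map $\cat{X}(-,X_1)\to \cat{X}(-,X_0)$ whose image at each $X'\in\cat{X}$ is exactly the kernel of $\cat{X}(X',X_0)\to \cat{C}(X',C)$: any $f\colon X'\to X_0$ with $x_0f=0$ factors through $W$ by the weak-kernel property, and the resulting morphism $X'\to W$ then lifts through $X_1$ by the approximation property. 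This yields exactness of
\[
\cat{X}(-,X_1)\to \cat{X}(-,X_0)\to \restr{\cat{C}(-,C)}{\cat{X}}\to 0
\]
in $\rMod{\cat{X}}$, which is the required finite presentation.

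Exactness of the restricted functor $\rmod{\cat{C}}\to \rmod{\cat{X}}$ then comes for free: by \cref{notation_additive}\ref{item_modC}, the inclusions $\rmod{\cat{C}}\hookrightarrow \rMod{\cat{C}}$ and $\rmod{\cat{X}}\hookrightarrow \rMod{\cat{X}}$ are exact and fully faithful, and the composite $\rmod{\cat{C}}\hookrightarrow \rMod{\cat{C}}\to \rMod{\cat{X}}$ is exact because both factors are; since this composite factors through the exact inclusion $\rmod{\cat{X}}\hookrightarrow \rMod{\cat{X}}$, exactness descends to the restricted functor.

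The only mildly non-trivial point is the construction of the second step of the presentation: the weak kernel $W$ produced in $\cat{C}$ need not itself lie in $\cat{X}$, so one has to pass through the $\cat{X}$-approximation $x_1$ of $W$ in order to bring the relations inside $\cat{X}$. Beyond that, the argument is a routine assembly of the abelian-category and approximation-theoretic formalism recalled in \cref{notation_additive}.
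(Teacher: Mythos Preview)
Your proposal is correct and follows essentially the same route as the paper: first build a finite presentation of $\restr{\cat{C}(-,C)}{\cat{X}}$ by composing a weak kernel of a right $\cat{X}$-approximation with a further right $\cat{X}$-approximation, then deduce the general case by presenting an arbitrary $\fun{M}\in\rmod{\cat{C}}$ and applying the exact restriction. The paper's argument is identical up to notation (it writes $C_0$ for your $W$ and $c_0$ for your $w$), and your treatment of exactness of the restricted functor is slightly more explicit but amounts to the same reasoning.
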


\begin{proof}
We first show that $\restr{\cat{C}(-,C)}{\cat{X}}$ is in $\rmod{\cat{X}}$ for any $C\in\cat{C}$. Since $\cat{X}$ is contravariantly finite in $\cat{C}$ and $\cat{C}$ has weak kernels, we can build a commutative diagram of the form
\[
\begin{tikzcd}
    X_{1}
        \arrow{dr}[swap]{x_{1}}
        \ar{rr}{c_{0}x_{1}}
    && X_0 
        \arrow{rr}{x_{0}}
    && C,\\
    &C_{0}
        \arrow{ur}[swap]{c_{0}}
    &&
\end{tikzcd}
\]
where $x_0,x_1$ are right $\cat{X}$-approximations and $c_0$ is a weak kernel of $x_0$. This induces an exact sequence
\[
\begin{tikzcd}[column sep=1.8cm]
    \cat{X}(-,X_{1})
        \arrow{r}{\cat{X}(-,c_{0}x_{1})}
    &\cat{X}(-,X_{0})
        \arrow{r}{\cat{X}(-,x_{0})} 
    &\restr{\cat{C}(-,C)}{\cat{X}}
        \arrow{r}
    &0,
\end{tikzcd}
\]
so that $\restr{\cat{C}(-,C)}{\cat{X}}\in\rmod{\cat{X}}$. 

Consider now an arbitrary object $\fun{M}\in\rmod{\cat{C}}$. By definition, there exist objects $A,B \in\cat{C}$ such that the sequence
$\cat{C}(-,A)\rightarrow\cat{C}(-,B)\rightarrow \fun{M}\rightarrow 0$ is exact. Applying $\restr{(-)}{\cat{X}}$ to this sequence, we obtain the exact sequence
\[
\begin{tikzcd}
\restr{\cat{C}(-,A)}{\cat{X}}
    \arrow{r}
&\restr{\cat{C}(-,B)}{\cat{X}}
    \arrow{r}
&\restr{\fun{M}}{\cat{X}}
    \arrow{r}
&0,
\end{tikzcd}
\]
where, by the above argument, the two leftmost objects are in $\rmod{\cat{X}}$ and hence $\restr{\fun{M}}{\cat{X}}\in\rmod{\cat{X}}$. Moreover, the exactness of the original functor implies the exactness of the restricted 
functor $\restr{(-)}{\cat{X}}\colon \rmod{\cat{C}}\rightarrow \rmod{\cat{X}}$.
\end{proof}

\begin{lem}\label{lemma_left_functor}
    The functor $\adj{L}$ from \eqref{eqn_adjoints} satisfies $\adj{L}(\cat{X}(-,X)) \cong\cat{C}(-,X)$ for any $X\in\cat{X}$.
\end{lem}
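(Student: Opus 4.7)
The plan is to prove this directly using the Yoneda lemma twice, once for $\rMod{\cat{X}}$ and once for $\rMod{\cat{C}}$, combined with the adjunction between $\adj{L}$ and the restriction functor.

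Concretely, fix $X \in \cat{X}$. For any $\fun{M} \in \rMod{\cat{C}}$, I would compute
\[
    \Hom_{\rMod{\cat{C}}}\bigl(\adj{L}(\cat{X}(-,X)), \fun{M}\bigr)
    \cong \Hom_{\rMod{\cat{X}}}\bigl(\cat{X}(-,X), \restr{\fun{M}}{\cat{X}}\bigr)
    \cong \restr{\fun{M}}{\cat{X}}(X)
    = \fun{M}(X)
    \cong \Hom_{\rMod{\cat{C}}}\bigl(\cat{C}(-,X), \fun{M}\bigr),
\]
where the first isomorphism is the adjunction $\adj{L} \dashv \restr{(-)}{\cat{X}}$ from \cref{notation_additive}\ref{item_adjoints}, the second and fourth are the Yoneda lemma (for $\rMod{\cat{X}}$ and $\rMod{\cat{C}}$ respectively), and the equality in the middle uses that $X \in \cat{X}$, so the restriction of $\cat{C}(-,X)$ to $\cat{X}$ literally evaluates to $\fun{M}(X)$.

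This chain is natural in $\fun{M}$, so by the (contravariant) Yoneda lemma applied to the representable functor $\Hom_{\rMod{\cat{C}}}(-,\fun{M})$ on $\rMod{\cat{C}}$, there is an isomorphism $\adj{L}(\cat{X}(-,X)) \cong \cat{C}(-,X)$ in $\rMod{\cat{C}}$. There is no real obstacle here; the only point requiring a sentence of care is confirming naturality in $\fun{M}$ of each isomorphism in the display (the adjunction unit/counit give naturality of the first, the Yoneda isomorphism is natural by construction, and the middle equality is tautological), so that the composite isomorphism determines the desired map in $\rMod{\cat{C}}$.
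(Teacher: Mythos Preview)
Your proof is correct and essentially identical to the paper's own argument: both compute $\Hom_{\rMod{\cat{C}}}(\adj{L}(\cat{X}(-,X)),\fun{M})$ via the adjunction, two applications of the Yoneda lemma, and the tautology $\restr{\fun{M}}{\cat{X}}(X)=\fun{M}(X)$, then conclude by the Yoneda principle that objects representing the same functor are isomorphic. (One minor slip in your wording: the middle equality is because the restriction of $\fun{M}$---not of $\cat{C}(-,X)$---to $\cat{X}$ evaluated at $X$ is $\fun{M}(X)$.)
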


\begin{proof}
    Let $X\in\cat{X}$. For $\fun{G}\in\rMod{\cat{C}}$, we have 
    \begin{align*}
        (\rMod{\cat{C}})(\adj{L}(\cat{X}(-,X)), \fun{G})
            &\cong
        (\rMod{\cat{X}})(\cat{X}(-,X), \restr{\fun{G}}{\cat{X}})\\
        &\cong \restr{\fun{G}}{\cat{X}}(X)\\
        &= \fun{G}(X)\\
        &\cong (\rMod{\cat{C}})(\cat{C}(-,X), \fun{G}),
    \end{align*}
    where the first isomorphism holds because $\adj{L}$ is left adjoint to $\restr{(-)}{\cat{X}}$, and the second and fourth ones hold by Yoneda lemma (see e.g.\ \cite[Thm.~IV.6.1]{AssemSimsonSkowronski-Vol1}). The result follows by e.g.\ \cite[Cor.~IV.6.3]{AssemSimsonSkowronski-Vol1}, since the above is true for a general element $\fun{G}\in\rMod{\cat{C}}$. 
\end{proof}

\begin{lem}
\label{lemma_adjoint_restricted}
    The functors $\adj{L}$ and $\restr{(-)}{\cat{X}}$ from \eqref{eqn_adjoints} restrict to a pair of adjoint functors
    \[
    \begin{tikzcd}[column sep=2cm]
    \rmod{\cat{C}}
        \arrow{r}[swap]{\restr{(-)}{\cat{X}}}
    & \rmod{\cat{X}},
        \arrow[bend right]{l}[swap]{\adj{L}}
    \end{tikzcd}
    \]
    where $\adj{L}$ is full and faithful.
\end{lem}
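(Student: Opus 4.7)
The plan is to show the two restricted functors are well-defined on the subcategories of finitely presented modules, then deduce the adjunction and full-faithfulness by restriction from the ambient adjunction \eqref{eqn_adjoints}.

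First I would verify that $\adj{L}$ sends $\rmod{\cat{X}}$ into $\rmod{\cat{C}}$. For $\fun{M}\in\rmod{\cat{X}}$, choose a finite presentation
\[
\begin{tikzcd}
\cat{X}(-,X_1)\arrow{r} & \cat{X}(-,X_0)\arrow{r} & \fun{M}\arrow{r} & 0
\end{tikzcd}
\]
with $X_0,X_1\in\cat{X}$. Since $\adj{L}$ is a left adjoint it is right exact (\cref{notation_additive}\ref{item_adjoints}), so applying it yields an exact sequence
\[
\begin{tikzcd}
\adj{L}(\cat{X}(-,X_1))\arrow{r} & \adj{L}(\cat{X}(-,X_0))\arrow{r} & \adj{L}(\fun{M})\arrow{r} & 0.
\end{tikzcd}
\]
By \cref{lemma_left_functor} we have $\adj{L}(\cat{X}(-,X_i))\cong \cat{C}(-,X_i)$, so the sequence exhibits $\adj{L}(\fun{M})$ as a finitely presented $\cat{C}$-module, i.e.\ $\adj{L}(\fun{M})\in\rmod{\cat{C}}$. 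Combined with \cref{lemma_restriction_finpres}, both functors restrict as claimed.

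Next I would check that the adjunction descends. Since $\rmod{\cat{C}}\sse\rMod{\cat{C}}$ and $\rmod{\cat{X}}\sse\rMod{\cat{X}}$ are full subcategories, for $\fun{M}\in\rmod{\cat{X}}$ and $\fun{N}\in\rmod{\cat{C}}$ we have
\[
(\rmod{\cat{C}})(\adj{L}\fun{M},\fun{N})
    = (\rMod{\cat{C}})(\adj{L}\fun{M},\fun{N})
    \cong (\rMod{\cat{X}})(\fun{M},\restr{\fun{N}}{\cat{X}})
    = (\rmod{\cat{X}})(\fun{M},\restr{\fun{N}}{\cat{X}}),
\]
with the middle isomorphism being the adjunction for \eqref{eqn_adjoints}. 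Naturality is inherited, so we obtain an adjunction between the restricted functors. Finally, full-faithfulness of the restricted $\adj{L}\colon \rmod{\cat{X}}\to \rmod{\cat{C}}$ follows immediately from that of the unrestricted $\adj{L}\colon \rMod{\cat{X}}\to \rMod{\cat{C}}$ (noted in \cref{notation_additive}\ref{item_adjoints}), since fullness and faithfulness pass to any full subcategory of the source whose image lies in the target.

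No step here is a genuine obstacle: the only nontrivial part is that $\adj{L}$ preserves finite presentability, and this is immediate from right-exactness of $\adj{L}$ together with \cref{lemma_left_functor}, which reduces the question to representables.
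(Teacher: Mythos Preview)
Your proof is correct and follows essentially the same approach as the paper: both verify that $\adj{L}$ preserves finite presentability by applying right-exactness of $\adj{L}$ to a projective presentation and invoking \cref{lemma_left_functor}, then combine this with \cref{lemma_restriction_finpres}. The paper simply asserts that once both functors restrict the adjunction and full-faithfulness are inherited, whereas you spell these out explicitly; this is a harmless elaboration.
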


\begin{proof}
    It is enough to see that the functors $\restr{(-)}{\cat{X}}$ and $\adj{L}$ from \eqref{eqn_adjoints} restrict to functors between $\rmod{\cat{C}}$ and $\rmod{\cat{X}}$. We already know the functor $\restr{(-)}{\cat{X}}$ does by \cref{lemma_restriction_finpres}.
    
    Thus, consider an element $\fun{M}\in\rmod{\cat{X}}$. By definition, there exists an exact sequence of the form 
    \[
    \begin{tikzcd}
        \cat{X}(-,X_1)
            \arrow{r}
        & \cat{X}(-,X_0)
            \arrow{r}
        & \fun{M}
            \arrow{r}
        & 0.
    \end{tikzcd}
    \]
    Since, as recalled in \cref{notation_additive}\ref{item_adjoints}, $\adj{L}$ is a right exact functor, it maps the above right exact sequence to a right exact sequence, which is isomorphic to one of the form
    \[
    \begin{tikzcd}
        \cat{C}(-,X_1)
            \arrow{r}
        & \cat{C}(-,X_0)
            \arrow{r}
        & \adj{L}(\fun{M})
            \arrow{r}
        & 0
    \end{tikzcd}
    \]
    by \cref{lemma_left_functor}. Hence, we have $\adj{L}(\fun{M})\in\rmod{\cat{C}}$.
\end{proof}

\begin{prop}\label{prop:restriction-right-exact-sequence-of-K0}
    The functor $\restr{(-)}{\cat{X}}\colon \rmod{\cat{C}}\rightarrow \rmod{\cat{X}}$ induces an exact sequence of Grothendieck groups
    \[
    \begin{tikzcd}[column sep=1.8cm]
        K_0(\Ker \restr{(-)}{\cat{X}})
            \arrow{r}{K_{0}(\iota)}
        &K_0(\rmod{\cat{C}})
            \arrow{r}{K_0(\restr{(-)}{\cat{X}})} 
        &K_0(\rmod{\cat{X}})
            \arrow{r}
        &0,
    \end{tikzcd}
    \]
    where $K_{0}(\iota)$ is the canonical morphism mapping $[\fun{M}]$ to $[\fun{M}]$.
    In particular, the group $\Ker K_0(\restr{(-)}{\cat{X}})$ is generated by $\Set{ [\fun{M}] | \fun{M}\in\rmod{\cat{C}} \text{ and } \restr{\fun{M}}{\cat{X}}=0 }$.
\end{prop}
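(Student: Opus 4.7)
The plan is to exploit the adjunction $\adj{L} \dashv \restr{(-)}{\cat{X}}$ on finitely presented modules obtained in \cref{lemma_adjoint_restricted}, where $\adj{L}$ is fully faithful and $\restr{(-)}{\cat{X}}$ is exact (\cref{lemma_restriction_finpres}). Surjectivity of $K_0(\restr{(-)}{\cat{X}})$ follows directly from this: for any $\fun{M}\in\rmod{\cat{X}}$, the unit $\eta_{\fun{M}}\colon \fun{M} \to \restr{\adj{L}(\fun{M})}{\cat{X}}$ of the adjunction is an isomorphism because $\adj{L}$ is fully faithful, so $[\fun{M}] = K_0(\restr{(-)}{\cat{X}})([\adj{L}(\fun{M})])$. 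The containment $\Im K_0(\iota) \subseteq \Ker K_0(\restr{(-)}{\cat{X}})$ holds by construction. The heart of the argument is the reverse inclusion, together with the description of the image of $K_0(\iota)$ via modules annihilated by restriction.

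For the reverse inclusion, I would proceed in two steps. First, for each $\fun{N}\in\rmod{\cat{C}}$, consider the counit $\epsilon_{\fun{N}}\colon \adj{L}\restr{\fun{N}}{\cat{X}} \to \fun{N}$ with kernel $\fun{K}$ and cokernel $\fun{K}'$ in the abelian category $\rmod{\cat{C}}$. The triangle identity together with $\eta$ being an isomorphism forces $\restr{\epsilon_{\fun{N}}}{\cat{X}}$ to be an isomorphism, and since $\restr{(-)}{\cat{X}}$ is exact, both $\fun{K}$ and $\fun{K}'$ lie in $\Ker\restr{(-)}{\cat{X}}$. Splitting the four-term exact sequence
\[
0 \to \fun{K} \to \adj{L}\restr{\fun{N}}{\cat{X}} \to \fun{N} \to \fun{K}' \to 0
\]
into two short exact sequences and taking Euler characteristics in $K_0(\rmod{\cat{C}})$ yields $[\fun{N}] - [\adj{L}\restr{\fun{N}}{\cat{X}}] = [\fun{K}'] - [\fun{K}] \in \Im K_0(\iota)$.

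Second, I would check that $\fun{M} \mapsto [\adj{L}(\fun{M})]$ descends to a well-defined homomorphism $K_0(\rmod{\cat{X}}) \to K_0(\rmod{\cat{C}})/\Im K_0(\iota)$. Given a short exact sequence $0 \to \fun{M}' \to \fun{M} \to \fun{M}'' \to 0$ in $\rmod{\cat{X}}$, applying the right exact $\adj{L}$ introduces at most one extra kernel term $\fun{L}$ fitting into a four-term exact sequence; since $\restr{\adj{L}(-)}{\cat{X}} \cong \mathrm{id}$ and $\restr{(-)}{\cat{X}}$ is exact, $\fun{L}$ restricts to zero and hence contributes trivially modulo $\Im K_0(\iota)$. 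Combining both steps, any $\xi = \sum a_i [\fun{N}_i] \in \Ker K_0(\restr{(-)}{\cat{X}})$ satisfies $\xi \equiv \sum a_i [\adj{L}\restr{\fun{N}_i}{\cat{X}}] \equiv 0$ modulo $\Im K_0(\iota)$, proving exactness. The ``in particular'' assertion follows formally: $\Ker\restr{(-)}{\cat{X}}$ is a Serre subcategory of $\rmod{\cat{C}}$, so $K_0(\Ker\restr{(-)}{\cat{X}})$ is generated by the classes of its objects, and these are precisely the $\fun{M}$ with $\restr{\fun{M}}{\cat{X}} = 0$.

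The main obstacle is controlling the non-exactness of $\adj{L}$; specifically, verifying that the extra kernel term $\fun{L}$ in the previous paragraph genuinely lies in $\Ker\restr{(-)}{\cat{X}}$. This is exactly where the fully faithfulness of $\adj{L}$ (i.e.\ $\restr{\adj{L}(-)}{\cat{X}} \cong \mathrm{id}$) enters in a crucial way, and likewise underpins the counit argument for the first step.
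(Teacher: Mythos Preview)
Your argument is correct and self-contained, but it takes a genuinely different route from the paper's proof. The paper proceeds by citing two standard results: first, from the adjunction $\adj{L}\dashv\restr{(-)}{\cat{X}}$ with $\adj{L}$ fully faithful, it invokes \cite[Exam.~2.2.3, Prop.~2.2.11, Rem.~2.2.12]{krause2021homological} to identify $\Ker\restr{(-)}{\cat{X}}$ as a Serre subcategory and $\restr{(-)}{\cat{X}}$ as inducing an exact equivalence $\rmod{\cat{C}}/(\Ker\restr{(-)}{\cat{X}})\simeq\rmod{\cat{X}}$; then it applies Bass's localisation sequence \cite[Cor.~VIII.5.5]{Bass-algebraic-k-theory-book} directly. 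Your approach instead unpacks this machinery by hand: you use the counit $\epsilon_{\fun{N}}$ to show $[\fun{N}]\equiv[\adj{L}\restr{\fun{N}}{\cat{X}}]$ modulo $\Im K_0(\iota)$, and then check explicitly that $[\adj{L}(-)]$ is additive on short exact sequences modulo $\Im K_0(\iota)$ by controlling the single kernel term produced by the right-exact $\adj{L}$. Both routes rest on the same input (\cref{lemma_adjoint_restricted} and the exactness of $\restr{(-)}{\cat{X}}$), but the paper's proof is shorter and more conceptual at the cost of two black-box citations, while yours is an elementary, direct verification that essentially reproves the relevant special case of the $K_0$ localisation theorem from scratch.
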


\begin{proof}
    Consider the adjunction $\adj{L}\dashv \restr{(-)}{\cat{X}}$ from \cref{lemma_adjoint_restricted}, where $\adj{L}\colon \rmod{\cat{X}}\rightarrow \rmod{\cat{C}}$ is full and faithful. By \cite[Exam.~2.2.3, Prop.~2.2.11 and Rem.~2.2.12]{krause2021homological}, we have that $\Ker\restr{(-)}{\cat{X}}$ is a Serre subcategory of $\rmod{\cat{C}}$ and $\restr{(-)}{\cat{X}}$ induces an exact equivalence of categories
    \[
    \begin{tikzcd}
        \rmod{\cat{C}}/(\Ker\restr{(-)}{\cat{X}})
            \arrow{r}{\simeq} 
        &\rmod{\cat{X}}.
    \end{tikzcd}
    \]
    Then, by \cite[Cor.~VIII.5.5]{Bass-algebraic-k-theory-book}, we have an exact sequence of Grothendieck groups
    \[
    \begin{tikzcd}[column sep=1.8cm]
        K_0(\Ker \restr{(-)}{\cat{X}})
            \arrow{r}{K_{0}(\iota)}
        &K_0(\rmod{\cat{C}})
            \arrow{r}{K_0(\restr{(-)}{\cat{X}})} 
        &K_0(\rmod{\cat{X}})
            \arrow{r}
        &0,
    \end{tikzcd}
    \]
    where $K_{0}(\iota)$ is the 
    mapping given by $[\fun{M}]\mapsto [\fun{M}]$ induced by the canonical inclusion 
    $\iota\colon \Ker \restr{(-)}{\cat{X}} \into \rmod{\cat{C}}$. 
    Moreover, since
    \[
        \Ker \restr{(-)}{\cat{X}}
            = \Set{\fun{M}\in\rmod{\cat{C}} | \restr{\fun{M}}{\cat{X}}=0 },
    \]
    we have that $\Ker K_0(\restr{(-)}{\cat{X}})=\Im K_{0}(\iota)$ is generated by $\Set{ [\fun{M}] | \fun{M}\in\rmod{\cat{C}} \text{ and } \restr{\fun{M}}{\cat{X}}=0 }$.
\end{proof}

\section{The additivity formula with error term}
\label{sec:main-results}

In this section, we prove \cref{thm:F}, the additivity formula with error term for our index. The first key step is to prove \cref{thm:G}.

We fix the following setup throughout \cref{sec:main-results}, which is a special case of \cref{setup_additive}. 
Additionally, we note  \cref{setup:cluster-category} is a special case of \cref{setup:section3}. 
Recall from \cref{sec:conventions} that we assume an additive subcategory is closed under direct summands.

\begin{setup}\label{setup:section3}
    Let $(\cat{C},\sus,\triangle)$ be a skeletally small triangulated category with split idempotents. 
    We also suppose $\cat{X}\sse\cat{C}$ is a contravariantly finite, additive subcategory.
\end{setup}

We denote the extriangulated category corresponding to the triangulated category $(\cat{C},\sus,\triangle)$ by $(\cat{C},\BE,\fs)$ (see \cref{sec:triangulated-is-extriangulated}).

Recall that $\yoneda \colon \cat{C} \to \rmod{\cat{C}}$ denotes the Yoneda embedding (see \cref{notation_additive}\ref{item_Yoneda}). 
Under \cref{setup:section3}, we have that $\rmod{\cat{C}}$ is a Frobenius abelian category, where the projective-injective objects are precisely those of the form $\yoneda C$ for some $C\in\cat{C}$; see \cite[Cor.~5.1.23]{Neeman-triangulated-categories}. 
Using \cref{lemma_restriction_finpres}, we denote by $\yoneda _{\cat{X}}\colon\cat{C} \to \rmod{\cat{X}}$ the restricted Yoneda functor, which is the composition
\[
\begin{tikzcd}
\cat{C}
    \arrow{dr}[swap]{\yoneda }
    \arrow{rr}{\yoneda _{\cat{X}}}
    &
& \rmod{\cat{X}}. \\
& \rmod{\cat{C}}
    \arrow{ur}[swap]{\restr{(-)}{\cat{X}}}
&
\end{tikzcd}
\]
For each $X\in\cat{X}$, we have that $\yoneda _{\cat{X}}(X) = \restr{\cat{C}(-,X)}{\cat{X}} = \cat{X}(-,X)$ 
is a projective object, and just like in \cref{notation_additive}\ref{item_Yoneda} all projective objects in $\rmod{\cat{X}}$ arise in this way up to isomorphism. (Note that idempotents split in $\cat{X}$ because they do so in $\cat{C}$ and $\cat{X}$ is closed under direct summands.)

\subsection{Defining \texorpdfstring{$\theta_{\cat{C}}$}{thetaC}}

The aim of this subsection is to produce a group homomorphism $\theta_{\cat{C}}\colon K_{0}(\rmod{\cat{C}})\to K^{\sp}_{0}(\cat{C})
$, which we do via three lemmas, and prove \cref{thm:G}.
As $\cat{C}$ is a triangulated category, we see that $\yoneda $ and  $\yoneda _{\cat{X}}$ are both cohomological functors, taking triangles in $\cat{C}$ to long exact sequences of functors; see e.g.\ \cite[Lem.~5.1.17]{Neeman-triangulated-categories}. Thus we immediately have the first lemma.

\begin{lem}
\label{lem:lemma2}
Each triangle 
\begin{equation}\label{eqn:triangle2}
\begin{tikzcd}
A \arrow{r}{a} & B \arrow{r}{b} & C	 \arrow{r}{c} & \sus A.
\end{tikzcd}
\end{equation}
in $\cat{C}$ induces an exact sequence
\begin{equation}\label{eqn:2b}
    \begin{tikzcd}[column sep=1.5cm]
        \yoneda A 
            \arrow{r}{\yoneda a}
        & \yoneda B 
            \arrow{r}{\yoneda b}
        & \yoneda C 
            \arrow{r}{\im\yoneda c}
        & \Im \yoneda c
            \arrow{r}
        & 0
    \end{tikzcd}
\end{equation}
in $\rmod{\cat{C}}$. 
Moreover, any object $\fun{M}\in\rmod{\cat{C}}$ is isomorphic to $\Im \yoneda c$ for some morphism $c\colon C\to \sus A$ in a triangle 
\eqref{eqn:triangle2}.
\end{lem}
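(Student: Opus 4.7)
The statement consists of two assertions, which I would address in sequence using the fact that the Yoneda functor $\yoneda \colon \cat{C} \to \rmod{\cat{C}}$ is cohomological.

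\emph{First part.} Since $\rmod{\cat{C}}$ is abelian (by \cref{notation_additive}\ref{item_modC}, as $\cat{C}$ is skeletally small, idempotent complete, and has weak kernels, the latter because $\cat{C}$ is triangulated), and $\yoneda $ is cohomological (applying it to a triangle yields a long exact sequence, see e.g.\ \cite[Lem.~5.1.17]{Neeman-triangulated-categories}), the triangle \eqref{eqn:triangle2} induces an exact sequence
\[
\begin{tikzcd}[column sep=1.2cm]
\yoneda A \arrow{r}{\yoneda a} & \yoneda B \arrow{r}{\yoneda b} & \yoneda C \arrow{r}{\yoneda c} & \yoneda \sus A
\end{tikzcd}
\]
in $\rmod{\cat{C}}$. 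I would then factor $\yoneda c$ through its image as $\yoneda c = (\iota)\circ(\im \yoneda c)$, where $\im \yoneda c\colon \yoneda C \onto \Im \yoneda c$ is the canonical epimorphism and $\iota\colon \Im \yoneda c \into \yoneda \sus A$ is the canonical monomorphism. Since $\iota$ is mono, $\Ker(\im \yoneda c) = \Ker(\yoneda c) = \Im(\yoneda b)$, which yields the exactness of \eqref{eqn:2b} at $\yoneda C$, while exactness at $\yoneda B$ is already given, and surjectivity of $\im \yoneda c$ gives exactness at $\Im \yoneda c$.

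\emph{Second part.} Given any $\fun{M}\in\rmod{\cat{C}}$, use the definition (\cref{notation_additive}\ref{item_modC}) to obtain a finite presentation
\[
\begin{tikzcd}
\yoneda B \arrow{r}{\beta} & \yoneda C \arrow{r} & \fun{M} \arrow{r} & 0.
\end{tikzcd}
\]
Since $\yoneda $ is full and faithful (\cref{notation_additive}\ref{item_Yoneda}), $\beta = \yoneda b$ for a unique morphism $b\colon B \to C$ in $\cat{C}$. Complete $b$ to a triangle
$A \xrightarrow{a} B \xrightarrow{b} C \xrightarrow{c} \sus A$
in $\cat{C}$. Applying the first part, the sequence $\yoneda B \xrightarrow{\yoneda b} \yoneda C \to \Im \yoneda c \to 0$ is exact. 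Hence $\fun{M} \cong \Cok(\yoneda b) \cong \Im \yoneda c$ by the universal property of the cokernel.

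\emph{Expected obstacles.} Both parts are essentially formal, relying only on the cohomological nature of $\yoneda $, the full faithfulness of $\yoneda $, and completion of morphisms to triangles in $\cat{C}$. The only minor subtlety is ensuring that $\rmod{\cat{C}}$ is abelian so that the image factorisation is well-defined; this is handled by invoking \cref{setup:section3} together with \cref{notation_additive}\ref{item_modC}.
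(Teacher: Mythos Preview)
Your proposal is correct and follows essentially the same approach as the paper: both use that $\yoneda$ is cohomological to obtain the exact sequence, and both prove the second assertion by taking a finite presentation of $\fun{M}$, completing the induced morphism $b$ to a triangle, and identifying $\fun{M}$ with $\Im\yoneda c$ as cokernels of $\yoneda b$. The only cosmetic difference is that the paper first observes exactness in $\rMod{\cat{C}}$ and then notes this forces $\Im\yoneda c\in\rmod{\cat{C}}$, whereas you work directly in the abelian category $\rmod{\cat{C}}$; both are valid.
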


\begin{proof}
It is clear that \eqref{eqn:2b} is an exact sequence in $\rMod{\cat{C}}$ since $\yoneda $ is cohomological. 
In particular, it demonstrates that $\Im \yoneda c$ is finitely presented, and hence \eqref{eqn:2b} is exact in $\rmod{\cat{C}}$.

Now let $\fun{M}\in\rmod{\cat{C}}$ be arbitrary. Then there is an exact sequence
\[
\begin{tikzcd}
    \yoneda B
        \arrow{r}{\yoneda b}
    &\yoneda C
        \arrow{r}{}
    &\fun{M}
        \arrow{r}{}
    &0.
\end{tikzcd}
\]
The morphism $b\colon B \to C$ is part of a triangle 
\eqref{eqn:triangle2}, 
so we also have the exact sequence \eqref{eqn:2b}. Since both $\fun{M}$ and $\Im \yoneda c$ are cokernels of $\yoneda b$, we have $\fun{M} \cong \Im \yoneda c$.
\end{proof}

The proof of the following lemma is inspired by the proof of \cite[Lem.~3.9]{CGMZ}.
The proof uses that since $\sus\colon \cat{C}\to \cat{C}$ is a triangulated automorphism of $\cat{C}$, it induces an exact automorphism
$\bbsus^{-1}\colon \rMod{\cat{C}} \to \rMod{\cat{C}}$ given by precomposition with $\sus$. That is, 
for an object $\fun{M}\in\rMod{\cat{C}}$, we have 
$\bbsus^{-1}\fun{M} \deff \fun{M}\circ\sus$ and, 
for a natural transformation
$\alpha\colon \fun{M} \to \fun{N}$, we have 
$(\bbsus^{-1}\alpha)_{C} \deff \alpha_{\sus C}$ for each $C\in\cat{C}$.
See \cite[Sec.~2]{Krause-cohomological-length-functions} or \cite[Sec.~3.1]{CGMZ}.
Furthermore, we observe here that $\yoneda (\sus^{-1}C)$ is naturally isomorphic to $\bbsus^{-1}(\yoneda C)$. 
The automorphism $\bbsus^{-1}$ has inverse $\bbsus$ which is precomposition by $\sus^{-1}$, and we have $\yoneda (\sus C) \cong \bbsus(\yoneda C)$.
The notation $\bbsus$ is borrowed from Nkansah \cite{Nkansah}.

\begin{lem}
\label{lem:lemma4}
If 
$\begin{tikzcd}[column sep=0.5cm,cramped]
A \arrow{r}{a} & B \arrow{r}{b} & C	 \arrow{r}{c} & \sus A
\end{tikzcd}$
and 
$\begin{tikzcd}[column sep=0.5cm,cramped]
A' \arrow{r}{a'} & B' \arrow{r}{b'} & C'	 \arrow{r}{c'} & \sus A'
\end{tikzcd}$
are triangles in $\cat{C}$ with an isomorphism $\phi\colon \Im \yoneda c \overset{\cong}{\longrightarrow} \Im \yoneda c'$, then in $K^{\sp}_{0}(\cat{C})$ we have  
\[
[A]^{\sp} - [B]^{\sp} + [C]^{\sp} 
    = [A']^{\sp} - [B']^{\sp} + [C']^{\sp}.
\]
\end{lem}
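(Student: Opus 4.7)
The plan is to transfer the problem into the Frobenius abelian category $\rmod{\cat{C}}$, in which every representable $\yoneda X$ is both projective and injective, and to exploit the mapping cone of a chain map between two $4$-term exact sequences. Applying \cref{lem:lemma2} to the triangle $A \to B \to C \xrightarrow{c} \sus A$ and separately to its rotation $\sus^{-1}C \xrightarrow{-\sus^{-1}c} A \to B \to C$ yields the exact sequence
\[
    0 \to \bbsus^{-1}M \to \yoneda A \to \yoneda B \to \yoneda C \to M \to 0,
\]
where $M \deff \Im \yoneda c$, and analogously with primes for the second triangle (with $M' \deff \Im \yoneda c'$). The isomorphism $\phi$ induces $\bbsus^{-1}\phi\colon \bbsus^{-1}M \to \bbsus^{-1}M'$, so after identification the two sequences share both endpoints.

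I then regard each sequence as a $3$-term complex of projective-injectives $P_\bullet$, $Q_\bullet$ in degrees $0$, $1$, $2$ with homology $H_0 \cong M$ and $H_2 \cong \bbsus^{-1}M$ (and $H_1 = 0$). The key construction is a chain map $f\colon P_\bullet \to Q_\bullet$ lifting $\phi$ on $H_0$ that is also a quasi-isomorphism. One first lifts $\phi$ starting from degree $0$ using projectivity of each $\yoneda(-)$, then argues via naturality of the cosyzygy functor (or a two-sided construction that also uses injectivity of $\yoneda(-)$) that the induced map on $H_2$ equals $\bbsus^{-1}\phi$ and is thus an isomorphism. The mapping cone $\Cone(f)$ is then a bounded acyclic complex of projective-injectives in the Frobenius category $\rmod{\cat{C}}$, and is therefore split exact.

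Split exactness of $\Cone(f)$ yields the alternating-sum identity
\[
    [\yoneda A]^{\sp} - [\yoneda B \oplus \yoneda A']^{\sp} + [\yoneda C \oplus \yoneda B']^{\sp} - [\yoneda C']^{\sp} = 0
\]
in $K_{0}^{\sp}(\proj(\rmod{\cat{C}}))$. Since $\cat{C}$ has split idempotents, the Yoneda embedding identifies $\cat{C}$ with $\proj(\rmod{\cat{C}})$ and therefore $K_0^{\sp}(\cat{C}) \cong K_0^{\sp}(\proj(\rmod{\cat{C}}))$; rearranging yields $[A]^{\sp} - [B]^{\sp} + [C]^{\sp} = [A']^{\sp} - [B']^{\sp} + [C']^{\sp}$ in $K_0^{\sp}(\cat{C})$, as required.

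The main obstacle will be the quasi-isomorphism property of $f$: a generic lift of $\phi$ on $H_0$ controls only one of the two non-vanishing homology groups of $P_\bullet$, so it is not automatic that the induced map on $H_2$ is an iso. The Frobenius property of $\rmod{\cat{C}}$ is precisely what is needed to resolve this, via the injective extension property of representables, which enables a two-sided lifting argument pinning down both $H_0(f) = \phi$ and $H_2(f) = \bbsus^{-1}\phi$ simultaneously. An alternative, closer in spirit to \cite{CGMZ}, would apply the generalised Schanuel lemma directly to the two $4$-term sequences and then descend the resulting stable isomorphism to $K_0^{\sp}(\cat{C})$ using the equivalence $\cat{C} \simeq \proj(\rmod{\cat{C}})$.
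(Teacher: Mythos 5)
Your endgame (split-exact mapping cone of projective-injectives, then descending along the fully faithful additive functor $\yoneda\colon\cat{C}\to\proj(\rmod{\cat{C}})$) is fine and matches the paper's conclusion, but the central step --- producing a chain map $f\colon P_\bullet\to Q_\bullet$ that is an isomorphism on \emph{both} $H_0$ and $H_2$ --- has a genuine gap. A lift of $\phi$ built degreewise from projectivity of the $\yoneda(-)$'s controls $H_2(f)$ only up to morphisms factoring through projectives: since $Q_\bullet$ is not acyclic in degree $2$, the comparison theorem gives neither uniqueness up to homotopy nor any reason why $H_2(f)$ should equal $\bbsus^{-1}\phi$, or even be an isomorphism in $\rmod{\cat{C}}$ (it is only a \emph{stable} isomorphism, which does not make $\Cone(f)$ acyclic). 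The proposed ``two-sided'' repair also fails as stated: if you prescribe the maps in degrees $0$ and $2$ compatibly with $\phi$ and $\bbsus^{-1}\phi$, filling the middle square requires extending a map $\Im\yoneda a\to\Im\yoneda a'$ along $\Im\yoneda a\hookrightarrow\yoneda B$ (needing $\Im\yoneda a'$ injective) or lifting a map $\Im\yoneda b\to\Im\yoneda b'$ through $\yoneda B'\onto\Im\yoneda b'$ (needing $\Im\yoneda b$ projective); neither holds, and the Frobenius property of $\rmod{\cat{C}}$ alone does not kill this obstruction. The paper's proof supplies exactly the missing input from the triangulated structure: it lifts $\phi$ to morphisms $g\colon C\to C'$ and $\sus e\colon\sus A\to\sus A'$ using projectivity of $\yoneda C$, injectivity of $\yoneda\sus A'$ and fullness of $\yoneda$, checks $(\sus e)c=c'g$, and then invokes axiom (TR3) to obtain the middle morphism $f\colon B\to B'$ in $\cat{C}$; only after that does one have an isomorphism of $3$-extensions to which a Schanuel-type result (Krause's Lemma A.2) applies. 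Your argument never uses (TR3), and without it I see no way to force the comparison map to be a quasi-isomorphism.

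Your fallback suggestion (apply a generalised Schanuel lemma directly to the two $4$-term sequences) does not close the gap either: with only the two end terms known to be isomorphic, the Schanuel-type conclusion necessarily involves those end terms, giving an isomorphism of the form $\bbsus^{-1}\Im\yoneda c\oplus\yoneda A'\oplus\yoneda B\oplus\yoneda C'\cong\bbsus^{-1}\Im\yoneda c'\oplus\yoneda A\oplus\yoneda B'\oplus\yoneda C$. Since $\rmod{\cat{C}}$ is not Krull--Schmidt under \cref{setup:section3}, you cannot cancel the non-projective summands, and passing to $K^{\sp}_0(\rmod{\cat{C}})$ does not help because the natural map $K^{\sp}_0(\cat{C})\to K^{\sp}_0(\rmod{\cat{C}})$ is not known to be injective. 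The version of Schanuel that does work (and is what the paper cites) takes as input a \emph{morphism} of the two extensions which is an isomorphism at both ends --- which brings you back to needing (TR3) to construct it.
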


\begin{proof}
We claim that there is a commutative diagram in $\rmod{\cat{C}}$ with exact rows as follows.
\begin{equation}\label{eqn:4c}
\begin{tikzcd}[column sep=0.5cm]
\yoneda A
    \arrow{rr}{\yoneda a}
    \arrow{dd}{\yoneda e}
&{}
&\yoneda B
    \arrow{rr}{\yoneda b}
    \arrow{dd}{\yoneda f}
&{}
&\yoneda C
    \arrow{rr}[xshift=-22pt]{\yoneda c}
    \arrow[two heads]{dr}{\beta}
    \arrow{dd}{\yoneda g}
&{}
&\yoneda \sus A
\arrow{dd}{\yoneda \sus e}
\\
{}
&{}
&{}
&{}
&{}
&\Im\yoneda c
    \arrow[hook]{ur}{\alpha}
    \arrow{dd}[yshift=-15pt]{\cong}[yshift=-15pt,swap]{\phi}
&{}
\\
\yoneda A'
    \arrow{rr}{\yoneda a'}
&{}
&\yoneda B'
    \arrow{rr}{\yoneda b'}
&{}
&\yoneda C'
    \arrow[-]{r}{\yoneda c'}
    \arrow[two heads]{dr}[swap]{\beta'}
&{}
    \arrow{r}
&\yoneda \sus A'
\\
{}
&{}
&{}
&{}
&{}
&\Im\yoneda c'
    \arrow[hook]{ur}[swap]{\alpha'}
&{}
\end{tikzcd}
\end{equation}
Indeed, the exact rows come from $\yoneda $ being cohomological. 
Since $\yoneda \sus A$ is injective and $\yoneda$ is full, there is a morphism 
$\sus e\colon \sus A\to \sus A'$ such that 
$(\yoneda \sus e)\alpha = \alpha' \phi$.
Similarly, $\yoneda C$ being projective implies there is a morphism $g\colon C\to C'$ such that
$\phi\beta = \beta'\yoneda g$ (again using that $\yoneda$ is full). 
As $\yoneda $ is faithful, we see that 
$(\sus e)c = c' g$. Axiom (TR3) for triangulated categories (see e.g.\ \cite[Def.~1.1.2]{Neeman-triangulated-categories}) yields a morphism $f\colon B \to B'$ so that 
\begin{equation}\label{eqn:4d}
\begin{tikzcd}
A
    \arrow{r}{a}
    \arrow{d}{e}
& B
    \arrow{r}{b}
    \arrow[dotted]{d}{f}
& C 
    \arrow{r}{c}
    \arrow{d}{g}
& \sus A
    \arrow{d}{\sus e}
\\
A'
    \arrow{r}{a'}
& B'
    \arrow{r}{b'}
& C'
    \arrow{r}{c'}
& \sus A'
\end{tikzcd}
\end{equation}
is a morphism of triangles. Applying $\yoneda $ to \eqref{eqn:4d} gives \eqref{eqn:4c}.

Using that $\bbsus^{-1}$ is an exact automorphism and $\bbsus^{-1}(\yoneda X) \cong \yoneda (\sus^{-1} X)$, we can augment the left hand side of \eqref{eqn:4c} to obtain the following isomorphism of $3$-extensions in $\rmod{\cat{C}}$. 
\begin{equation}\label{eqn:4e}
\begin{tikzcd}
\bbsus^{-1}\Im \yoneda c 
    \arrow[hook]{r}
    \arrow{d}{\bbsus^{-1}\phi}[swap]{\cong}
&\yoneda A
    \arrow{r}{\yoneda a}
    \arrow{d}{\yoneda e}
&\yoneda B
    \arrow{r}{\yoneda b}
    \arrow{d}{\yoneda f}
&\yoneda C
    \arrow[two heads]{r}{\beta}
    \arrow{d}{\yoneda g}
& \Im \yoneda c
    \arrow{d}{\phi}[swap]{\cong}
\\
\bbsus^{-1}\Im \yoneda c'
    \arrow[hook]{r}
&\yoneda A'
    \arrow{r}{\yoneda a'}
&\yoneda B'
    \arrow{r}{\yoneda b'}
&\yoneda C'
    \arrow[two heads]{r}{\beta'}
& \Im \yoneda c'
\end{tikzcd}
\end{equation}
Thus, by \cite[Lem.~A.2]{Krause-cohomological-length-functions}, we have 
$\yoneda C \oplus \yoneda B' \oplus \yoneda A \cong \yoneda C' \oplus \yoneda B \oplus \yoneda A'$ in $\rmod{\cat{C}}$. 
Since $\yoneda $ is additive and fully faithful, we have 
$C\oplus B' \oplus A \cong C' \oplus B \oplus A'$ in $\cat{C}$, and hence
$[C]^{\sp} + [B']^{\sp} + [A]^{\sp} = [C']^{\sp} + [B]^{\sp} + [A']^{\sp}$ in $K^{\sp}_{0}(\cat{C})$. Rearranging this identity finishes the proof.
\end{proof}

The following lemma is due to the proof of \cite[Thm.~3.10]{CGMZ}, and we reproduce a version of the proof therein.

\begin{lem}
\label{lem:lemma3}
Suppose 
$\begin{tikzcd}[column sep=0.5cm]
    0
        \arrow{r}
    & \fun{M'}
        \arrow{r}{\alpha}
    & \fun{M}
        \arrow{r}{\beta}
    & \fun{M''}
        \arrow{r}
    & 0
\end{tikzcd}$
is a short exact sequence in $\rmod{\cat{C}}$. 
Then there exist triangles
\begin{align}
    \begin{tikzcd}[ampersand replacement=\&]
    A' \arrow{r}{a'} \& B' \arrow{r}{b'} \& C'	 \arrow{r}{c'} \& \sus A',
    \end{tikzcd} \label{eqn:3ai}\\
    \begin{tikzcd}[ampersand replacement=\&]
    A'' \arrow{r}{a''} \& B'' \arrow{r}{b''} \& C''	 \arrow{r}{c''} \& \sus A'' \label{eqn:3aii}
    \end{tikzcd}
\end{align}
in $\cat{C}$, satisfying 
$\fun{M'} \cong \Im \yoneda c'$
and 
$\fun{M''} \cong \Im \yoneda c''$, 
such that there is also a triangle 
\begin{equation}\label{eqn:3b}
\begin{tikzcd}[ampersand replacement=\&]
    A'\oplus A'' \arrow{r}{} \& B'\oplus B'' \arrow{r}{} \& C'\oplus C''	 \arrow{r}{c} \& \sus A'\oplus \sus A''
\end{tikzcd}
\end{equation}
with $\fun{M} \cong \Im \yoneda c$.
\end{lem}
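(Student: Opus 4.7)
The approach is to combine the projective presentations of $\fun{M'}$ and $\fun{M''}$ supplied by \cref{lem:lemma2} through a horseshoe-lemma construction in $\rmod{\cat{C}}$, lift the resulting upper-triangular presentation map back to $\cat{C}$ via the fully faithful Yoneda embedding, and complete it to the desired triangle \eqref{eqn:3b}.

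First, I apply \cref{lem:lemma2} separately to $\fun{M'}$ and $\fun{M''}$ to obtain triangles \eqref{eqn:3ai} and \eqref{eqn:3aii} with $\fun{M'} \cong \Im \yoneda c'$ and $\fun{M''} \cong \Im \yoneda c''$. These yield projective presentations
\[
\yoneda B' \xrightarrow{\yoneda b'} \yoneda C' \twoheadrightarrow \fun{M'} \to 0 \quad\text{and}\quad \yoneda B'' \xrightarrow{\yoneda b''} \yoneda C'' \twoheadrightarrow \fun{M''} \to 0
\]
in the abelian category $\rmod{\cat{C}}$. Running the horseshoe lemma on $0 \to \fun{M'} \xrightarrow{\alpha} \fun{M} \xrightarrow{\beta} \fun{M''} \to 0$ then yields a projective presentation
\[
\yoneda B' \oplus \yoneda B'' \xrightarrow{\theta} \yoneda C' \oplus \yoneda C'' \twoheadrightarrow \fun{M} \to 0
\]
with $\theta$ of the upper-triangular form $\left(\begin{smallmatrix} \yoneda b' & \xi \\ 0 & \yoneda b'' \end{smallmatrix}\right)$ for some $\xi \colon \yoneda B'' \to \yoneda C'$. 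Concretely, I lift $\yoneda C'' \twoheadrightarrow \fun{M''}$ through $\beta$ (using projectivity of $\yoneda C''$) and combine with $\alpha \circ (\yoneda C' \twoheadrightarrow \fun{M'})$ to obtain a surjection $\yoneda C' \oplus \yoneda C'' \twoheadrightarrow \fun{M}$; the snake lemma identifies its kernel as an extension of $\Im \yoneda b''$ by $\Im \yoneda b'$, and projectivity of $\yoneda B''$ delivers $\theta$. By full faithfulness of Yoneda, $\xi = \yoneda x$ for a unique $x \colon B'' \to C'$, so $\theta = \yoneda \tilde{b}$ where $\tilde{b} \deff \left(\begin{smallmatrix} b' & x \\ 0 & b'' \end{smallmatrix}\right) \colon B' \oplus B'' \to C' \oplus C''$.

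Next, I complete $\tilde{b}$ to a triangle in $\cat{C}$ and identify its cocone with $A' \oplus A''$. The specific form of the horseshoe construction forces $\xi \yoneda a''$ to land in $\Im \yoneda b'$: its second coordinate in $\yoneda C' \oplus \yoneda C''$ equals $\yoneda (b'' a'') = 0$. Hence $x a'' = b' y$ for some $y \colon A'' \to B'$, which in turn gives $c' x a'' = c' b' y = 0$, so $c' x = -c_{12} b''$ for some $c_{12} \colon C'' \to \sus A'$. Setting $\tilde{a} \deff \left(\begin{smallmatrix} a' & -y \\ 0 & a'' \end{smallmatrix}\right)$ and $c \deff \left(\begin{smallmatrix} c' & c_{12} \\ 0 & c'' \end{smallmatrix}\right)$, a direct matrix computation verifies $\tilde{b} \tilde{a} = 0$ and $c \tilde{b} = 0$. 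An application of the octahedral axiom to the composites $B' \xrightarrow{\iota_1} B' \oplus B'' \xrightarrow{\tilde{b}} C' \oplus C''$ and $B' \oplus B'' \xrightarrow{\tilde{b}} C' \oplus C'' \xrightarrow{\pi_2} C''$ (equivalently, a $3 \times 3$-diagram argument using the split triangles on the direct sums) then shows that the candidate quadruple $(A' \oplus A'', \tilde{a}, \tilde{b}, c)$ is isomorphic to the actual triangle completing $\tilde{b}$, yielding \eqref{eqn:3b}. Applying the cohomological functor $\yoneda$ finally gives $\Im \yoneda c \cong \Cok \theta \cong \fun{M}$.

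The main obstacle is identifying the cocone of $\tilde{b}$ with $A' \oplus A''$. A cocone $D$ of $\tilde{b}$ naturally fits into a triangle $A' \to D \to A'' \to \sus A'$ extracted via the morphisms of triangles induced by the split decompositions of $B' \oplus B''$ and $C' \oplus C''$, but this triangle need not split on the nose in the triangulated category. The auxiliary morphisms $y$ and $c_{12}$ supplied by the horseshoe construction provide precisely the extra compatibility data needed to realize $D \cong A' \oplus A''$.
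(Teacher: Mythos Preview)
Your overall architecture matches the paper's: obtain presentations of $\fun{M'},\fun{M''}$ from \cref{lem:lemma2}, run the Horseshoe Lemma to get an upper-triangular presentation of $\fun{M}$, lift via Yoneda to a morphism $\tilde{b}\colon B'\oplus B''\to C'\oplus C''$ in $\cat{C}$, and then identify the cocone of $\tilde{b}$ with $A'\oplus A''$. The discrepancy lies entirely in that last identification, which you yourself flag as ``the main obstacle'' --- and your resolution of it is where the argument breaks down.

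Having $\tilde{b}\tilde{a}=0$ and $c\tilde{b}=0$ does \emph{not} make $(A'\oplus A'',\tilde{a},\tilde{b},c)$ a triangle; your $y$ and $c_{12}$ are produced by two \emph{independent} liftings (through $b'$ and through $b''$), and nothing forces the third compatibility $(\sus a')c_{12}=(\sus y)c''$ needed for the upper-triangular quadruple to be distinguished. The octahedral/$3\times 3$ argument you sketch does yield a triangle $A'\to D\to A''\xrightarrow{\delta}\sus A'$ with $D$ the genuine cocone of $\tilde{b}$, but it gives no reason for $\delta$ to vanish: for a generic off-diagonal $x$ this triangle will \emph{not} split. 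The ``extra compatibility data'' you invoke is exactly the morphism $x$ coming from the Horseshoe construction, and you have not explained how that datum, by itself, kills $\delta$.

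The paper closes this gap by returning to $\rmod{\cat{C}}$. After applying May's $3\times 3$ lemma to produce the triangle $A'\to A\to A''\to\sus A'$, it pushes the whole $3\times 3$ diagram through $\yoneda$ and then runs the Nine Lemma several times across the resulting grid of short exact sequences. The point is that the rightmost column $0\to\fun{M'}\to\fun{M}\to\fun{M''}\to 0$ is short exact \emph{by hypothesis}, and $\bbsus^{-1}$ carries this to short exactness of the leftmost column; chasing exactness inward then forces $0\to\yoneda A'\to\yoneda A\to\yoneda A''\to 0$ to be short exact. Now projectivity of $\yoneda A''$ splits it, and Yoneda being fully faithful pulls the splitting back to $\cat{C}$. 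This step genuinely uses the abelian category $\rmod{\cat{C}}$ and the original short exact sequence --- it is not a purely triangulated-category manoeuvre, which is why your attempt to stay inside $\cat{C}$ with octahedral manipulations cannot succeed as written.
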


\begin{proof}
\cref{lem:lemma2} provides triangles \eqref{eqn:3ai} and \eqref{eqn:3aii} that, respectively, induce exact sequences
\begin{align}
    \begin{tikzcd}[column sep=1.5cm, ampersand replacement=\&]
        \yoneda A' 
            \arrow{r}{\yoneda a'}
        \& \yoneda B'
            \arrow{r}{\yoneda b'}
        \& \yoneda C'
            \arrow{r}{\im\yoneda c'}
        \& \fun{M}'
            \arrow{r}
        \& 0,
    \end{tikzcd}\label{eqn:3ci}\\
    \begin{tikzcd}[column sep=1.5cm, ampersand replacement=\&]
        \yoneda A'' 
            \arrow{r}{\yoneda a''}
        \& \yoneda B'' 
            \arrow{r}{\yoneda b''}
        \& \yoneda C'' 
            \arrow{r}{\im\yoneda c''}
        \& \fun{M}''
            \arrow{r}
        \& 0
    \end{tikzcd}\label{eqn:3cii}
\end{align}
in $\rmod{\cat{C}}$. 
Objects of the form $\yoneda X$ for $X\in\cat{C}$ are projective in $\rmod{\cat{C}}$, so the Horseshoe Lemma (see e.g.\ \cite[Lem.~IX.7.8]{Aluffi-Chapter0}) yields a commutative diagram
\begin{equation}\label{eqn:3d}
\begin{tikzcd}
 0  \arrow{d}{}& 0  \arrow{d}{}& 0  \arrow{d}{}&   \\
  \yoneda B' \arrow{d}{\begin{psmallmatrix}\id{\yoneda B'} \\ 0\end{psmallmatrix}}\arrow{r}{\yoneda b'}
  & \yoneda C' \arrow{d}{\begin{psmallmatrix}\id{\yoneda C'} \\ 0\end{psmallmatrix}} \arrow{r}{\im\yoneda c'}
  & \fun{M'}  \arrow{d}{\alpha}\arrow{r}{}
  &  0 \\
  \yoneda B' \oplus \yoneda B'' 
  	\arrow{d}{\begin{psmallmatrix}0 \amph \id{\yoneda B''}\end{psmallmatrix}}
	\arrow{r}{\yoneda b}
  & \yoneda C' \oplus \yoneda C''  
  	\arrow{d}{\begin{psmallmatrix}0 \amph \id{\yoneda C''}\end{psmallmatrix}}
	\arrow{r}{}
  & \fun{M}  
  	\arrow{d}{\beta}\arrow{r}{}
  &  0 \\
  \yoneda B'' \arrow{d}{}\arrow{r}{\yoneda b''}
  & \yoneda C''  \arrow{d}{}\arrow{r}{\im\yoneda c''}
  & \fun{M''}  \arrow{d}{}\arrow{r}{}
  &  0 \\
 0  & 0  & 0  &  
\end{tikzcd}
\end{equation}
in $\rmod{\cat{C}}$ with exact rows and columns. 
Note that the morphism $b\colon B'\oplus B'' \to C' \oplus C''$ is obtained using that $\yoneda $ is fully faithful and additive. 
In particular, the two leftmost columns of \eqref{eqn:3d} are induced from the following morphism of split triangles in $\cat{C}$.
\begin{equation}\label{eqn:3e}
\begin{tikzcd}
B' \arrow{d}{\begin{psmallmatrix}\id{B'} \\ 0\end{psmallmatrix}}\arrow{r}{b'}& C' \arrow{d}{\begin{psmallmatrix}\id{C'} \\ 0\end{psmallmatrix}}\\
B' \oplus B'' \arrow{d}{\begin{psmallmatrix}0 \amph \id{B''}\end{psmallmatrix}}\arrow{r}{b}& C' \oplus C'' \arrow{d}{\begin{psmallmatrix}0 \amph \id{C''}\end{psmallmatrix}}\\
B''  \arrow{d}{0}\arrow{r}{b''}& C'' \arrow{d}{0}\\
\sus B' \arrow{r}{\sus b'}& \sus C'
\end{tikzcd}
\end{equation}
Thus, by the dual of \cite[Lem.~2.6]{May-the-additivity-of-traces-in-triangulated-categories},
we can complete \eqref{eqn:3e} to the commutative diagram
\begin{equation}\label{eqn:3f}
\begin{tikzcd}
A'
	\arrow{r}{a'}
	\arrow{d}{}
&B' 
	\arrow{d}{\begin{psmallmatrix}\id{B'} \\ 0\end{psmallmatrix}}
	\arrow{r}{b'}
& C' 
	\arrow{d}{\begin{psmallmatrix}\id{C'} \\ 0\end{psmallmatrix}}\\
A
	\arrow{r}{}
	\arrow{d}{}
&B' \oplus B'' 
	\arrow{d}{\begin{psmallmatrix}0 \amph \id{B''}\end{psmallmatrix}}
	\arrow{r}{b}
& C' \oplus C'' 
	\arrow{d}{\begin{psmallmatrix}0 \amph \id{C''}\end{psmallmatrix}}\\
A''
	\arrow{r}{a''}
	\arrow{d}{}
&B''  
	\arrow{d}{0}
	\arrow{r}{b''}
& C'' 
	\arrow{d}{0}\\
\sus A'
	\arrow{r}{\sus a'}
&\sus B' 
	\arrow{r}{\sus b'}
& \sus C' 
\end{tikzcd}
\end{equation}
in which each row is an $\fs$-conflation and each column is a triangle. 
In particular, the second row of \eqref{eqn:3f} is part of a triangle
\begin{equation}\label{eqn:3g}
\begin{tikzcd}
    A \arrow{r}{} & B' \oplus B''\arrow{r}{b} & C'\oplus C'' \arrow{r}{c} & \sus A.
    \end{tikzcd}
\end{equation}
We claim that \eqref{eqn:3g} can be used as triangle \eqref{eqn:3b}. 
Notice that $\fun{M}\cong \im\yoneda c$ by the exactness of the middle row of  \eqref{eqn:3d}.
Thus, it suffices to show that $A \cong A' \oplus A''$.

Applying $\yoneda $ to the top three rows of \eqref{eqn:3f} yields a commutative diagram in $\rmod{\cat{C}}$ consisting of three exact rows, in which the two rightmost columns are short exact. Splitting this diagram up into short exact sequences yields the following commutative diagram (that, for reference, has seven columns).
\begin{equation}\label{eqn:3h}
\begin{tikzcd}[column sep=0.8cm, scale cd=0.95]
{}
&\yoneda A'
	\arrow{rr}[xshift=-30pt]{\yoneda a'}
	\arrow{dd}{}
	\arrow[two heads]{dr}{}
&{}
&\yoneda B' 
	\arrow{rr}[xshift=-24pt]{\yoneda b'}
	\arrow{dd}{\begin{psmallmatrix}\id{\yoneda B'} \\ 0\end{psmallmatrix}}
	\arrow[two heads]{dr}{}
&{}
& \yoneda C' 
	\arrow{dd}{\begin{psmallmatrix}\id{\yoneda C'} \\ 0\end{psmallmatrix}}
	\arrow[two heads]{dr}[xshift=-13pt,yshift=5pt]{\im\yoneda c'}
&{}
\\
\bbsus^{-1}\fun{M'}
	\arrow[hook]{ur}{}
	\arrow{dd}{\bbsus^{-1}\alpha}
&{}
&\fun{L'}
	\arrow[hook]{ur}{}
	\arrow[crossing over]{dd}{}
&{}
&\fun{K'}
	\arrow[hook]{ur}{}
	\arrow[crossing over]{dd}{}
&{}
&\fun{M'}
	\arrow[hook]{dd}{\alpha}
\\
{}
&\yoneda A
	\arrow[-]{r}{}
	\arrow{dd}{}
	\arrow[two heads]{dr}{}
&{}
        \arrow{r}{}
&\yoneda B' \oplus \yoneda B'' 
	\arrow[-]{r}[xshift=-5pt]{\yoneda b}
	\arrow{dd}[yshift=-10pt]{\begin{psmallmatrix}0 \amph \id{\yoneda B''}\end{psmallmatrix}}
	\arrow[two heads]{dr}{}
&{}
        \arrow{r}
& \yoneda C' \oplus \yoneda C'' 
	\arrow{dd}[yshift=-10pt]{\begin{psmallmatrix}0 \amph \id{\yoneda C''}\end{psmallmatrix}}
	\arrow[two heads]{dr}{}
&{}
\\
\bbsus^{-1}\fun{M}
	\arrow[hook]{ur}{}
	\arrow{dd}{\bbsus^{-1}\beta}
&{}
&\fun{L}
	\arrow[hook]{ur}{}
	\arrow[crossing over]{dd}{}
&{}
&\fun{K}
	\arrow[hook]{ur}{}
	\arrow[crossing over]{dd}{}
&{}
&\fun{M}
	\arrow[two heads]{dd}{\beta}
\\
{}
&\yoneda A''
	\arrow[-]{r}[xshift=0pt]{\yoneda a''}
	\arrow[two heads]{dr}{}
&{}
        \arrow{r}
&\yoneda B''  
	\arrow[-]{r}[xshift=6pt]{\yoneda b''}
	\arrow[two heads]{dr}{}
&{}
        \arrow{r}
& \yoneda C'' 
	\arrow{dr}[xshift=-13pt,yshift=5pt]{\im\yoneda c''}
&{}
\\
\bbsus^{-1}\fun{M''}
	\arrow[hook]{ur}{}
&{}
&\fun{L''}
	\arrow[hook]{ur}{}
&{}
&\fun{K''}
	\arrow[hook]{ur}{}
&{}
&\fun{M''}
\end{tikzcd}
\end{equation}

We claim that the second column of \eqref{eqn:3h} is short exact. This is shown via multiple applications of the Nine Lemma (see e.g.\ \cite[Exer.~III.7.15]{Aluffi-Chapter0} and, in particular, \cite[Exer.~III.7.16]{Aluffi-Chapter0}).
Since columns six and seven are short exact, column five is short exact. 
Since columns four and five are short exact, so too is column three. 
Lastly, to see that column two is short exact, first note that column one is short exact because column seven is by assumption and 
$\bbsus^{-1}$ is exact. 
Second, as  
\begin{equation}\label{eqn:3i}
\begin{tikzcd}
A' \arrow{r} & A \arrow{r}& A''
\end{tikzcd}
\end{equation}
is an $\fs$-conflation, we have that the second column of \eqref{eqn:3h} is a complex. 
Thus, columns one and three being short exact imply column two is short exact by \cite[Exer.~III.7.16]{Aluffi-Chapter0}.

Since $\yoneda A''$ is projective, column two of \eqref{eqn:3h} splits. Moreover, as $\yoneda $ is fully faithful, we see that \eqref{eqn:3i} is a split $\fs$-conflation (i.e.\ a split short exact sequence) and the proof is complete.
\end{proof}

\begin{thm}[=\cref{thm:G}]
\label{thm:theorem5}
There is a group homomorphism 
$\theta_{\cat{C}}\colon K_{0}(\rmod{\cat{C}})\to K^{\sp}_{0}(\cat{C})
$
defined by 
$\theta_{\cat{C}}([\Im\yoneda c])
    = [A]^{\sp} - [B]^{\sp} + [C]^{\sp}$
where there is a triangle \eqref{eqn:triangle2} in $\cat{C}$.
\end{thm}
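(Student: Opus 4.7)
The strategy is to assemble the preceding three lemmas in a straightforward way; all the hard work is already packaged in Lemmas~\ref{lem:lemma2}, \ref{lem:lemma4}, and \ref{lem:lemma3}. I shall define $\theta_{\cat{C}}$ first on the free abelian group generated by isomorphism classes $[\fun{M}]$ of objects $\fun{M}\in\rmod{\cat{C}}$, verify well-definedness, and then check that the relations defining $K_{0}(\rmod{\cat{C}})$ go to zero.

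First, for each $\fun{M}\in\rmod{\cat{C}}$, \cref{lem:lemma2} supplies a triangle $A \xrightarrow{a} B \xrightarrow{b} C \xrightarrow{c} \sus A$ in $\cat{C}$ with $\fun{M} \cong \Im \yoneda c$. Set
\[
\theta_{\cat{C}}([\fun{M}]) \deff [A]^{\sp} - [B]^{\sp} + [C]^{\sp} \in K^{\sp}_{0}(\cat{C}).
\]
Independence of this assignment from the choice of triangle, and indeed from the choice of representative in the isomorphism class of $\fun{M}$, is exactly the content of \cref{lem:lemma4}: any two triangles whose images under $\yoneda$ give isomorphic cokernels (more precisely, isomorphic images of $\yoneda c$) yield the same alternating sum in $K^{\sp}_{0}(\cat{C})$. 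Hence $\theta_{\cat{C}}$ extends $\BZ$-linearly to a homomorphism from the free abelian group on isomorphism classes of objects in $\rmod{\cat{C}}$ to $K^{\sp}_{0}(\cat{C})$.

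It remains to check that the defining relations of $K_{0}(\rmod{\cat{C}})$ are sent to zero; that is, for every short exact sequence $0 \to \fun{M}' \xrightarrow{\alpha} \fun{M} \xrightarrow{\beta} \fun{M}'' \to 0$ in $\rmod{\cat{C}}$, one has $\theta_{\cat{C}}([\fun{M}]) = \theta_{\cat{C}}([\fun{M}']) + \theta_{\cat{C}}([\fun{M}''])$. This is where \cref{lem:lemma3} enters: it produces triangles \eqref{eqn:3ai} and \eqref{eqn:3aii} realising $\fun{M}' \cong \Im\yoneda c'$ and $\fun{M}'' \cong \Im\yoneda c''$, together with a triangle \eqref{eqn:3b} of the form $A'\oplus A'' \to B'\oplus B'' \to C'\oplus C'' \xrightarrow{c} \sus A'\oplus \sus A''$ realising $\fun{M} \cong \Im\yoneda c$. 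Computing with the latter representative:
\begin{align*}
\theta_{\cat{C}}([\fun{M}])
&= [A'\oplus A'']^{\sp} - [B'\oplus B'']^{\sp} + [C'\oplus C'']^{\sp} \\
&= \big([A']^{\sp} - [B']^{\sp} + [C']^{\sp}\big) + \big([A'']^{\sp} - [B'']^{\sp} + [C'']^{\sp}\big) \\
&= \theta_{\cat{C}}([\fun{M}']) + \theta_{\cat{C}}([\fun{M}'']),
\end{align*}
where the second equality uses that the class of a direct sum in $K^{\sp}_{0}(\cat{C})$ is the sum of the classes (by the definition of the split Grothendieck group).

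Thus $\theta_{\cat{C}}$ descends to a group homomorphism on $K_{0}(\rmod{\cat{C}})$ with the required formula. The only substantive step is confirming that the triangle produced by \cref{lem:lemma3} may be chosen with outer terms $A' \oplus A''$ and $C' \oplus C''$ splitting correctly; but this is precisely what \cref{lem:lemma3} guarantees, so no new obstacle arises.
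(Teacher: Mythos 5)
Your proposal is correct and follows essentially the same route as the paper: \cref{lem:lemma2} and \cref{lem:lemma4} give a well-defined assignment on isomorphism classes, and \cref{lem:lemma3} together with additivity of $[-]^{\sp}$ on direct sums shows the short-exact-sequence relations are killed, so the map descends to $K_{0}(\rmod{\cat{C}})$. The paper's proof is just a terser statement of exactly this argument.
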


\begin{proof}
Let $\Iso(\rmod{\cat{C}})$ denote the set of isomorphism classes of objects in $\rmod{\cat{C}}$. 
By \cref{lem:lemma2} and \cref{lem:lemma4}, the assignment 
sending the isomorphism class of 
$\Im\yoneda c$ to $[A]^{\sp} - [B]^{\sp} + [C]^{\sp} \in K^{\sp}_{0}(\cat{C})$ 
induces a well-defined set-function 
$\Iso(\rmod{\cat{C}}) \to K^{\sp}_{0}(\cat{C})$. 
\cref{lem:lemma3} ensures this induces 
the homomorphism $\theta_{\cat{C}}$ as claimed.
\end{proof}

\subsection{Defining \texorpdfstring{$\theta_{\cat{X}}$}{thetaX}}

The following theorem (which is \cref{thm:F}) is a strict generalisation of \cite[Thm.~A]{JorgensenShah-index}, removing the restriction that $\cat{X}$ is rigid in the triangulated category $\cat{C}$. 
The relative extriangulated category $(\cat{C},\BE_{\cat{X}},\fs_{\cat{X}})$ was defined in \cref{sec:relative-theory} (see also \cref{sec:triangulated-is-extriangulated}). 
Recall from \cref{def:A} that, for an object $C\in\cat{C}$, the class $[C]_{\cat{X}}\in K_{0}(\cat{C},\BE_{\cat{X}},\fs_{\cat{X}})$ is the index with respect to $\cat{X}$ of $C$, denoted by $\indxx{ \cat{ X }}(C)$.

\begin{thm}[=\cref{thm:F}]
\label{thm:theorem9}
There is a group homomorphism 
$\theta_{\cat{X}}\colon K_{0}(\rmod{\cat{X}})\to K_{0}(\cat{C},\BE_{\cat{X}},\fs_{\cat{X}})
$
such that if \eqref{eqn:triangle2} is a triangle in $\cat{C}$, then 
$$\theta_{\cat{X}}([\Im\yoneda _{\cat{X}}c])
    = \indxx{ \cat{ X }}(A) - \indxx{ \cat{ X }}(B) + \indxx{ \cat{ X }}(C).$$
\end{thm}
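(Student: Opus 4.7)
The plan is to combine \cref{thm:theorem5} (the ``absolute'' additivity formula) with \cref{prop:restriction-right-exact-sequence-of-K0} (the right-exact sequence relating $K_{0}(\rmod{\cat{C}})$ and $K_{0}(\rmod{\cat{X}})$). Concretely, I would first form the composition
\[
    \pi_{\cat{X}}\circ\theta_{\cat{C}} \colon K_{0}(\rmod{\cat{C}}) \xrightarrow{\theta_{\cat{C}}} K_{0}^{\sp}(\cat{C}) \xrightarrow{\pi_{\cat{X}}} K_{0}(\cat{C},\BE_{\cat{X}},\fs_{\cat{X}}),
\]
with $\pi_{\cat{X}}$ from \eqref{eqn:all-the-pi}, and then show this factors uniquely through the surjection $K_{0}(\restr{(-)}{\cat{X}}) \colon K_{0}(\rmod{\cat{C}}) \twoheadrightarrow K_{0}(\rmod{\cat{X}})$. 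The induced map will be $\theta_{\cat{X}}$.

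By \cref{prop:restriction-right-exact-sequence-of-K0}, this factorisation exists precisely when $\pi_{\cat{X}}\theta_{\cat{C}}$ vanishes on every generator of the form $[\fun{M}]$ with $\fun{M}\in\rmod{\cat{C}}$ and $\restr{\fun{M}}{\cat{X}}=0$. So the heart of the proof is this single check. Given such an $\fun{M}$, \cref{lem:lemma2} provides a triangle $A \to B \to C \xrightarrow{c} \sus A$ with $\fun{M}\cong\Im\yoneda c$, and then \cref{thm:theorem5} gives $\theta_{\cat{C}}([\fun{M}]) = [A]^{\sp} - [B]^{\sp} + [C]^{\sp}$. The hypothesis $\restr{\Im\yoneda c}{\cat{X}} = 0$ says exactly that $\yoneda_{\cat{X}}c = \restr{\cat{C}(-,c)}{\cat{X}} = 0$, which by \eqref{eqn:relative-yoneda} means precisely that $c\in\BE_{\cat{X}}(C,A)$. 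Hence the triangle is an $\fs_{\cat{X}}$-triangle, and the defining relations of $K_{0}(\cat{C},\BE_{\cat{X}},\fs_{\cat{X}})$ force $[A]_{\cat{X}} - [B]_{\cat{X}} + [C]_{\cat{X}} = 0$, i.e.\ $\pi_{\cat{X}}\theta_{\cat{C}}([\fun{M}]) = 0$.

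Once $\theta_{\cat{X}}$ is defined in this way, the additivity formula drops out. For an arbitrary triangle $A \to B \to C \xrightarrow{c} \sus A$ in $\cat{C}$, exactness of $\restr{(-)}{\cat{X}}$ (\cref{lemma_restriction_finpres}) gives $[\Im\yoneda_{\cat{X}}c] = [\restr{\Im\yoneda c}{\cat{X}}] = K_{0}(\restr{(-)}{\cat{X}})([\Im\yoneda c])$, so by construction of $\theta_{\cat{X}}$ and \cref{thm:theorem5},
\[
    \theta_{\cat{X}}([\Im\yoneda_{\cat{X}}c])
    = \pi_{\cat{X}}\theta_{\cat{C}}([\Im\yoneda c])
    = \pi_{\cat{X}}\bigl([A]^{\sp} - [B]^{\sp} + [C]^{\sp}\bigr)
    = \indxx{\cat{X}}(A) - \indxx{\cat{X}}(B) + \indxx{\cat{X}}(C).
\]

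I do not anticipate any real obstacle, since \cref{thm:theorem5} and \cref{prop:restriction-right-exact-sequence-of-K0} have already done the technical work. The conceptual point that makes the pieces fit is \eqref{eqn:relative-yoneda}: the ``vanishing on $\cat{X}$'' condition for a connecting morphism $c$ (membership in $\BE_{\cat{X}}$) coincides with the ``vanishing on $\cat{X}$'' condition for the associated finitely presented functor $\Im\yoneda c$, so the kernel of restriction on $K_{0}(\rmod{\cat{C}})$ maps into the kernel of $\pi_{\cat{X}}$ automatically.
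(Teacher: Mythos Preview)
Your proposal is correct and follows essentially the same route as the paper's own proof: compose $\pi_{\cat{X}}\circ\theta_{\cat{C}}$, use \cref{prop:restriction-right-exact-sequence-of-K0} to reduce the factorisation to checking vanishing on $[\fun{M}]$ with $\restr{\fun{M}}{\cat{X}}=0$, then invoke \cref{lem:lemma2} and \eqref{eqn:relative-yoneda} to recognise the associated triangle as an $\fs_{\cat{X}}$-triangle. The only cosmetic difference is that the paper makes the use of exactness of $\restr{(-)}{\cat{X}}$ explicit when identifying $\restr{(\Im\yoneda c)}{\cat{X}}$ with $\Im(\yoneda_{\cat{X}}c)$, which you use implicitly.
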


\begin{proof}
Recall the homomorphism $\theta_{\cat{C}}\colon K_{0}(\rmod{\cat{C}})\to K^{\sp}_{0}(\cat{C})$ from \cref{thm:theorem5}
and 
the canonical surjection 
$\pi_{\cat{X}}\colon K^{\sp}_{0}(\cat{C}) \to K_{0}(\cat{C},\BE_{\cat{X}},\fs_{\cat{X}})$ from \eqref{eqn:all-the-pi}. 
In light of \cref{prop:restriction-right-exact-sequence-of-K0}, it suffices to show that the homomorphism $\pi_{\cat{X}}\theta_{\cat{C}}$ vanishes on 
$\Im K_{0}(\iota)$ to produce a homomorphism $\theta_{\cat{X}}\colon K_{0}(\rmod{\cat{X}})\to K_{0}(\cat{C},\BE_{\cat{X}},\fs_{\cat{X}})
$ 
making the following diagram commute.
\begin{equation}\label{eqn:9a}
\begin{tikzcd}[column sep=1.5cm]
K_{0}(\Ker \restr{(-)}{\cat{X}} )
    \arrow{r}{K_{0}(\iota)}
& K_{0}(\rmod{\cat{C}})
    \arrow{r}{K_0(\restr{(-)}{\cat{X}})}
    \arrow{d}[swap]{\theta_{\cat{C}}}
& K_{0}(\rmod{\cat{X}})
    \arrow{r}{}
    \arrow[dotted]{ddl}{\theta_{\cat{X}}}
& 0
\\
{}
&K^{\sp}_{0}(\cat{C})
    \arrow{d}[swap]{\pi_{\cat{X}}}
&{}
&{}
\\
{}
& K_{0}(\cat{C},\BE_{\cat{X}},\fs_{\cat{X}})
&{}
&{}
\end{tikzcd}
\end{equation}

\cref{prop:restriction-right-exact-sequence-of-K0} also implies it is enough to show that $\pi_{\cat{X}}\theta_{\cat{C}}K_{0}(\iota) ([\fun{M}])=0$ for $\fun{M}\in\rmod{\cat{C}}$ with $\restr{\fun{M}}{\cat{X}} = 0$.
Given such an $\fun{M}$, by \cref{lem:lemma2} we can pick a triangle 
$\begin{tikzcd}[column sep=0.4cm,cramped]
    A \arrow{r}{a}& B\arrow{r}{b} & C \arrow{r}{c}& \sus A
\end{tikzcd}$ 
with $\fun{M}\cong \Im\yoneda c$. 
Note that we have 
\[
\Im (\yoneda _{\cat{X}}c)
    = \Im (\restr{(\yoneda c)}{\cat{X}})
    = \restr{(\Im\yoneda c)}{\cat{X}} 
    \cong  \restr{\fun{M}}{\cat{X}}
    = 0,
\]
using the exactness of $\restr{(-)}{\cat{X}}$ (see \cref{lemma_restriction_finpres}). 
So, from \eqref{eqn:relative-yoneda} we see that 
$\begin{tikzcd}[column sep=0.5cm,cramped]
    A \arrow{r}{a}& B\arrow{r}{b} & C \arrow[dashed]{r}{c}& {}
\end{tikzcd}$
is in fact an $\fs_{\cat{X}}$-triangle. 
In particular, this implies 
\begin{equation}\label{eqn:9b}
    [A]_{\cat{X}} - [B]_{\cat{X}} + [C]_{\cat{X}} = 0
\end{equation}
in $K_{0}(\cat{C},\BE_{\cat{X}},\fs_{\cat{X}})$. 
Moreover, we have 
\begin{align*}
\pi_{\cat{X}}\theta_{\cat{C}}K_{0}(\iota) ([\fun{M}])
    &= \pi_{\cat{X}}\theta_{\cat{C}} ([\Im\yoneda c])\\
    &= \pi_{\cat{X}}([A]^{\sp} - [B]^{\sp} + [C]^{\sp}) && \text{by \cref{thm:theorem5}}\\
    &= [A]_{\cat{X}} - [B]_{\cat{X}} + [C]_{\cat{X}}\\
    &= 0 &&\text{by \eqref{eqn:9b}}.
\end{align*}

Lastly, checking that 
$$\theta_{\cat{X}}([\Im\yoneda _{\cat{X}}c])
    = [A]_{\cat{X}} - [B]_{\cat{X}} + [C]_{\cat{X}} = \indxx{ \cat{ X }}(A) - \indxx{ \cat{ X }}(B) + \indxx{ \cat{ X }}(C)$$
for an arbitrary triangle 
\eqref{eqn:triangle2} in $\cat{C}$ is straightforward using the commutativity of \eqref{eqn:9a} and the exactness of $\restr{(-)}{\cat{X}}$. 
\end{proof}


\begin{acknowledgements}
We would like to thank 
\.{I}lke \c{C}anak\c{c}\i, 
Sofia Franchini, 
Matthew Pressland 
and 
Emine Y{\i}ld{\i}r{\i}m 
for helpful discussions. 
We are also grateful to an anonymous referee for their comments on a previous version.

FF is supported by the EPSRC Programme Grant EP/W007509/1. PJ and AS are supported by a DNRF Chair from the Danish National Research Foundation (grant DNRF156), by a Research Project 2 from the Independent Research Fund Denmark, and by the Aarhus University Research Foundation (grant AUFF-F-2020-7-16).
\end{acknowledgements}


\end{document}